\numberwithin{equation}{section}
\newtheorem{theorem}{Theorem}[section]
\newtheorem{proposition}[theorem]{Proposition}
\newtheorem{corollary}[theorem]{Corollary}
\newtheorem{lemma}[theorem]{Lemma}
\theoremstyle{definition}
\newtheorem{definition}[theorem]{Definition}
\newtheorem{remark}[theorem]{Remark}
\newcommand{\R}{\mathbb{R}}
\begin{document}
\title
 [Normalized solutions for a fourth-order Schr\"{o}dinger equation]
 {Normalized solutions for a fourth-order Schr\"{o}dinger equation with positive second-order dispersion coefficient} {\let\thefootnote\relax \footnotetext{This work was supported by National Natural Science Foundation of China (Grant Nos. 11901147, 11771166) and the excellent doctorial dissertation cultivation from Central China Normal University (Grant No. 2019YBZZ064).}}

\maketitle
\begin{center}
\author{Xiao Luo}
\footnote{Email addresses: luoxiaohf@163.com (X. Luo).}

\author{Tao Yang}
\footnote{Email addresses: yangtao\_pde@163.com (T. Yang).}
\end{center}

\begin{center}
\address {1 School of Mathematics, Hefei University of Technology, Hefei, 230009, P. R. China}

\address {2 School of Mathematics and Statistics, Central China Normal University, Wuhan, 430079, P. R. China}
\end{center}
\maketitle

\begin{abstract}
We are concerned with the existence and asymptotic properties of solutions to the following fourth-order Schr\"{o}dinger equation
\begin{equation}\label{1}
{\Delta}^{2}u+\mu \Delta u-{\lambda}u={|u|}^{p-2}u, ~~~~x \in \R^{N}\\
\end{equation}
under the normalized constraint
 $$\int_{{\mathbb{R}^N}} {{u}^2}=a^2,$$
where $N\!\geq\!2$, $a,\mu\!>\!0$, $2+\frac{8}{N}\!<\!p\!<\! 4^{*}\!=\!\frac{2N}{(N-4)^{+}}$ and $\lambda\in\R$ appears as a Lagrange multiplier. Since the second-order dispersion term affects the structure of the corresponding energy functional
$$
E_{\mu}(u)=\frac{1}{2}{||\Delta u||}_2^2-\frac{\mu}{2}{||\nabla u||}_2^2-\frac{1}{p}{||u||}_p^p
$$
we could find at least two normalized solutions to (\ref{1}) if $2\!+\!\frac{8}{N}\!<\! p\!<\!{ 4^{*} }$ and $\mu^{p\gamma_p-2}a^{p-2}\!<\!C$ for some explicit constant $C\!=\!C(N,p)\!>\!0$ and $\gamma_p\!=\!\frac{N(p\!-\!2)}{4p}$.
Furthermore, we give some asymptotic properties of the normalized solutions to (\ref{1}) as $\mu\to0^+$ and $a\to0^+$, respectively. In conclusion, we mainly extend the results in \cite{DBon,dbJB}, which deal with (\ref{1}), from $\mu\leq0$ to the case of $\mu>0$, and also extend the results in \cite{TJLu,Nbal}, which deal with (\ref{1}), from $L^2$-subcritical and $L^2$-critical setting to $L^2$-supercritical setting.

{\bf Key words }: Fourth-order Schr\"{o}dinger equation; Normalized solutions; Variational methods.

{\bf 2010 Mathematics Subject Classification }: 35A01, 35B33, 35B40, 35J35, 35J91.

\end{abstract}

\maketitle

\section{Introduction and Main Result}

\setcounter{equation}{0}

This paper concerns the existence of solutions $({\lambda},u)\in  {\mathbb{R}}\times H^2({\mathbb{R}^N})$ to the following fourth-order Schr\"{o}dinger equation
\begin{equation}\label{eq1.1}
{\Delta}^{2}u+\mu \Delta u-{\lambda}u={|u|}^{p-2}u, ~~~~x \in \R^{N}
\end{equation}
under the constraint
\begin{equation}\label{eq1.2}
  \int_{{\mathbb{R}^N}} {{u}^2}=a^2,
\end{equation}
where $N\geq 2$, $a>0$, $\mu>0$, $2+\frac{8}{N}< p<{ 4^{*} }$. Here
$${ 4^{*} }=+\infty~~\mbox{if}~~N\leq4,
~~\mbox{and}~~{ 4^{*} }=\frac{2N}{N-4}~~\mbox{if}~~N\geq5.$$
We call $u$ a normalized solution to (\ref{eq1.1}), since (\ref{eq1.2}) imposes a normalization on its $L^2$-mass. Normalized solutions to (\ref{eq1.1}) can be obtained by searching critical points of
$$
E_{\mu}(u)=\frac{1}{2}{||\Delta u||}_2^2-\frac{\mu}{2}{||\nabla u||}_2^2-\frac{1}{p}{||u||}_p^p
$$
on the constraint
\begin{equation} \label{eq1.06}
S_{a}: =\Big\{ u \in H^2({\mathbb{R}^N}): {||u||}_2^2=\int_{{\mathbb{R}^N}} {u}^2=a^2  \Big\}
\end{equation}
with $\lambda$ appearing as Lagrange multipliers. This fact implies that $\lambda$ cannot be determined a priori, but is part of the unknown.

In \cite{DCas}, D. Bonheure et al. studied the fourth order nonlinear Helmholtz equation
\begin{equation} \label{eQA1.06}
\Delta^{2} u+\mu\Delta u-\lambda u=\Gamma|u|^{p-2} u ~~~~\mbox{in}~~~~ \mathbb{R}^{N}
\end{equation}
for $2< p<{ 4^{*} }$ and positive, bounded and $\mathbb{Z}^N$-periodic functions $\Gamma$ in the following three cases: (1) $\lambda>0$, $\mu\in \mathbb{R} $; (2) $\lambda<0$, $\mu>2\sqrt{-\lambda}$; (3) $\lambda=0$, $\mu>0$. Using the dual method introduced by G. Evequoz et al. in \cite{QuOz}, they found solutions to \eqref{eQA1.06} and established some of their qualitative properties. Since $\lambda$ is prescribed in \eqref{eQA1.06}, problem (\ref{eq1.1})-(\ref{eq1.2}) is different from that of \cite{DCas}.

Problem (\ref{eq1.1})-(\ref{eq1.2}) arises from seeking standing waves for the time-dependent fourth-order Schr\"{o}dinger equation
\begin{equation}\label{eq1.3}
i\partial_t{\psi}-{\Delta}^{2}{\psi}-\mu \Delta \psi+ {|\psi|}^{p-2}{\psi}=0, ~~~~\psi(0,x)=\psi_0(x),~~~~(t,x)\in {\mathbb{R}}\times {\mathbb{R}^N}.
\end{equation}
A standing wave of (\ref{eq1.3}) is a solution having the form ${\psi}(t,x)=e^{-i {\lambda} t}u(x)$ for some ${\lambda}\in \mathbb{R}$ and $u$ solving (\ref{eq1.1}). So (\ref{eq1.1}) is the stationary equation of (\ref{eq1.3}).

Dating back to \cite{VIKA}, V. I. Karpman introduced the following equation
\begin{equation}\label{eq1.4}
i\partial_t{\psi}-\gamma {\Delta}^{2}{\psi}-\mu \Delta \psi+ {|\psi|}^{p-2}{\psi}=0, ~~~~\psi(0,x)=\psi_0(x),~~~~(t,x)\in {\mathbb{R}}\times {\mathbb{R}^N}
\end{equation}
with the corresponding stationary equation
\begin{equation}\label{eq1.401}
\gamma {\Delta}^{2}u+\mu \Delta u-{\lambda}u={|u|}^{p-2}u, ~~~~x \in \R^{N},
\end{equation}
where $\gamma>0$ and $\mu=-1$. It is well known that two quantities are conserved in time along trajectories of (\ref{eq1.4}): the energy
\begin{align*}
E_{\mu,\gamma}(u)=\frac{\gamma}{2}{||\Delta u||}_2^2-\frac{\mu}{2}{||\nabla u||}_2^2-\frac{1}{p}{||u||}_p^p
\end{align*}
and the mass $\int_{{\mathbb{R}^N}} {{{|u}}|^2}$. In particular, taking $\gamma\!=\!0$ and $\mu\!=\!-1$ in (\ref{eq1.4}), we recover the nonlinear Schr\"{o}dinger equation
\begin{equation}\label{eq1.05}
i\partial_t{\psi}+\Delta \psi+ {|\psi|}^{p-2}{\psi}=0, ~~~~\psi(0,x)=\psi_0(x),~~~~(t,x)\in {\mathbb{R}}\times {\mathbb{R}^N}.
\end{equation}
In nonlinear optics, equation (\ref{eq1.05}) (with $p=4$, $N=2$) is derived from the scalar nonlinear Helmhotz equation through the paraxial approximation. Physicists use (\ref{eq1.05}) to describe the canonical model for propagation of intense laser beams in a bulk medium with Kerr nonlinearity, see \cite{DBon, GfbI}. It is well-known that when $2<p<\frac{4}{N}+2$, solutions to (\ref{eq1.05}) exist globally in time and that they are stable whereas when if $\frac{4}{N}+2 \leq p<\frac{2N}{N-2}$, they can become singular in finite time and they are
unstable, see \cite{NaVe,CSul,TcAz,MiWe}. In fact, if $\frac{4}{N}+2< p<\frac{2N}{N-2}$, the ground states (intended here as the least energy solutions associated with the free functional, where the $\lambda$ is fixed) are unstable, but the problem of stability/instability is still open for the exited states (namely for the solutions which are not ground states).

In order to regularize and stabilize the solutions to (\ref{eq1.05}), V. I. Karpman \cite{VIKA} added a fourth-order dispersion term to (\ref{eq1.05}) and studied (\ref{eq1.4}) with $\gamma>0$ and $\mu=-1$. It is known that the Cauchy problem (\ref{eq1.4}) is locally well-posed in $H^2(\R^N)$ provided $2<p<4^*$(See \cite{Bpau, MBHK, Cken}). One can refer to \cite{Bpau, BPau, BPAu, BpSx, MBHK, Qguo, CmGl} for well-posedness and scattering, and \cite{TbOE} concerning finite-time blow up. The one-dimensional stationary mixed dispersion nonlinear Schr\"{o}dinger equation also arises in the theory of water waves(See \cite{BBUf, BbUf}). From \cite{VIKA, ViKv, EWlK, ewLa}, we concluded that: when $2<p<\frac{4}{N}+2$ for $\gamma>0$, or $\frac{4}{N}+2\leq p<\frac{8}{N}+2$ for $\gamma>0$ small enough, standing waves of (\ref{eq1.4}) are stable, and when $p>\frac{8}{N}+2$ for $\gamma>0$, they become unstable. In \cite{GfbI}, G. Fibich et al. proved that when $2<p<\frac{8}{N}+2$, all solutions to (\ref{eq1.4}) exist globally in time. On the other hand, they mentioned that existence of blowing up solutions to (\ref{eq1.4}) for $p\geq\frac{8}{N}+2$ is a difficult open problem, which has now been recently partially solved by  T. Boulenger et al. in \cite{TbOE}.  When $p>\frac{8}{N}+2$, T. Boulenger et al. proved a general result on finite-time blow up for radial data in any dimension $N\geq2$. Moreover, they derived a universal upper bound for the blow up rate for suitable $\frac{8}{N}+2<p<4^*$. For $p=\frac{8}{N}+2$, they proved a general blow up result in finite or infinite time for radial data. Later on, D. Bonheure et al. in \cite{DboN} proved that radial least energy solutions are unstable and thus complemented the results of \cite{TbOE}.

Recently, more and more attention are paid to the existence of normalized solutions to (\ref{eq1.401}), especially for ground states, see \cite{DBon, TJLu,Nbal,dbJB}. Following Definition 1.1 in \cite{iniL}, we say that $u$ is a \textbf{ground state} of (\ref{eq1.401}) on $S_a$ if it is a solution to (\ref{eq1.401}) having minimal energy among all solutions which belong to $S_a$, that is
$$
d\left.E_{\mu,\gamma}\right|_{S_{a}}(u)=0 \quad \text { and } \quad E_{\mu,\gamma}(u)=\inf \left\{E_{\mu,\gamma}(w): d\left.E_{\mu,\gamma}\right|_{S_{a}}(w)=0 \text { and }  w\in S_{a}\right\}.
$$
Furthermore, as in \cite{dbJB}, we define $u$ a \textbf{radial ground state} of (\ref{eq1.401}) on $S_{a,r}\!=\!S_a \cap H_{rad}^2$ provided
$$
d\left.E_{\mu,\gamma}\right|_{S_{a}}(u)=0 \quad \text { and } \quad E_{\mu,\gamma}(u)=\inf \left\{E_{\mu,\gamma}(w): d\left.E_{\mu,\gamma}\right|_{S_{a,r}}(w)=0 \text { and }  w\in S_{a,r}\right\}.
$$

In \cite{DBon}, D. Bonheure et al. studied ground state of (\ref{eq1.401}) on $S_a$ with $\gamma>0$, $\mu\leq0$ and $2<p<\frac{8}{N}+2$ by utilising the constrained minimization method since $\inf _{u \in S_{a}} E_{\mu,\gamma}(u)\!>\!-\infty$. They focused on the existence results, qualitative properties, exponential decay and orbital stability. Fruitful achievements have been made in their work.

In \cite{TJLu}, T. Luo et al. considered (\ref{eq1.401}) with $\gamma=1$, $\mu \in \R$ and $2<p\leq\frac{8}{N}+2$. They studied the minimization problem
\begin{equation}\label{qno.8}
m(a,\mu,\gamma):=\inf _{u \in S_{a}} E_{\mu,\gamma}(u),
\end{equation}
by using the profile decomposition of bounded sequences in $H^2$ established in \cite{ShZj}. T. Luo et al. showed that $m(a,\mu,\gamma)$ is achieved in five cases: (1)~~$a=1$, $2<p<2+\frac{4}{N}$ and $\mu \in(-\infty,0)$; (2)~~$a=1$, $2+\frac{4}{N} \leq p<2+\frac{8}{N}$ and $\mu \in\left(-\mu_{0}, 0\right)$ for some $\mu_{0}>0$; (3)~~$a=1$, $2<p<2+\frac{8}{N}$ and $\mu=0$; (4)~~$a=1$, $2<p<2+\frac{8}{N}$ and $\mu \in\left(0, \hat{\mu}_{0} \right]$ for some $\hat{\mu}_{0}>0$; (5)~~$0\!<\!a\!<\!a_N$, $p\!=\!2+\frac{8}{N}$ and $\mu \in\left(0, \tilde{{\mu}}_{0} \right]$ for some $a_N, \tilde{{\mu}}_{0}>0$.

In \cite{Nbal}, N. Boussaid et al. studied \eqref{qno.8} with $\gamma\!>\!0$, $\mu\!>\!0$ and $2\!<\!p\!\leq \!\frac{8}{N}\!+\!2$. Under this situation, they improved the results of \cite{TJLu} by relaxing the extra restriction on $a$ and $\mu$. In \cite{TJLu}, there is an explicit lower bound on $a\!>\!0$ and an upper bound on $\mu\!>\!0$. N. Boussaid et al. solved \eqref{qno.8} for all $a>0$ and $\mu>0$ when $2\!<\!p\!\leq \!\frac{8}{N}\!+\!2$. The problem is especially difficult for $a>0$ small. The key point is to rule out the vanishing of the minimizing sequences since it also exclude the possibility of dichotomy. They give a necessary and sufficient condition to avoid the vanishing, i.e. $m(a,\mu,\gamma)<-\frac{a^2{\mu}^2}{8\gamma}$.

D. Bonheure et al. in \cite{dbJB} considered (\ref{eq1.401}) with $\gamma>0$, $\mu=-1$ and $\frac{8}{N}+2\leq p <4^*$. In this case, it is no more possible to obtain a critical point of $E_{\mu,\gamma}$ restricted to $S_a$ as a global minimizer. Fortunately, D. Bonheure et al. in \cite{dbJB} discovered that $E_{\mu,\gamma}$ restricted to $S_a$ possesses a natural constraint, namely a set, that contains all the critical points of $E_{\mu,\gamma}$ restricted to $S_a$. Therefore, they concerned the existence of minimizers associated to $\Gamma(a) :=\inf _{u \in \mathcal{M}(a)} E_{\mu,\gamma}(u)$
where
$$\mathcal{M}(a) :=\Big \{ u \in H^2({\mathbb{R}^N}): {||u||}_2^2=a^2,~~~~ P_{\mu,\gamma}(u)=0 \Big \}$$
and
$$P_{\mu,\gamma}(u):=2\gamma{||\Delta u||}_2^2-\mu{||\nabla u||}_2^2-2{\gamma_p}{||u||}_p^p=0$$
is the related Pohozaev identity with $\gamma_p=\frac{N(p-2)}{4p}$. They proved that $\Gamma(a) :=\inf _{u \in \mathcal{M}(a)} E_{\mu,\gamma}(u)$ is attained provided $a_0<a<a_{N,p}$ for some $a_0 $ and $a_{N,p}$ satisfing $a_{N,p}>a_0>0$ and the minimizers are ground states of (\ref{eq1.401}) on $S_a$. In addition, they proved the existence of infinitely many radial normalized solutions to (\ref{eq1.401}).

As in \cite{DBon,TJLu,Nbal,dbJB}, by the $L^2$-norm preserving dilations $u_{t}(x)=t^{\frac{N}{2}}u(tx)$ with $t\!>\!0$, it is easy to know that $\bar{p}:=\!\frac{8}{N}\!+\!2$ is the $L^2$-critical exponent of (\ref{eq1.401}) since $\inf _{u \in S_{a}} E_{\mu,\gamma}(u)\!=\!-\infty$ if $\bar{p}<p\leq4^*$ and $\inf _{u \in S_{a}} E_{\mu,\gamma}(u)\!>\!-\infty$ if $2\!<\!p<\!\bar{p}$. To our best knowledge, the existing results on normalized solutions to (\ref{eq1.401}) can be summarized in the following Table (with additional conditions on the parameters):
$$
\begin{array}   {|c|c|c|c|c|}
\hline \text { $\gamma$ } & {\text { $\mu$ }} & {\text {$p$}} & {\text { Number and type of solutions }} & {\text { References }} \\
\hline \text { $\gamma>0$ } & {\text { $\mu\leq 0$ }} & {\text {$2<p<\bar{p}$}} & {\text { A ground state  }} & {\text { \cite{DBon} }} \\
\hline \text { $\gamma=1$ } & {\text { $\mu\!\leq\!0, 0\!<\!\mu\!<\!\mu_0$ }} & {\text {$2<p\leq \bar{p}$}} & {\text { A ground state  }} & {\text { \cite{TJLu} }} \\
\hline \text { $\gamma>0$ } & {\text { $\mu=-1$ }} & {\text {$\bar{p}\leq p<4^*$}} & \makecell[c]{\text{A ground state};\\ \text{infinitely radial solutions}} & \text{\cite{dbJB}} \\
\hline \text { $\gamma>0$ } & {\text { $\mu>0$ }} & {\text {$2<p\leq\bar{p}$}} & {\text{ A ground state }} &  {\textbf{ \cite{Nbal} }}\\
\hline \text { $\gamma>0$ } & {\text { $\mu>0$ }} & {\text {$\bar{p}<p\leq4^*$}} & {\textbf{ Unknown }} &  {\textbf{ Unknown }}\\
\hline
\end{array}
$$

In this paper, we consider the existence and asymptotic properties of normalized solutions to (\ref{eq1.401}) with $\gamma>0$, $\mu>0$ and $\bar{p}<p<4^*$.
For the sake of convenience, we take $\gamma\!=\!1$ since the coefficient $\gamma\!>\!0$ in (\ref{eq1.401}) can be scaled out. In fact, setting $v(x)\!=\!u(\gamma^{\frac{1}{4}} x)$ with $u$ solving (\ref{eq1.401}), then $v$ solves (\ref{eq1.1}) with $\mu$ replaced by $\frac{\mu}{\sqrt{\gamma}}$.\\

To state our main results, let us recall the Gagliardo-Nirenberg inequality, for $2<p<4^{*}$
\begin{equation} \label{2.9}
  {||u||}_p^{p} \leq C_{N,p}^p {||u||}_2^{p(1-\gamma_p)} {||\Delta u||}_2^{p\gamma_p},\quad  \forall u\in H^{2}(\R^N)
\end{equation}
where $C_{N,p}$ is some positive constant and $\gamma_p=\frac{N(p-2)}{4p}$. One can refer to \cite{TbOE,dbJB,TJLu,Nbal} for more information about \eqref{2.9}. For $\bar{p}<p<4^{*}$, we introduce three positive constants:
\begin{equation*} 
\tilde{C}(N,p):=\!\frac{p}{2(p\gamma_p\!-\!1)C_{N,p}^p}{\Big( \frac{p\gamma_p\!-\!2}{p\gamma_p\!-\!1} \Big)}^{p\gamma_p\!-\!2}
\end{equation*}
and
\begin{equation*}
{C}^{*}(N,p):=\!\frac{2^{p\gamma_p\!-\!2}p}
{2(p\gamma_p\!-\!1)C_{N,p}^p}\Big(\frac{1\!-\!\gamma_p}{\gamma_p} \Big) ^{\frac{p\gamma_p\!-\!2}{2}},~~~~~~~~{C}_{*}(N,p):=\frac{p}
{2(p\gamma_p\!-\!1)C_{N,p}^p}\Big[ \frac{2(p\!-\!2)}{p\gamma_p\!-\!1} \Big] ^{\frac{p\gamma_p\!-\!2}{2}}.
\end{equation*}

Our main results are as follows.

\begin{theorem}\label{th1.1}
Let $N\!\geq\!5$, $\overline{p}\!<\!p\!<\!\min\{4,4^*\}$ and $a,\mu>0$ such that
${\mu}^{p\gamma_p-2}{a}^{p-2}<\min \{\tilde{C}(N,p),{C}^{*}(N,p), {C}_{*}(N,p)\}$. Then \\
$\textbf{(1)}$ $E_{\mu}|_{S_{a}}$ has a critical point $\tilde{u}_{\mu}$ at level $m_r(a, \mu)<-\frac{a^2{\mu}^2}{8}$, which is an interior local minimizer of $E_{\mu}$ on the set
$$
A_{R_0}^r :=\left\{u \in S_{a}\cap H_{{rad}}^{2} : {||\Delta u||}_2<R_0\right\}
$$
for a suitable $R_0=R_0(a,\mu)>0$. Moreover, $\tilde{u}_{\mu}$ is a radial ground state of (\ref{eq1.1}) on $S_{a,r}$, and any other radial ground state is a local minimizer of $E_{\mu}$ on $A^r_{R_0}$. \\
$\textbf{(2)}$ $E_{\mu}|_{S_{a}}$ has a second critical point of mountain pass type $\hat{u}_{\mu}$ at a positive level $\sigma(a, \mu)\!>\!0$.\\
$\textbf{(3)}$ Both $\tilde{u}_{\mu}$ and $\hat{u}_{\mu}$ are real-valued radial solutions to (\ref{eq1.1}) for suitable $\tilde{\lambda}, \hat{\lambda}<-\frac{{\mu}^2}{4}$;
Suppose in addition that $N\!<\!8$ and $p\!<\!\min \{\frac{2(N-2)}{N-4},4\}$, then $\tilde{u}_{\mu}$ and $\hat{u}_{\mu}$ are sign-changing.\\
$\textbf{(4)}$ $m_r(a, \mu)\!\to\! 0^-$, $\tilde{\lambda}\to 0^-$ and any radial ground state $\tilde{u}_{\mu} \in S_{a,r}$ for $\left.E_{\mu}\right|_{S_{a}}$ satisfies $||\Delta \tilde{u}_{\mu}||_{2} \rightarrow 0$ as $\mu \!\to\! 0^{+}$.\\
$\textbf{(5)}$ $\sigma(a, \mu)\!\to\!\sigma(a, 0)$ and $\hat{u}_{\mu}\!\to\!\hat{u}$ in $H^2$ as $\mu \!\to\! 0^{+}$, where $\sigma(a, 0)\!=\!E_{0}(\hat{u})$ with $\hat{u}$ being a radial ground state to (\ref{eq1.1}) obtained for $\mu=0$.
\end{theorem}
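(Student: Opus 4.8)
The plan is to run a local-minimization/mountain-pass scheme on the radial sphere $S_{a}\cap H^{2}_{rad}$, driven by the geometry of a scalar lower bound for $E_{\mu}$. Combining the Gagliardo-Nirenberg inequality \eqref{2.9} with the interpolation bound $\|\nabla u\|_{2}^{2}\le a\|\Delta u\|_{2}$ (valid on $S_{a}$ by Cauchy-Schwarz), one obtains $E_{\mu}(u)\ge g(\|\Delta u\|_{2})$, where $g(t)=\tfrac12 t^{2}-\tfrac{\mu a}{2}t-\tfrac{C_{N,p}^{p}a^{p(1-\gamma_p)}}{p}t^{p\gamma_p}$. Since $\overline{p}<p$ forces $p\gamma_p=\tfrac{N(p-2)}{4}>2$ while $g'(0^{+})=-\tfrac{\mu a}{2}<0$, the function $g$ first decreases; checking that $g'$ attains a strictly positive maximum — which is exactly the threshold encoded in $\mu^{p\gamma_p-2}a^{p-2}<\tilde{C}(N,p)$ — produces two positive zeros $t_{0}<t_{1}$ of $g'$, so that $g$ has a negative local minimum at $t_{0}$ and a local maximum at $t_{1}$ before decaying to $-\infty$. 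This is precisely the local-minimum/mountain-pass geometry, and the remaining constants $C^{*},C_{*}$ are tuned so as to guarantee the sharper level inequalities used below; I fix $R_{0}\in(t_{0},t_{1})$ and define $A^{r}_{R_{0}}$ accordingly.

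For (1), I would minimize $E_{\mu}$ over $A^{r}_{R_{0}}=\{u\in S_{a}\cap H^{2}_{rad}:\|\Delta u\|_{2}<R_{0}\}$. The geometry yields $m_{r}(a,\mu):=\inf_{A^{r}_{R_{0}}}E_{\mu}<\inf_{\partial A^{r}_{R_{0}}}E_{\mu}$, so every minimizer is interior and hence a constrained critical point by the Lagrange multiplier rule. The decisive strict inequality $m_{r}(a,\mu)<-\tfrac{\mu^{2}a^{2}}{8}$ comes from a test function concentrated in Fourier near $|\xi|^{2}=\mu/2$, where the quadratic part $\tfrac12\|\Delta u\|_{2}^{2}-\tfrac{\mu}{2}\|\nabla u\|_{2}^{2}=\tfrac12\|(\Delta+\tfrac{\mu}{2})u\|_{2}^{2}-\tfrac{\mu^{2}a^{2}}{8}$ nearly attains its infimum $-\tfrac{\mu^{2}a^{2}}{8}$ while $\|u\|_{p}^{p}>0$; the constants $C^{*},C_{*}$ ensure such a competitor lies inside $A^{r}_{R_{0}}$. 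Compactness is the subtle point: a minimizing sequence is bounded in $H^{2}_{rad}$ and converges strongly in $L^{p}$ by the compact radial embedding $H^{2}_{rad}\hookrightarrow L^{p}$, but the wrong-sign term $-\tfrac{\mu}{2}\|\nabla u\|_{2}^{2}$ is not weakly lower semicontinuous, so I must upgrade to strong $H^{2}$ convergence. This is exactly where $m_{r}(a,\mu)<-\tfrac{\mu^{2}a^{2}}{8}$ (in the spirit of Boussaid et al.) rules out vanishing, forces a nonzero weak limit with $\|u\|_{2}=a$, and delivers the minimizer $\tilde{u}_{\mu}$; its minimality identifies it as a radial ground state, and any other radial ground state must sit at the same level, hence also be a local minimizer on $A^{r}_{R_{0}}$.

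For (2), the local maximum at $t_{1}$ furnishes a mountain-pass geometry on $S_{a}\cap H^{2}_{rad}$, with $\sigma(a,\mu)=\inf_{\eta\in\Gamma}\max_{t\in[0,1]}E_{\mu}(\eta(t))$ over paths $\eta$ joining the local-minimum region to a point of very negative energy; the constants $C^{*},C_{*}$ make this minimax level strictly positive and separated from $m_{r}$. The analytic heart is the Palais-Smale analysis: following Jeanjean's device I would work with the augmented functional on $\R\times(S_{a}\cap H^{2}_{rad})$ so that the Palais-Smale sequences asymptotically satisfy the Pohozaev identity $P_{\mu}(u)=2\|\Delta u\|_{2}^{2}-\mu\|\nabla u\|_{2}^{2}-2\gamma_p\|u\|_{p}^{p}=0$ (the derivative of $E_{\mu}(t^{N/2}u(t\,\cdot))$ at $t=1$), which makes them bounded in $H^{2}$; radial compactness then produces a nonzero weak limit, and one verifies that the Lagrange multiplier stays strictly below the bottom $-\mu^{2}/4$ of the essential spectrum of $\Delta^{2}+\mu\Delta$, ruling out loss of mass and upgrading to strong convergence. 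For (3), both critical points solve \eqref{eq1.1} by the multiplier rule; writing $\tilde{\lambda}=\lambda+\tfrac{\mu^{2}}{4}$, testing the equation against $u$ reduces $\lambda<-\tfrac{\mu^{2}}{4}$ to the single inequality $\|(\Delta+\tfrac{\mu}{2})u\|_{2}^{2}<\|u\|_{p}^{p}$, which is immediate for $\tilde{u}_{\mu}$ from $E_{\mu}(\tilde{u}_{\mu})<-\tfrac{\mu^{2}a^{2}}{8}$ and, for $\hat{u}_{\mu}$, follows by combining it with the Pohozaev identity $P_{\mu}(\hat{u}_{\mu})=0$ and the constants $C^{*},C_{*}$. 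Finally, the sign-changing property reflects the positivity of $\mu$: the symbol $|\xi|^{4}-\mu|\xi|^{2}-\lambda$ dips below its value at the origin, so the resolvent kernel of $\Delta^{2}+\mu\Delta-\lambda$ (well defined once $\lambda<-\mu^{2}/4$) oscillates in sign, and the representation $u=(\Delta^{2}+\mu\Delta-\lambda)^{-1}(|u|^{p-2}u)$ is incompatible with a fixed sign; the hypotheses $N<8$ and $p<\min\{\tfrac{2(N-2)}{N-4},4\}$ are the integrability and regularity conditions needed to make this argument rigorous.

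For (4), as $\mu\to0^{+}$ the threshold $-\tfrac{\mu^{2}a^{2}}{8}\to0$ squeezes $m_{r}(a,\mu)$ between $g(t_{0}(\mu))$ and $-\tfrac{\mu^{2}a^{2}}{8}$; since $t_{0}(\mu)\to0$ (indeed $t_{0}(\mu)\sim\tfrac{\mu a}{2}$ because the $t^{p\gamma_p}$ term is higher order), both bounds tend to $0^{-}$, so $m_{r}(a,\mu)\to0^{-}$, and the energy identity together with \eqref{2.9} then forces $\|\Delta\tilde{u}_{\mu}\|_{2}\to0$, whence $\tilde{\lambda}\to0^{-}$ from $\tilde{\lambda}a^{2}=\|(\Delta+\tfrac{\mu}{2})\tilde{u}_{\mu}\|_{2}^{2}-\|\tilde{u}_{\mu}\|_{p}^{p}$. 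For (5), I would prove $\sigma(a,\mu)\to\sigma(a,0)$ by upper semicontinuity along a fixed admissible path and a matching lower bound, and show $\hat{u}_{\mu}\to\hat{u}$ in $H^{2}$ using uniform-in-$\mu$ $H^{2}$ bounds (from the boundedness of the levels and the Pohozaev identity), the radial compactness, and the strong-convergence argument of (2), identifying $\hat{u}$ as a radial ground state of the $\mu=0$ problem. The main obstacle throughout is precisely this compactness for the mountain-pass solution in the $L^{2}$-supercritical regime with $\mu>0$: because the quadratic part of $E_{\mu}$ is bounded below on $S_{a}$ only by $-\tfrac{\mu^{2}a^{2}}{8}$, vanishing is a genuine threat, and reconciling the Pohozaev-constrained Palais-Smale sequences with the strict control $\lambda<-\mu^{2}/4$ of the multiplier is the delicate step on which all the existence claims rest.
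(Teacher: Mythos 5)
Your outline reproduces, in essentially the same order, the paper's own strategy: the scalar geometry of $h$, a local minimizer on $A^{r}_{R_{0}}$ plus a mountain-pass level for the augmented functional with the Pohozaev constraint $P_{\mu}$, the strict threshold $m_r(a,\mu)<-\frac{a^{2}\mu^{2}}{8}$ obtained from Bessel-type test functions concentrated in Fourier space, compactness through the Lagrange-multiplier bound $\lambda<-\frac{\mu^{2}}{4}$ (equivalently, coercivity of the symbol $|\xi|^{4}-\mu|\xi|^{2}-\lambda$), sign-changing via oscillation of the kernel (the paper simply cites Theorem 3.7 of \cite{DBon}), and the asymptotics in (4)--(5) via squeezing and the monotonicity $\sigma(a,\mu_{2})\le\sigma(a,\mu_{1})\le\sigma(a,0)$.

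There is, however, one genuine gap, in the compactness step of part (1). You work with a raw minimizing sequence and assert that $m_r(a,\mu)<-\frac{a^{2}\mu^{2}}{8}$ ``rules out vanishing, forces a nonzero weak limit with $\|u\|_{2}=a$''. The threshold does rule out vanishing: if the weak limit were $0$, the compact radial embedding kills the $L^{p}$ term and $E_{\mu}(u_{n})\ge\frac12\|\Delta u_{n}\|_{2}^{2}-\frac{\mu}{2}\|\nabla u_{n}\|_{2}^{2}+o_{n}(1)\ge-\frac{a^{2}\mu^{2}}{8}+o_{n}(1)$, a contradiction. But it does not, by itself, exclude dichotomy $0<\|u\|_{2}<a$: for a mere minimizing sequence there are no Lagrange multipliers, so your multiplier-based mechanism is unavailable, and the route of \cite{Nbal} you allude to would require strict subadditivity of $\alpha\mapsto m_r(\alpha,\mu)+\frac{\mu^{2}\alpha^{2}}{8}$, which is delicate here because mass-rescaled competitors need not respect the local constraint $\|\Delta u\|_{2}<R_{0}(\alpha,\mu)$ in the supercritical regime. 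The paper sidesteps this entirely: it projects the minimizing sequence onto $\mathcal{P}_{+}^{a,\mu}$ by $v_{n}\mapsto s_{v_{n}}\star v_{n}$, applies Ekeland's principle to produce a Palais--Smale sequence with $P_{\mu}(u_{n})\to0$ at level $m_r(a,\mu)$, and then Lemma \ref{lem4.2} applies; note that your completed-square computation, run along that sequence rather than on the limit, gives exactly $\lambda a^{2}\le 2\,m_r(a,\mu)<-\frac{a^{2}\mu^{2}}{4}$, after which coercivity of the quadratic form yields strong $H^{2}$ convergence. So part (1) should be routed through the same Palais--Smale machinery you already set up for part (2). A second, minor, inaccuracy: the constants $C^{*}(N,p)$ and $C_{*}(N,p)$ are not what keeps the competitor inside $A^{r}_{R_{0}}$ nor what makes $\sigma(a,\mu)>0$ (both follow from $\tilde{C}(N,p)$ and $p<4$ alone); in the paper they are used only to prove $\lambda<-\frac{\mu^{2}}{4}$ for the positive-level Palais--Smale sequence --- the two cases $\gamma_p>\frac12$ and $\gamma_p\le\frac12$ in Lemma \ref{lem4.1} --- which is precisely where you invoke them in part (3).
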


\begin{theorem}\label{th1.3}
Let $N\!\geq\!5$, $\overline{p}\!<\!p\!<\!\min\{4,4^*\}$, $a_k \!\to\! 0^+$ as $k\!\to\!+\infty$ and $u_k\!\in\! A_{R_{0}}^{r}\!=\!\left\{u \!\in\! S_{a_k} \cap H_{{rad}}^{2} : {||\Delta u||}_2\!<\!R_0(a_k,\mu) \right\}$ be a minimizer of $m_r(a_k,\mu)$ for each $k\!\in\! \mathbb{N}$. Then  \\
$\textbf{(1)}$ There exists $\{\varepsilon_k\}\!\subset\!{\R}^+$ with $\varepsilon_k \!\to\! 0$ such that $-\frac{{\mu}^2a_k^2}{8}(1\!+\!\varepsilon_k) \!\leq\! m_r(a_k, \mu)\!\leq\!-\frac{{\mu}^2a_k^2}{8}$, $\forall k\!\in\! \mathbb{N}.$ \\
$\textbf{(2)}$ The corresponding Lagrange multiplier ${\tilde{\lambda}}_k$ satisfies ${\tilde{\lambda}}_k\to (-\frac{{\mu}^2}{4})^-$ as $k\to+\infty$. \\
$\textbf{(3)}$ $\frac{||\Delta u_k||_{2}^2}{||u_k||_{2}^2} \rightarrow \frac{{\mu}^2}{4}$, $\frac{||\nabla u_k||_{2}^2}{||u_k||_{2}^2} \rightarrow \frac{\mu}{2}$ as $k\to+\infty$.\\
$\textbf{(4)}$ Let $v_k\!=\!\frac{u_k}{||u_k||_{2}}$, it results that $v_k\!\to\! 0$ in $L^q({\R}^N)$ for any $q\!\in\!(2,4^*)$. Furthermore, $\int_{\mathbb{R}^{N}}\left(|\xi|^{2}\!-\!\frac{\mu}{2 }\right)^{2}\left|(\mathcal{F}{v}_{k})(\xi)\right|^{2} d \xi \!\rightarrow \!0$ as $k \!\to\! +\infty$, where $\mathcal{F}{v}_{k}$ is the Fourier transform of ${v}_{k}$. \\
\end{theorem}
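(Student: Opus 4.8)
\section*{Proof proposal for Theorem \ref{th1.3}}

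The plan is to read off all four assertions from an exact Fourier identity together with the Gagliardo--Nirenberg lower bound. Writing $\mathcal{F}u=\hat u$ in the unitary convention and using $\|\Delta u\|_2^2=\int_{\R^N}|\xi|^4|\hat u|^2$, $\|\nabla u\|_2^2=\int_{\R^N}|\xi|^2|\hat u|^2$, I first record, from $|\xi|^4-\mu|\xi|^2=(|\xi|^2-\mu/2)^2-\mu^2/4$, the decomposition
$$
E_{\mu}(u) = -\frac{\mu^2\|u\|_2^2}{8} + \frac12 Q(u) - \frac1p\|u\|_p^p, \qquad Q(u):=\int_{\R^N}\Big(|\xi|^2 - \frac{\mu}{2}\Big)^2|\hat u(\xi)|^2\,d\xi \ge 0 .
$$
Since $Q\ge0$, the term $-\mu^2\|u\|_2^2/8$ is the exact floor coming from the quadratic part, and the whole proof amounts to measuring how close the minimizer sits to it. The upper bound $m_r(a_k,\mu)\le-\mu^2a_k^2/8$ in (1) is immediate from Theorem \ref{th1.1}(1).

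The key step, which I expect to be the only genuinely delicate one, is to control $\|\Delta u_k\|_2$. I would use \eqref{2.9} together with $\|\nabla u\|_2^2=-\int u\Delta u\le\|u\|_2\|\Delta u\|_2$ to get $E_{\mu}(u)\ge h_{a_k}(\|\Delta u_k\|_2)$, where
$$
h_a(\rho):=\frac12\rho^2 - \frac{\mu a}{2}\rho - \frac1p C_{N,p}^p a^{p(1-\gamma_p)}\rho^{p\gamma_p}.
$$
Because $p\gamma_p>2$ for $p>\overline{p}$, the profile $h_a$ decreases, increases, then decreases, with an interior local minimum near $\rho=\mu a/2$ and a local maximum at some $\rho_2$; the threshold ${\mu}^{p\gamma_p-2}a^{p-2}<C$ is exactly what makes this picture hold, and $R_0=R_0(a,\mu)\le\rho_2$ by the choice of $R_0$ in the proof of Theorem \ref{th1.1}. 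Since $h_{a_k}(\|\Delta u_k\|_2)\le m_r(a_k,\mu)<-\mu^2a_k^2/8$, the value $\|\Delta u_k\|_2$ lies in the sublevel set $\{h_{a_k}\le-\mu^2a_k^2/8\}$; writing $\rho=\frac{\mu a_k}{2}(1+s)$, this becomes $s^2\le\kappa_k(1+s)^{p\gamma_p}$ with $\kappa_k\sim\mu^{p\gamma_p-2}a_k^{p-2}\to0$. The hard part is precisely the separation of branches: as $a_k\to0$ one has $R_0(a_k,\mu)\to+\infty$, so the defining constraint $\|\Delta u_k\|_2<R_0$ is far too weak on its own; it is the energy level $m_r<-\mu^2a_k^2/8$ that, after discarding the far branch via $\|\Delta u_k\|_2<R_0\le\rho_2$, traps $\|\Delta u_k\|_2$ on $|s|\lesssim\sqrt{\kappa_k}\to0$. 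This yields $\|\Delta u_k\|_2^2/a_k^2\to\mu^2/4$, the first half of (3).

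With $\|\Delta u_k\|_2=\frac{\mu a_k}{2}(1+o(1))$ in hand, \eqref{2.9} gives $\|u_k\|_p^p\le C_{N,p}^p a_k^{p(1-\gamma_p)}\|\Delta u_k\|_2^{p\gamma_p}=O(a_k^p)=o(a_k^2)$ since $p>2$. Substituting into the decomposition proves (1) with $\varepsilon_k:=\frac{8\|u_k\|_p^p}{p\mu^2a_k^2}\to0$; moreover $m_r(a_k,\mu)\le-\mu^2a_k^2/8$ forces $\frac12Q(u_k)\le\frac1p\|u_k\|_p^p=o(a_k^2)$, so $Q(v_k)=Q(u_k)/a_k^2\to0$, which is the second statement of (4). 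Using $Q(u_k)=\|\Delta u_k\|_2^2-\mu\|\nabla u_k\|_2^2+\frac{\mu^2}{4}a_k^2$, I then solve for $\|\nabla u_k\|_2^2/a_k^2\to\mu/2$, finishing (3). For (2), I test the equation against $u_k$ to obtain $\tilde\lambda_k a_k^2=\|\Delta u_k\|_2^2-\mu\|\nabla u_k\|_2^2-\|u_k\|_p^p$; dividing by $a_k^2$ and letting $k\to\infty$ gives $\tilde\lambda_k\to\frac{\mu^2}{4}-\mu\cdot\frac{\mu}{2}-0=-\frac{\mu^2}{4}$, while $\tilde\lambda_k<-\mu^2/4$ from Theorem \ref{th1.1}(3) upgrades this to the one-sided limit $\tilde\lambda_k\to(-\mu^2/4)^-$. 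Finally, for the first claim of (4), the bounds above show $v_k=u_k/\|u_k\|_2$ is bounded in $H^2_{rad}(\R^N)$; by the compact embedding $H^2_{rad}(\R^N)\hookrightarrow L^q(\R^N)$ for $2<q<4^*$ (valid since $N\ge5$), along a subsequence $v_k\to v$ in $L^q$ and $v_k\rightharpoonup v$ in $H^2$, and weak lower semicontinuity of the nonnegative form $Q$ gives $Q(v)\le\liminf Q(v_k)=0$, so $\hat v$ is supported on the measure-zero sphere $\{|\xi|^2=\mu/2\}$ and hence $v=0$. As every subsequential limit is $0$, the whole sequence satisfies $v_k\to0$ in $L^q(\R^N)$ for all $q\in(2,4^*)$.
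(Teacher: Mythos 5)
Your proposal is correct, and it reaches all four conclusions by a genuinely different route from the paper's. The paper's proof rests on two inputs that you avoid entirely. First, to bound $\|\Delta u_k\|_2/\|u_k\|_2$ and to prove (3), the paper uses the Pohozaev identity $P_{\mu}(u_k)=0$ (valid because minimizers are critical points), pairing it with the energy identity to get two linear relations in the limits of $\|\Delta u_k\|_2^2/a_k^2$ and $\|\nabla u_k\|_2^2/a_k^2$; you instead trap $\|\Delta u_k\|_2$ by a sublevel-set analysis of the truncation function $h$, using only the membership $\|\Delta u_k\|_2<R_0$ and the strict bound $m_r(a_k,\mu)<-\mu^2a_k^2/8$ of Lemma \ref{lema3.4.1}, and then recover $\|\nabla u_k\|_2$ from the exact identity $Q(u_k)=\|\Delta u_k\|_2^2-\mu\|\nabla u_k\|_2^2+\frac{\mu^2}{4}a_k^2$. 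Second, the paper twice quotes Lemma 3.1 of \cite{Nbal}: once for $\inf_{S_a}\{\frac12\|\Delta u\|_2^2-\frac{\mu}{2}\|\nabla u\|_2^2\}=-\frac{\mu^2a^2}{8}$, and once for the $L^q$-vanishing of $v_k$; you prove both inline, the lower bound via the pointwise decomposition $|\xi|^4-\mu|\xi|^2=(|\xi|^2-\mu/2)^2-\mu^2/4$ (only $Q\ge0$ is needed), and the vanishing via the compact embedding $H^2_{rad}\hookrightarrow L^q$, weak lower semicontinuity of the nonnegative form $Q(v)=\|(\Delta+\frac{\mu}{2})v\|_2^2$, and the fact that $Q(v)=0$ forces $\mathcal{F}v$ to vanish off a measure-zero sphere. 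Your argument is thus self-contained, and it derives (1), (3) and (4) without invoking criticality of $u_k$ at all (criticality enters only in (2), through the multiplier); the paper's proof is shorter but outsources these two analytic facts.

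One factual slip occurs at the step you yourself call the crux: the claim that $R_0(a_k,\mu)\to+\infty$ as $a_k\to0^+$ is false. By Lemma \ref{lem3.1}, $R_0$ is the first zero of $h$, i.e.\ the smaller root of $\varphi(t)=\frac{\mu a_k}{2}$ with $\varphi(t)=\frac12 t-\frac{C_{N,p}^p}{p}a_k^{p(1-\gamma_p)}t^{p\gamma_p-1}$, and writing $R_0=\mu a_k(1+\delta_k)$ one finds $\delta_k=O(\mu^{p\gamma_p-2}a_k^{p-2})\to0$, so in fact $R_0=\mu a_k(1+o(1))\to0$. Fortunately the slip is only rhetorical and your chain of deductions survives it: what you actually use is that $R_0\le\rho_2<R_1$, so the constraint $\|\Delta u_k\|_2<R_0$ excludes the far branch of $\{h\le-\mu^2a_k^2/8\}$ (which lies beyond $R_1$, where $h$ is again negative), together with the elementary fact that the near branch of $\{s^2\le\kappa_k(1+s)^{p\gamma_p}\}$ is an interval which cannot contain $s=1$ once $2^{p\gamma_p}\kappa_k<1$, hence is contained in $\{|s|\le 2^{p\gamma_p/2}\sqrt{\kappa_k}\}$. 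The true asymptotics $R_0\sim\mu a_k$ actually make the trapping more immediate, since then $1+s\le\frac{2R_0}{\mu a_k}\to2$ bounds the right-hand side of your inequality directly.
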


\begin{remark}\label{re1.3}
Theorem \ref{th1.1} indicates that there exists at least two normalized solutions to (\ref{eq1.1}) in the $L^2$-supercritical setting, one radial ground state and one radial excited state (whose energy is strictly larger than that of the radial ground state).  Moreover, the radial ground state to (\ref{eq1.1}) vanishes and the radial excited state converges to a radial ground state of the related limiting equation $\Delta^{2} u\!-\! {\hat{\lambda}}u\!=\!|u|^{p-2} u$ as $\mu\!\to\! 0^{+}$. These facts show that the sign of the coefficient of the second-order dispersion term has crucial effect on the structure of the energy functional $E_{\mu}$ and makes the solution set to (\ref{eq1.1}) much richer. Theorem \ref{th1.3} describes the asymptotic properties of the solutions (local minimizers) obtained in Theorem \ref{th1.1} as its mass vanishes. Theorem \ref{th1.3}-(1) implies that it is very difficult to prove
$m(a,\mu)<-\frac{a^2{\mu}^2}{8}$ for $a>0$ small, which is vital in the proof of Theorem \ref{th1.1}. From Theorem \ref{th1.3}-(4) we see that the $L^2$-norm of $\{v_k\}\subset S_1$ concentrate near the sphere of radius $\sqrt{\frac{\mu}{2 }}$ centered at the origin when $k\to+\infty$. This was one of the keys to find the test functions that allow to
show that, under the assumption $\bar{p}<p<4$, the strict inequality $m(a,\mu)<-\frac{a^2{\mu}^2}{8}$ holds for $a,\mu>0$ such that
${\mu}^{p\gamma_p-2}{a}^{p-2}<\tilde{C}(N,p)$. We also point out that the condition $m(a,\mu)<-\frac{a^2{\mu}^2}{8}$ was already observed by the work of N. Boussaid et al. in \cite{Nbal}.

Theorems \ref{th1.1}-\ref{th1.3} mainly extend the results in \cite{DBon,dbJB}, which deal with (\ref{eq1.1}), from $\mu\leq0$ to the case of $\mu>0$, and also extend the results in \cite{TJLu,Nbal}, which deal with (\ref{eq1.1}), from $L^2$-subcritical and $L^2$-critical setting $2<p\leq\bar{p}$ to $L^2$-supercritical setting $\bar{p}<p<4^*$.
\end{remark}

\begin{remark}
For $N\!\geq\!8$ and $p$ satisfying $\overline{p}\!<\!p\!<\!\min\{4,4^*\}$, we see that $p$ covers the interval $(\overline{p}, 4^*)$ since $4^*\leq4$. In Theorem \ref{th1.1}, the condition ${\mu}^{p\gamma_p-2}{a}^{p-2}\!<\!\tilde{C}(N,p)$ makes sure that $E_{\mu}$ presents a convex-concave geometry. Therefore, it is possible to expect the existence of a local minimizer and a mountain pass critical point for $E_{\mu}|_{S_{a}}$. The extra conditions ${\mu}^{p\gamma_p-2}{a}^{p-2}\!<\!\min \{{C}^{*}(N,p), {C}_{*}(N,p)\}$ are useful in searching the mountain pass critical point for $E_{\mu}|_{S_{a}}$, but they are not necessary in obtaining the local minimizer.
\end{remark}

Now we underline some of the difficulties that arise in the proof of Theorem \ref{th1.1}. Since $\inf _{u \in S_{a}} E_{\mu}(u)=-\infty$  for $\bar{p}<p<4^*$, the constrained minimization method used in \cite{DBon,TJLu,Nbal} does not work any more. Naturally, we would hope to overcome this difficulty by using the Pohozaev constraint approach adopted in \cite{iniL,dbJB,NSoa,nsoa}.

However, the compactness of a Palais-Smale sequence is a highly nontrivial issue for $\mu\!>\!0$ even in the radial space $H_{rad}^2(\R^N)$. To be precise, let $\left\{u_{n}\right\} \subset S_{a,r}\!=\!S_a \cap H_{rad}^2$ be a Palais-Smale sequence for $E_{\mu}|_{S_a}$ at level $c \neq 0$ with
$$ P_{\mu}\left(u_{n}\right):=2{||\Delta u_{n}||}_2^2-\mu{||\nabla u_{n}||}_2^2-2{\gamma_p}{||u_{n}||}_p^p=o_n(1)$$
and $u_{n} \rightharpoonup u$ in $H^2(\R^N)$. \textbf{Firstly, we shall claim $u\not \equiv0$}. Since $u \equiv0$ leads to
$$c=\mathop {\lim }\limits_{n  \to \infty} E_{\mu}\left(u_{n}\right)=\mathop {\lim }\limits_{n  \to \infty} [E_{\mu}\left(u_{n}\right)-\frac{1}{4}P_{\mu}\left(u_{n}\right)]=-\frac{\mu}{4} \mathop {\lim }\limits_{n  \to \infty}||\nabla u_{n}||_{2}^{2}+\frac{p \gamma_{p}-2} {2p}||u||_{p}^{p}\leq0,$$
it must be $u\not \equiv0$ if $c\!>\!0$. In the case of $c\!<\!0$, we need the exact upper bound $c\!<\!-\frac{a^2\mu^2}{8}$ in proving $u\not \equiv0$, see Lemma \ref{lem4.2} for details. As can be seen from Theorem \ref{th1.3}, this is especially difficult for $a\!>\!0$ small since  $$m_r(a_k,\mu)\to-\frac{a_k^2\mu^2}{8}
~~~~\mbox{with}~~~~a_k\to0^+~~~~\mbox{as}~~~~k\to+\infty.$$
In \cite{Nbal}, N. Boussaid et al. observed that $m(a,\mu,\gamma)<-\frac{a^2{\mu}^2}{8\gamma}$ is a necessary and sufficient condition to avoid the vanishing of the minimizing sequences for $m(a,\mu,\gamma)=\inf _{u \in S_{a}} E_{\mu,\gamma}(u)$ provided $2\!<\!p\!<\!\bar{p}$. Our case is more delicate since we consider $\bar{p}\!<\!p\!<\!4^*$. In this case, $m_r(a, \mu)$ is characterized by a local minimizer value rather than a global one. We use truncation skills to prove $m_r(a, \mu)\!<\!-\frac{a^2\mu^2}{8}$ and we shall always keep the testing functions staying in the admissible set, which is the key ingredient of Section 3.

Having proved $u\not \equiv0$ and using the compact embedding $H_{rad}^{2}\left(\mathbb{R}^{N}\right) \hookrightarrow L^{r}\left(\mathbb{R}^{N}\right)$ for $r \in\left(2,4^{*}\right)$, we derive that the corresponding Lagrange multipliers $\lambda_n \to\lambda<0$ and
\begin{equation} \label{eq1.08}
||\Delta( u_{n}-u)||_2^{2}-\mu||\nabla( u_{n}-u)||_2^{2}-\lambda ||u_{n}-u||_2^{2}=o_n(1).
\end{equation}
\textbf{But we can not obtain $\nabla u_{n} \rightarrow \nabla u$ in $L^2(\R^N)$ from $u_{n} \rightharpoonup u$ in $H^2(\R^N)$}. If $\mu\leq 0$, as in \cite{dbJB}, (\ref{eq1.08}) is sufficient to deduce that $u_{n} \rightarrow u$ in $H^2(\R^N)$, but we are in the setting $\mu>0$.

Assume that $||\Delta( u_{n}-u)||_2^{2}\geq\delta$ and $||u_{n}-u||_2^{2}\geq\delta$ for some $\delta>0$, otherwise compactness holds. From (\ref{eq1.08}), we have
\begin{align*}
||\Delta( u_{n}\!-\!u)||_2^{2}\!-\!\lambda ||u_{n}\!-\!u||_2^{2}\!=\!\mu||\nabla( u_{n}\!-\!u)||_2^{2}\!+\!o_n(1)\!\leq\! \mu||\Delta( u_{n}\!-\!u)||_2|| u_{n}\!-\!u||_2\!+\!o_n(1).
\end{align*}
It results that
\begin{align*}
2\sqrt{-\lambda} \leq \frac{||\Delta( u_{n}-u)||_2}{||u_{n}-u||_2}-\lambda \frac{||u_{n}-u||_2}{||\Delta( u_{n}-u)||_2} \leq \mu+o_n(1)  \Longrightarrow -\frac{{\mu}^2}{4} \leq \lambda <0.
\end{align*}
To get compactness, a possible way is to prove $\lambda<-\frac{{\mu}^2}{4}$.
We notice that the lower bound of $||\Delta u_{n}||_2$ determines the upper bound of $\lambda$, and therefore the sign of the energy level $c$ is very important in the analysis. An uniformly lower bound of $||\Delta u_{n}||_2$ follows easily from $c>0$. However, if $c<0$, we can only obtain an uniformly upper bound of $||\Delta u_{n}||_2$. The reason lies in that the upper bound of $c$ is too large. Thanks to Lemma \ref{lema3.4.1}, we have the exact upper bound $c<-\frac{a^2\mu^2}{8}$, which is sufficient to give an uniformly positive lower bound of $||\Delta u_{n}||_2$. See Lemmas \ref{lem4.1}-\ref{lem4.2} for details. \\

The paper is organized as follows, in Section 2, we give some preliminary results. In Section 3, we give the exact upper bounds of $m(a,\mu)$ and $m_r(a,\mu)$. In Section 4, we give the compactness analysis of Palais-Smale sequences. In Section 5, we prove Theorems \ref{th1.1}-\ref{th1.3}. \\

\noindent \textbf{Notations:}~~~~$D^{2,2}(\R^N)$ is the completion of $C_{c}^{\infty}(\R^N)$ under the norm $\|u\|_{{D}^{2,2}}\!=\!||\Delta u||_{2}$. $L^{p}(\R^N)~(1\!<\!p\!\leq\!\infty)$ is the Lebesgue space with norm $||u||_{p}\!=\!\Big(\int_{\R^N} {{|u(x)|}^p dx}\Big)^{\frac{1}{p}}$. Denote
$H^{2}(\R^N)\!=\!\left\{u \!\in\! L^{2}(\R^N): \nabla u, \Delta u \!\in\! L^{2}(\R^N)\right\} $ and $ H_{{rad}}^{2}(\R^N)\!=\!\left\{u \!\in\! H^{2}(\R^N) : u(x)\!=\!u(|x|)\right\}$
with the equivalent norm $\left(||u||_{2}^{2}+||\Delta u||_{2}^{2}\right)^{\frac{1}{2}}$. We use $``\rightarrow"$ and $``\rightharpoonup"$ to denote the strong and weak convergence in the related function spaces respectively. $C$ and $C_{i}$ will denote positive constants. $\langle\cdot,\cdot\rangle$ denote the dual pair for any Banach space and its dual space. $\mathbb{N}=\{1,2,\cdots\}$ is the set of natural numbers. $\mathbb{R}$ and $\mathbb{C}$ denote the sets of real and complex numbers respectively. $\overline{\Omega}$ denotes the closure of $\Omega$. $o_{n}(1)$ and $O_{n}(1)$ mean that $|o_{n}(1)|\to 0$ as $n\to+\infty$ and $|O_{n}(1)|\leq C$ as $n\to+\infty$, respectively. The Fourier transform of $u\in L^{2}(\R^N)$ is denoted by $\mathcal{F}u$.


\section{Preliminaries}

\setcounter{equation}{0}
In this section, we give some preliminary results.

We shall often use the interpolation inequality(See (2.3) in \cite{Nbal})
\begin{equation}   \label{eqa2.3.1}
{||\nabla u||}_2^2\leq{||u||}_2{||\Delta u||}_2,~~~~\forall u\in H^{2}(\R^N).
\end{equation}
For any $\mu\geq 0$, we deduce from \eqref{2.9} and \eqref{eqa2.3.1} that
\begin{equation}   \label{eq2.3}
 E_{\mu}(u)\geq \frac{1}{2}{||\Delta u||}_2^2-\frac{\mu a}{2}{||\Delta u||}_2-\frac{C_{N,p}^p}{p}a^{p(1-\gamma_p)}{||\Delta u||}_2^{p\gamma_p}, ~~~~\forall u\in S_a,
\end{equation}
which indicates that $\inf _{u \in S_{a}} E_{\mu}(u)>-\infty$  for $2<p< \bar{p}$, see \cite{DBon,TJLu,Nbal}. However, by dilations $u_{t}(x)=t^{\frac{N}{2}}u(tx)$ with $t\!>\!0$, we deduce that $\inf _{u \in S_{a}} E_{\mu}(u)=-\infty$ for $\bar{p}<p\leq4^*$. The constrained minimization method used in \cite{DBon,TJLu,Nbal} does not work any more. Naturally, we would hope to overcome this difficulty by using the Pohozaev constraint approach adopted in \cite{dbJB,NSoa,nsoa}. To this end, we introduce the Pohozaev set:
\begin{equation}  \label{eq2.4}
\mathcal{P}_{a, \mu}=\left\{u \in S_{a} : P_{\mu}(u)=0\right\}
\end{equation}
where
\begin{equation}  \label{eq2.5}
P_{\mu}(u) :=2{||\Delta u||}_2^2-\mu{||\nabla u||}_2^2-2{\gamma_p}{||u||}_p^p~~~~\mbox{for}~~~~\gamma_p=\frac{N(p-2)}{4p}.
\end{equation}
Then any critical point of $E_{\mu}|_{S_a}$ stays in $\mathcal{P}_{a, \mu}$ as a consequence of the Pohozaev identity:

\begin{proposition}(Lemma 2.1 in \cite{DboN})\label{pro3.01}
Let $\mu\!\in\!\R$, $\lambda\!<\!0$ and $2\!<\!p\!<\!4^{*}$. If $v \!\in\! H^{2}\left(\mathbb{R}^{N}\right)$ is a weak solution of 
\begin{equation} \label{eq3.01}
 \Delta^{2} v+\mu \Delta v-\lambda v=|v|^{p-2} v,
\end{equation}
then $v$ satisfies
$P_{\mu}(v):=\!2{||\Delta v||}_2^2\!-\!\mu{||\nabla v||}_2^2\!-\!2{\gamma_p}{||v||}_p^p\!=\!0$.
\end{proposition}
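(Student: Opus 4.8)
The plan is to obtain $P_\mu(v)=0$ by combining two scalar identities that follow from pairing \eqref{eq3.01} against two natural test directions, and then eliminating the a priori unknown quantity $\lambda\|v\|_2^2$ between them. First I would record the relation obtained by testing the weak formulation of \eqref{eq3.01} with $v$ itself:
\begin{equation*}
\|\Delta v\|_2^2-\mu\|\nabla v\|_2^2-\lambda\|v\|_2^2-\|v\|_p^p=0.\tag{I}
\end{equation*}
This is immediate from the definition of weak solution, since $v\in H^2(\mathbb{R}^N)$ is itself an admissible test function and the nonlinearity pairs with $H^2$ by the Sobolev embedding $H^2\hookrightarrow L^p$, valid for $2<p<4^*$.

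The second relation is a Pohozaev-type identity generated by the dilation $v_t(x):=v(x/t)$. A transparent way to produce it is to introduce the free energy $J(v):=\tfrac12\|\Delta v\|_2^2-\tfrac\mu2\|\nabla v\|_2^2-\tfrac\lambda2\|v\|_2^2-\tfrac1p\|v\|_p^p$, whose critical points are exactly the weak solutions of \eqref{eq3.01}. From the homogeneities $\|\Delta v_t\|_2^2=t^{N-4}\|\Delta v\|_2^2$, $\|\nabla v_t\|_2^2=t^{N-2}\|\nabla v\|_2^2$, $\|v_t\|_2^2=t^{N}\|v\|_2^2$ and $\|v_t\|_p^p=t^{N}\|v\|_p^p$, differentiating $t\mapsto J(v_t)$ at $t=1$ and using that this derivative vanishes because $J'(v)=0$ gives
\begin{equation*}
\tfrac{N-4}{2}\|\Delta v\|_2^2-\tfrac{\mu(N-2)}{2}\|\nabla v\|_2^2-\tfrac{\lambda N}{2}\|v\|_2^2-\tfrac{N}{p}\|v\|_p^p=0.\tag{II}
\end{equation*}
Substituting $\lambda\|v\|_2^2=\|\Delta v\|_2^2-\mu\|\nabla v\|_2^2-\|v\|_p^p$ from (I) into (II), the coefficient of $\|\Delta v\|_2^2$ collapses to $\tfrac{N-4}{2}-\tfrac N2=-2$, that of $\|\nabla v\|_2^2$ to $-\tfrac{\mu(N-2)}2+\tfrac{\mu N}2=\mu$, and that of $\|v\|_p^p$ to $\tfrac N2-\tfrac Np=\tfrac{N(p-2)}{2p}=2\gamma_p$; after an overall sign change this reads $2\|\Delta v\|_2^2-\mu\|\nabla v\|_2^2-2\gamma_p\|v\|_p^p=0$, which is precisely $P_\mu(v)=0$.

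The delicate point, and the step I expect to be the main obstacle, is the rigorous justification of (II): the formal chain rule $\tfrac{d}{dt}J(v_t)\big|_{t=1}=\langle J'(v),\partial_t v_t|_{t=1}\rangle$ involves $\partial_t v_t|_{t=1}=-\,x\cdot\nabla v$, which need not belong to $H^2(\mathbb{R}^N)$ without extra decay, so the pairing with $J'(v)=0$ is not automatic. I would resolve this in the standard way. First, upgrade regularity by an elliptic bootstrap: writing \eqref{eq3.01} as $\Delta^2 v=\lambda v+|v|^{p-2}v-\mu\Delta v$ and iterating local $W^{k,q}$ estimates yields $v\in W^{4,q}_{loc}$ together with the decay at infinity needed to discard boundary contributions. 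Then, instead of differentiating along $v_t$, multiply \eqref{eq3.01} by $(x\cdot\nabla v)\,\varphi(\cdot/R)$ for a fixed cutoff $\varphi\in C_c^\infty(\mathbb{R}^N)$, integrate by parts, and let $R\to\infty$, checking that every term in which a derivative lands on the cutoff tends to $0$. The only genuinely new computation relative to the second-order case is the biharmonic term, for which two integrations by parts and the identity $\Delta(x\cdot\nabla v)=2\Delta v+x\cdot\nabla(\Delta v)$ give $\int_{\mathbb{R}^N}\Delta^2 v\,(x\cdot\nabla v)=-\tfrac{N-4}{2}\|\Delta v\|_2^2$; together with the contributions of $\mu\Delta v$, $-\lambda v$ and $|v|^{p-2}v$ this reproduces (II). Since the statement is quoted as Lemma 2.1 of \cite{DboN}, in practice I would cite that reference for the regularity-and-cutoff bookkeeping, the scaling computation above being the conceptual shortcut that fixes the coefficients.
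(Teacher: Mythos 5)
Your proposal is correct: identities (I) and (II) are the standard Nehari and Pohozaev relations for \eqref{eq3.01}, your elimination of $\lambda\|v\|_2^2$ yields exactly $P_{\mu}(v)=0$ (the collapsed coefficients $-2$, $\mu$ and $2\gamma_p=\tfrac{N(p-2)}{2p}$ all check out, as does the biharmonic computation $\int_{\R^N}\Delta^2 v\,(x\cdot\nabla v)=-\tfrac{N-4}{2}\|\Delta v\|_2^2$), and the cutoff-multiplier argument you sketch, with the bookkeeping deferred to the cited source, is the standard rigorous justification of (II). Note that the paper itself gives no proof of this proposition at all — it simply quotes Lemma 2.1 of \cite{DboN} — and the proof in that reference proceeds along essentially the route you describe, so your argument reconstructs the intended one rather than diverging from it.
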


The properties of $\mathcal{P}_{a, \mu}$ are related to the minimax structure of $E_{\mu}|_{S_a}$, and in particular to the behavior of $E_{\mu}$ with respect to dilations preserving the $L^2$-norm. To be more precise, for $u \in S_a$ and $s\in \mathbb{R}$, let
\begin{equation} \label{eq1.12}
(s \star u)(x) :=e^{\frac{N}{2} s} u\left(e^{s} x\right),~~~~\mbox{for}~~~~\mbox{a.e.}~~~~x \in \mathbb{R}^{N}.
\end{equation}
It results that $s \star u \in S_a$, and hence it is natural to study the fiber maps
\begin{equation} \label{eq1.13}
\Psi_{u}^{\mu}(s) :=E_{\mu}(s \star u)=\frac{e^{4s}}{2} {||\Delta u||}_2^2-\frac{\mu}{2}e^{2s} {||\nabla u||}_2^2-\frac{e^{2p\gamma_{p} s}}{p}{||u||}_p^p.
\end{equation}
We shall see that a critical point of $\Psi_{u}^{\mu}(s)$ allows to project a function on $\mathcal{P}_{a, \mu}$. Thus, monotonicity and convexity properties of $\Psi_{u}^{\mu}(s)$ strongly affects the structure of $\mathcal{P}_{a, \mu}$ (and in turn the geometry of $E_{\mu}|_{S_a}$ ), and also have a strong impact on properties of the the time-dependent equation (\ref{eq1.3}).
In this direction, let us consider the decomposition of $\mathcal{P}_{a, \mu}$ into the disjoint union $\mathcal{P}_{a, \mu}=\mathcal{P}_{+}^{a, \mu}\cup \mathcal{P}_{0}^{a, \mu} \cup \mathcal{P}_{-}^{a, \mu} $, where
$$\mathcal{P}_{+}^{a, \mu} :=\left\{u \in \mathcal{P}_{a, \mu} : 4||\Delta u||_{2}^{2}>\mu||\nabla u||_{2}^{2} + 2p \gamma_{p}^{2}||u||_{p}^{p}\right\}=\left\{u \in \mathcal{P}_{a, \mu} :\left(\Psi_{u}^{\mu}\right)^{\prime \prime}(0)>0\right\};$$
$$\mathcal{P}_{0}^{a, \mu} :=\left\{u \in \mathcal{P}_{a, \mu} : 4||\Delta u||_{2}^{2}=\mu||\nabla u||_{2}^{2} + 2p \gamma_{p}^{2}||u||_{p}^{p}\right\}=\left\{u \in \mathcal{P}_{a, \mu} :\left(\Psi_{u}^{\mu}\right)^{\prime \prime}(0)=0\right\};$$
$$\mathcal{P}_{-}^{a, \mu} :=\left\{u \in \mathcal{P}_{a, \mu} : 4||\Delta u||_{2}^{2}<\mu||\nabla u||_{2}^{2} + 2p \gamma_{p}^{2}||u||_{p}^{p}\right\}=\left\{u \in \mathcal{P}_{a, \mu} :\left(\Psi_{u}^{\mu}\right)^{\prime \prime}(0)<0\right\}.$$
For $u \in S_a$, $s \in \mathbb{R}$ and the fiber $\Psi_{u}^{\mu}$ introduced in (\ref{eq1.13}), we have
\begin{equation} \label{equa2.9}
\left(\Psi_{u}^{\mu}\right)^{\prime}(s)=2e^{4s}{||\Delta u||}_2^2-{\mu}e^{2s} {||\nabla u||}_2^2-2\gamma_{p} e^{2p\gamma_{p} s}{||u||}_p^p=P_{\mu}(s \star u),
\end{equation}
where $P_{\mu}$ is defined by (\ref{eq2.5}). From (\ref{equa2.9}), we can see immediately that:

\begin{corollary} \label{coroll2.1}
Let $u \in S_a$. Then: $s \in \mathbb{R}$ is a critical point for $\Psi_{u}^{\mu}$ if and only if $s \star u \in \mathcal{P}_{a, \mu}$.
\end{corollary}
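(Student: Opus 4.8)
The plan is to treat this as an immediate consequence of the identity already recorded in \eqref{equa2.9}, so almost no new work is required; the entire substance lies in that derivative computation. First I would recall the defining property of the Pohozaev set from \eqref{eq2.4}: a function $v \in S_a$ belongs to $\mathcal{P}_{a,\mu}$ precisely when $P_\mu(v)=0$. Thus to connect critical points of $\Psi_u^\mu$ with membership in $\mathcal{P}_{a,\mu}$ I only need two ingredients: that the dilated function $s\star u$ stays on the constraint sphere $S_a$, and that $\left(\Psi_u^\mu\right)'(s)$ coincides with $P_\mu(s\star u)$.

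The first ingredient is the observation that the scaling \eqref{eq1.12} is $L^2$-norm preserving: for every $s\in\mathbb{R}$,
$$
\|s\star u\|_2^2=\int_{\R^N} e^{Ns}\,|u(e^s x)|^2\,dx=\int_{\R^N}|u(y)|^2\,dy=\|u\|_2^2=a^2,
$$
so $s\star u \in S_a$ for all $s$. Consequently, the only condition distinguishing $s\star u\in\mathcal{P}_{a,\mu}$ from a generic point of $S_a$ is the vanishing of $P_\mu(s\star u)$. The second ingredient is exactly \eqref{equa2.9}, which I would obtain by differentiating the explicit expression \eqref{eq1.13} term by term in $s$ and matching the result against the definition \eqref{eq2.5} of $P_\mu$ evaluated at $s\star u$; this is a routine computation using $\|\Delta(s\star u)\|_2^2=e^{4s}\|\Delta u\|_2^2$, $\|\nabla(s\star u)\|_2^2=e^{2s}\|\nabla u\|_2^2$, and $\|s\star u\|_p^p=e^{(p\gamma_p\cdot 2)s}\|u\|_p^p$ with $\gamma_p=\frac{N(p-2)}{4p}$.

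With these in hand, the equivalence is a short chain: $s$ is a critical point of $\Psi_u^\mu$ if and only if $\left(\Psi_u^\mu\right)'(s)=0$, which by \eqref{equa2.9} is equivalent to $P_\mu(s\star u)=0$, which in turn—since $s\star u\in S_a$—is by \eqref{eq2.4} equivalent to $s\star u\in\mathcal{P}_{a,\mu}$. There is no genuine obstacle here; the statement is a bookkeeping corollary of \eqref{equa2.9}, and the only point requiring care is that the scaling respects the $L^2$ constraint so that the Pohozaev condition alone characterizes membership in $\mathcal{P}_{a,\mu}$.
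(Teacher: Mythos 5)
Your proposal is correct and follows exactly the paper's route: the paper also derives the corollary immediately from the identity $\left(\Psi_{u}^{\mu}\right)'(s)=P_{\mu}(s\star u)$ in \eqref{equa2.9}, having already noted after \eqref{eq1.12} that the dilation preserves the $L^2$-norm so that $s\star u\in S_a$. Your term-by-term verification of the scaling relations and of the exponent $2p\gamma_p=\tfrac{N(p-2)}{2}$ is the same routine computation the paper leaves implicit.
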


In particular, $u \in \mathcal{P}_{a, \mu}$ if and only if $s=0$ is a critical point of $\Psi_{u}^{\mu}(s)$. For future convenience, we also recall that the map $(s, u) \in \mathbb{R} \times H^{2}(\R^N) \mapsto s \star u \in H^{2}(\R^N)$ is continuous (The proof is similar to that of Lemma 3.5 in \cite{TBns}). \\

To study the structure of the Pohozaev manifold $\mathcal{P}_{a,\mu}$, let us recall the constant
\begin{equation} \label{cost2.11}
\tilde{C}(N,p):=\!\frac{p}{2(p\gamma_p\!-\!1)C_{N,p}^p}{\Big( \frac{p\gamma_p\!-\!2}{p\gamma_p\!-\!1} \Big)}^{p\gamma_p\!-\!2}
\end{equation}
and the decomposition
$\mathcal{P}_{a, \mu}=\mathcal{P}_{+}^{a, \mu}\cup \mathcal{P}_{0}^{a, \mu} \cup \mathcal{P}_{-}^{a, \mu} $, we have:

\begin{lemma}\label{lem3.3}
Let $N\geq2$, $\overline{p}<p<4^*$, $a>0$, $\mu>0$ and ${\mu}^{p\gamma_p-2}{a}^{p-2}<\tilde{C}(N,p)$. Then $\mathcal{P}_{0}^{a, \mu}=\emptyset$, and $\mathcal{P}_{a, \mu}$ is a smooth manifold of codimension 2 in $H^{2}(\R^N)$.
\end{lemma}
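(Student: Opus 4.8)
\emph{Plan.} I would prove the two assertions in turn: first that the smallness hypothesis forces $\mathcal{P}_{0}^{a,\mu}=\emptyset$, and then read off the manifold structure from this fact via the regular value theorem. Arguing by contradiction, suppose $u\in\mathcal{P}_{0}^{a,\mu}$. By definition $u$ satisfies both $P_{\mu}(u)=0$ and $\left(\Psi_{u}^{\mu}\right)^{\prime\prime}(0)=8\|\Delta u\|_{2}^{2}-2\mu\|\nabla u\|_{2}^{2}-4p\gamma_{p}^{2}\|u\|_{p}^{p}=0$. Eliminating $\mu\|\nabla u\|_{2}^{2}$ between these two identities, and using $u\neq0$ (so $\|u\|_{p}^{p}>0$), I would extract the two relations
\begin{equation*}
\|\Delta u\|_{2}^{2}=\gamma_{p}(p\gamma_{p}-1)\|u\|_{p}^{p},\qquad \mu\|\nabla u\|_{2}^{2}=2\gamma_{p}(p\gamma_{p}-2)\|u\|_{p}^{p}.
\end{equation*}
Since $\overline{p}<p$ gives $p\gamma_{p}>2$, both right-hand coefficients are strictly positive, so in particular $\|\Delta u\|_{2}>0$.

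Next I would turn these two relations into competing bounds for $\|\Delta u\|_{2}$. Feeding the first relation into the Gagliardo--Nirenberg inequality \eqref{2.9}, and recalling that $2-p\gamma_{p}<0$, yields a lower bound $\|\Delta u\|_{2}^{p\gamma_{p}-2}\geq\big[\gamma_{p}(p\gamma_{p}-1)C_{N,p}^{p}a^{p(1-\gamma_{p})}\big]^{-1}$. On the other hand, inserting the first relation into the second and then applying the interpolation inequality \eqref{eqa2.3.1} in the form $\|\nabla u\|_{2}^{2}\leq a\|\Delta u\|_{2}$ yields an upper bound $\|\Delta u\|_{2}\leq\frac{\mu a(p\gamma_{p}-1)}{2(p\gamma_{p}-2)}$. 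Chaining the two bounds, eliminating $\|\Delta u\|_{2}$, and collecting the resulting powers of $a$ (which sum to $a^{p-2}$ because $p(1-\gamma_{p})+(p\gamma_{p}-2)=p-2$) and of $\mu$ (namely $\mu^{p\gamma_{p}-2}$) leaves an inequality of the form $\mu^{p\gamma_{p}-2}a^{p-2}\geq\widehat{C}(N,p)$ for the explicit constant $\widehat{C}(N,p)=\frac{2^{p\gamma_{p}-2}(p\gamma_{p}-2)^{p\gamma_{p}-2}}{\gamma_{p}(p\gamma_{p}-1)^{p\gamma_{p}-1}C_{N,p}^{p}}$. A short computation using $p\gamma_{p}>2$ shows $\widehat{C}(N,p)\geq\tilde{C}(N,p)$, so the hypothesis $\mu^{p\gamma_{p}-2}a^{p-2}<\tilde{C}(N,p)$ is violated. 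This contradiction proves $\mathcal{P}_{0}^{a,\mu}=\emptyset$.

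For the manifold assertion, write $\mathcal{P}_{a,\mu}=\{u\in H^{2}(\R^{N}):G(u)=0,\ P_{\mu}(u)=0\}$ with $G(u):=\|u\|_{2}^{2}-a^{2}$, where $G$ and $P_{\mu}$ are both of class $C^{1}$ on $H^{2}(\R^{N})$ (the term $\|u\|_{p}^{p}$ being $C^{1}$ since $2<p<4^{*}$). By the regular value theorem it suffices to show that the differential of $(G,P_{\mu})$ is surjective at every $u\in\mathcal{P}_{a,\mu}$; since $u\neq0$ gives $dG(u)\neq0$, this reduces to ruling out the existence of $\nu\in\R$ with $dP_{\mu}(u)=\nu\,dG(u)$. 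Assuming such a $\nu$ exists, I would test this identity against the dilation generator $W:=\frac{d}{ds}(s\star u)\big|_{s=0}=\frac{N}{2}u+x\cdot\nabla u$ associated with \eqref{eq1.12}. Because $s\star u$ preserves the $L^{2}$-norm, $dG(u)[W]=\frac{d}{ds}\|s\star u\|_{2}^{2}\big|_{s=0}=0$, while by \eqref{equa2.9} and the chain rule $dP_{\mu}(u)[W]=\frac{d}{ds}P_{\mu}(s\star u)\big|_{s=0}=\left(\Psi_{u}^{\mu}\right)^{\prime\prime}(0)$. Hence $\left(\Psi_{u}^{\mu}\right)^{\prime\prime}(0)=\nu\,dG(u)[W]=0$, i.e.\ $u\in\mathcal{P}_{0}^{a,\mu}$, contradicting the first part. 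Thus the differential is everywhere surjective and $\mathcal{P}_{a,\mu}$ is a $C^{1}$ manifold of codimension $2$.

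The main obstacle is the first step. Unlike the $L^{2}$-subcritical setting one cannot invoke coercivity, so the emptiness of $\mathcal{P}_{0}^{a,\mu}$ must be forced by extracting the two opposing bounds on $\|\Delta u\|_{2}$ and checking that the threshold constant produced by that argument is dominated by $\tilde{C}(N,p)$; this is precisely where the stated smallness hypothesis is used, and it is exactly strong enough to close the contradiction. Once $\mathcal{P}_{0}^{a,\mu}=\emptyset$ is in hand, the manifold step is essentially formal.
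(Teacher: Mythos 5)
Your first half (emptiness of $\mathcal{P}_{0}^{a,\mu}$) is correct and coincides with the paper's own proof: the same two identities obtained by eliminating $\|\nabla u\|_{2}^{2}$ and $\|u\|_{p}^{p}$, the same lower bound on $\|\Delta u\|_{2}$ from \eqref{2.9}, the same upper bound from \eqref{eqa2.3.1}, and the same threshold constant
$\widehat{C}(N,p)=\frac{1}{\gamma_{p}(p\gamma_{p}-1)C_{N,p}^{p}}\bigl(\frac{2(p\gamma_{p}-2)}{p\gamma_{p}-1}\bigr)^{p\gamma_{p}-2}$.
Your verification that $\widehat{C}(N,p)\geq\tilde{C}(N,p)$ reduces to $p\gamma_{p}\leq 2^{p\gamma_{p}-1}$, which is precisely the inequality the paper records inside the proof of Lemma \ref{lem3.1}; making it explicit here is a small improvement in exposition, not a different argument.

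The manifold half, however, contains a genuine gap. Your reduction (rule out $dP_{\mu}(u)=\nu\,dG(u)$) is fine and is equivalent to the paper's claim that $dP_{\mu}(u)$ cannot vanish identically on $T_{u}S_{a}$. The problem is the step where you pair the identity with the dilation generator $W=\frac{N}{2}u+x\cdot\nabla u$ and write $dP_{\mu}(u)[W]=\frac{d}{ds}P_{\mu}(s\star u)|_{s=0}$ ``by the chain rule''. The functional $dP_{\mu}(u)$ acts only on $H^{2}(\R^{N})$, and for a general $u\in H^{2}(\R^{N})$ the vector $W$ does not belong to $H^{2}$; indeed $x\cdot\nabla u$ need not even lie in $L^{2}$ (take $u$ a sum of bumps $c_{k}\phi(x-x_{k})$ with disjoint supports, $|x_{k}|\to\infty$: then $\|x\cdot\nabla u\|_{2}^{2}$ behaves like $\sum_{k}c_{k}^{2}|x_{k}|^{2}$, which can diverge while $\sum_{k}c_{k}^{2}<\infty$). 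Equivalently, the path $s\mapsto s\star u$ is continuous but not differentiable in $H^{2}$, so although $s\mapsto P_{\mu}(s\star u)=(\Psi_{u}^{\mu})'(s)$ is an explicit smooth function of $s$, its $s$-derivative cannot be rewritten as $dP_{\mu}(u)$ applied to anything: the quantity $dP_{\mu}(u)[W]$ is simply undefined. This is exactly the point where the paper argues differently: from the Lagrange multiplier rule it deduces that $u$ is a weak solution of $4{\Delta}^{2}u+2\mu\Delta u-\nu u=2p\gamma_{p}|u|^{p-2}u$, and then invokes the Pohozaev identity for weak solutions of such equations (Proposition \ref{pro3.01}, quoted from the literature) to conclude $8\|\Delta u\|_{2}^{2}-2\mu\|\nabla u\|_{2}^{2}-4p\gamma_{p}^{2}\|u\|_{p}^{p}=0$, i.e. $u\in\mathcal{P}_{0}^{a,\mu}$, a contradiction. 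The elliptic regularity and decay needed to justify the dilation computation are hidden in the proof of that proposition; they are not free, and they are only available after one recognizes that $u$ solves a PDE. Your argument is repaired by exactly this substitution: convert the degeneracy into the Euler--Lagrange equation and cite Proposition \ref{pro3.01}, rather than testing with the (possibly inadmissible) generator $W$. As written, the step you call ``essentially formal'' is the one carrying the real technical content.
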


\begin{proof}
Firstly, we claim that $\mathcal{P}_{0}^{a, \mu}=\emptyset$. Otherwise, there exists $u \in \mathcal{P}_{0}^{a, \mu}$ such that
$$
P_{\mu}(u)\!=\!2{||\Delta u||}_2^2\!-\!\mu{||\nabla u||}_2^2\!-\!2{\gamma_p}{||u||}_p^p\!=\!0,~~~~\Psi_{u}^{\prime \prime}(0)\!=\!8||\Delta u||_{2}^{2}\!-\!2\mu||\nabla u||_{2}^{2}\!-\!4p \gamma_{p}^{2}||u||_{p}^{p}\!=\!0.
$$
By elimination of ${||\nabla u||}_2^2$ and ${||u||}_p^p$, we have
$$
||\Delta u||_{2}^{2}=\gamma_{p}(p\gamma_{p}-1)||u||_{p}^{p}\leq \gamma_{p}(p\gamma_{p}-1)C_{N,p}^p a^{p(1-\gamma_p)} {||\Delta u||}_2^{p\gamma_p}
$$
and
$$
||\Delta u||_{2}^{2}=\frac{\mu(p\gamma_{p}-1)}{2(p\gamma_{p}-2)}{||\nabla u||}_2^2\leq\frac{a \mu(p\gamma_{p}-1)}{2(p\gamma_{p}-2)} ||\Delta u||_{2}.
$$
Then we deduce the lower and upper bounds of $||\Delta u||_{2}$ by
$$
 {\Big[ \frac{1}{\gamma_p(p\gamma_p-1)C_{N,p}^pa^{p(1-\gamma_p)}} \Big]}^{\frac{1}{p\gamma_p-2}} \leq ||\Delta u||_{2} \leq  \frac{a \mu(p\gamma_{p}-1)}{2(p\gamma_{p}-2)}.
$$
This leads to
$$ {\mu}^{p\gamma_p-2}{a}^{p-2}\geq \frac{1}{\gamma_p(p\gamma_p-1)C_{N,p}^p}{\Big( \frac{2(p\gamma_p-2)}{p\gamma_p-1} \Big)}^{p\gamma_p-2},$$
which contradicts with ${\mu}^{p\gamma_p-2}{a}^{p-2}<\tilde{C}(N,p)$.

Next we check that $\mathcal{P}_{a, \mu}$ is a smooth manifold of codimension 2 in $H^{2}(\R^N)$. Notice that $\mathcal{P}_{a, \mu}=\{u \in H^{2}(\R^N): P_{\mu}(u)=0, G(u)=0\}$ for $G(u)={||u||}_2^2-a^2$, with $P_{\mu}$ and $G$ of class $C^1$ in $H^{2}(\R^N)$. Thus, we have to
show that the differential $(d G(u), d P_{\mu}(u)) : H^{2}(\R^N) \rightarrow \mathbb{R}^{2}$ is surjective, for every $u \in \mathcal{P}_{a, \mu}$. We need a claim: $\forall u \in \mathcal{P}_{a, \mu} $, there exists $\varphi \in T_{u}S_a$ such that $dP_{\mu}(u) [ \varphi ] \neq 0$. Once that the existence of $\varphi$ is established, the system
$$
\left\{\begin{array}{l}{d G(u)[\alpha \varphi+\beta u]=x} \\ {d P_{\mu}(u)[\alpha \varphi+\beta u]=y}\end{array} \Leftrightarrow\left\{\begin{array}{l}{2\beta a^{2}=x} \\ {\alpha d P_{\mu}(u)[\varphi]+\beta d P_{\mu}(u)[u]=y}\end{array}\right.\right.
$$
is solvable with respect to $\alpha, \beta$, for every $(x, y) \in{\R}^2$, and hence the surjectivity is proved.

Now, suppose by contradiction that
$$\exists~~ u \in \mathcal{P}_{a, \mu}~~~~\mbox{such that}~~~~dP_{\mu}(u) [ \varphi ]=0~~~~\mbox{for any}~~~~\varphi \in T_{u}S_a.$$
Then $u$ is a constrained critical point for the functional $P_{\mu}$ on $S_a$, and hence by the Lagrange multipliers rule there exists $\nu \in \mathbb{R}$ such that
$$  4{\Delta}^{2}u+2\mu \Delta u-\nu u=2p\gamma_p{|u|}^{p-2}u~~~~\mbox{in}~~~~\mathbb{R}^{N}. $$
But, by Proposition \ref{pro3.01}, this implies that $8||\Delta u||_{2}^{2}-2\mu||\nabla u||_{2}^{2}-4p \gamma_{p}^{2}||u||_{p}^{p}=0$,
that is $u \in \mathcal{P}_{0}^{a, \mu}$, a contradiction.  \\
\end{proof}

The manifold $\mathcal{P}_{a, \mu}$ is then divided into its two components $\mathcal{P}_{+}^{a, \mu}$ and $\mathcal{P}_{-}^{a, \mu}$, having disjoint closure. We can prove that $\mathcal{P}_{a, \mu}$ is a natural constraint, in the following sense:
\begin{lemma}\label{lemma1.3}
Let $N\geq2$, $\overline{p}<p<4^*$, $a>0$, $\mu>0$ and ${\mu}^{p\gamma_p-2}{a}^{p-2}<\tilde{C}(N,p)$.
If $u\in \mathcal{P}_{a, \mu}$ is a critical point for $E_{\mu}|_{\mathcal{P}_{a, \mu}}$, then $u$ is a critical point for $E_{\mu}|_{S_a}$. 
\end{lemma}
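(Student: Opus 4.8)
The plan is to apply the Lagrange multiplier rule on the codimension-$2$ manifold $\mathcal{P}_{a,\mu}$ and then to show that the multiplier attached to the Pohozaev constraint $P_\mu$ is forced to vanish. Suppose $u\in\mathcal{P}_{a,\mu}$ is a critical point of $E_\mu|_{\mathcal{P}_{a,\mu}}$. Writing $\mathcal{P}_{a,\mu}=\{u\in H^{2}(\R^N):G(u)=0,\ P_\mu(u)=0\}$ with $G(u)=\|u\|_2^2-a^2$, Lemma \ref{lem3.3} guarantees that $\mathcal{P}_{a,\mu}$ is a smooth manifold, equivalently that $dG(u)$ and $dP_\mu(u)$ are linearly independent in $H^{-2}(\R^N)$. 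Hence the Lagrange multiplier theorem furnishes $\lambda,\nu\in\R$ with
\begin{equation*}
E_\mu'(u)=\lambda\,G'(u)+\nu\,P_\mu'(u)\qquad\text{in }H^{-2}(\R^N).
\end{equation*}
If I can prove $\nu=0$, then $E_\mu'(u)=\lambda\,G'(u)$, which is exactly the assertion that $u$ is a critical point of $E_\mu|_{S_a}$, and the lemma follows.

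To force $\nu=0$ I would exploit the scaling structure recorded in (\ref{equa2.9}). Pairing the Euler--Lagrange identity with the mass-preserving dilation $s\star u$ and differentiating at $s=0$ yields three contributions: $\tfrac{d}{ds}E_\mu(s\star u)\big|_{s=0}=(\Psi_u^\mu)'(0)=P_\mu(u)=0$ since $u\in\mathcal{P}_{a,\mu}$; $\tfrac{d}{ds}G(s\star u)\big|_{s=0}=0$ because the $L^2$-mass is invariant under $s\star\cdot$; and $\tfrac{d}{ds}P_\mu(s\star u)\big|_{s=0}=(\Psi_u^\mu)''(0)$. Substituting these into the multiplier relation collapses it to $0=\nu\,(\Psi_u^\mu)''(0)$. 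The hypothesis ${\mu}^{p\gamma_p-2}a^{p-2}<\tilde{C}(N,p)$ gives $\mathcal{P}_{0}^{a,\mu}=\emptyset$ by Lemma \ref{lem3.3}, so every $u\in\mathcal{P}_{a,\mu}$ lies in $\mathcal{P}_{+}^{a,\mu}\cup\mathcal{P}_{-}^{a,\mu}$, where by definition $(\Psi_u^\mu)''(0)\neq0$. Therefore $\nu=0$.

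The one delicate point is the justification of the pairing $\langle E_\mu'(u),\tfrac{N}{2}u+x\cdot\nabla u\rangle=(\Psi_u^\mu)'(0)$, since the dilation generator $\tfrac{N}{2}u+x\cdot\nabla u$ need not belong to $H^{2}(\R^N)$ for an arbitrary $u\in H^{2}(\R^N)$, so a naive chain rule is not immediately licensed. I would make this rigorous by passing directly to the PDE: the multiplier identity says that $u$ is a weak solution of
\begin{equation*}
(1-4\nu)\Delta^{2}u+(1-2\nu)\mu\Delta u-2\lambda u=(1-2\nu p\gamma_p)|u|^{p-2}u,
\end{equation*}
and the Pohozaev identity for this equation---derived exactly as in Proposition \ref{pro3.01}, whose proof depends on the coefficients only linearly---reads $2(1-4\nu)\|\Delta u\|_2^2-(1-2\nu)\mu\|\nabla u\|_2^2-2\gamma_p(1-2\nu p\gamma_p)\|u\|_p^p=0$. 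A direct regrouping shows that the left-hand side decomposes as $P_\mu(u)-\nu\,(\Psi_u^\mu)''(0)$, where $(\Psi_u^\mu)''(0)=8\|\Delta u\|_2^2-2\mu\|\nabla u\|_2^2-4p\gamma_p^{2}\|u\|_p^p$. Since $P_\mu(u)=0$, this recovers $\nu\,(\Psi_u^\mu)''(0)=0$ rigorously, and $\nu=0$ follows as before.

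In summary, the genuine content of the statement is the non-degeneracy $(\Psi_u^\mu)''(0)\neq0$ supplied by $\mathcal{P}_{0}^{a,\mu}=\emptyset$; once that is in hand the argument is the routine bookkeeping of a two-constraint Lagrange rule together with the elimination of the $P_\mu$-multiplier along the $L^2$-invariant fiber. The step I expect to be decisive---and the only one where care is needed---is precisely the transition from the formal dilation pairing to the Pohozaev identity of the equation satisfied by $u$, which both legitimises the computation and isolates the second-derivative factor whose sign is controlled by the smallness condition ${\mu}^{p\gamma_p-2}a^{p-2}<\tilde{C}(N,p)$.
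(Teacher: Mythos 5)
Your proposal is correct and, in its rigorous form, is exactly the paper's proof: both apply the Lagrange multiplier rule on the codimension-2 manifold $\mathcal{P}_{a,\mu}$ (using Lemma \ref{lem3.3}), pass to the PDE satisfied by $u$, apply the Pohozaev identity of Proposition \ref{pro3.01} to that equation, and subtract $P_{\mu}(u)=0$ to obtain $\nu\,(\Psi_u^{\mu})''(0)=0$, whence $\nu=0$ because $\mathcal{P}_{0}^{a,\mu}=\emptyset$. The dilation-pairing discussion you include is only heuristic motivation; the decisive regrouping you then carry out coincides with the paper's computation.
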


\begin{proof}
We recall that by Lemma \ref{lem3.3}, $\mathcal{P}_{a, \mu}$ is a smooth manifold of codimension $2$ in $H^2$, and its subset $\mathcal{P}_0^{a, \mu}$ is empty. If $u \in \mathcal{P}_{a, \mu}$ is a critical point for $E_{\mu}|_{\mathcal{P}_{a, \mu}}$, then by the Lagrange multipliers rule there exists $\lambda, \nu \in \mathbb{R}$ such that
$$
d E_{\mu}(u)[\varphi]-\lambda \int_{\mathbb{R}^{N}} u {\varphi}-\nu d P_{\mu}(u)[\varphi]=0,~~~~\forall \varphi \in H^2.
$$
That is $(1-4\nu){\Delta}^{2}u+\mu(1-2\nu)\Delta u-\lambda u+(2p\gamma_p\nu-1){|u|}^{p-2}u=0$ in $\mathbb{R}^{N}$. But, by the Pohozaev identity Proposition \ref{pro3.01}, this implies that
$$
2(1-4\nu)||\Delta u||_{2}^{2}-\mu(1-2\nu)||\nabla u||_{2}^{2}+2\gamma_p(2p\gamma_p\nu-1)||u||_{p}^{p}=0.
$$
Since $u \in \mathcal{P}_{a, \mu}$, we have $\nu(4||\Delta u||_{2}^{2}-\mu||\nabla u||_{2}^{2}-2p{\gamma_p^2}||u||_{p}^{p})=0$.
But the term inside the bracket cannot be $0$, since $u \notin \mathcal{P}_{0}^{a, \mu}$, and then necessarily $\nu=0$.\\
\end{proof}

Next, we study the fiber maps $\Psi_{u}^{\mu}(s)$ and determine the location and types of critical points for $E_{\mu}|_{S_a}$.
Let us consider the constrained functional $E_{\mu}|_{S_{a}}$. From (\ref{eq2.3}), we have
\begin{equation*}
 E_{\mu}(u)\geq \frac{1}{2}{||\Delta u||}_2^2-\frac{\mu a}{2}{||\Delta u||}_2-\frac{C_{N,p}^p}{p}a^{p(1-\gamma_p)}{||\Delta u||}_2^{p\gamma_p}, ~~~~\forall u\in S_a.
\end{equation*}
Therefore, to understand the geometry of the functional $E_{\mu}|_{S_{a}}$ it is useful to consider the function $h : \mathbb{R}^{+} \rightarrow \mathbb{R}$:
$$h(t)=\frac{1}{2}t^2-\frac{\mu a}{2}t-\frac{C_{N,p}^p}{p}a^{p(1-\gamma_p)}t^{p\gamma_p}.$$
Since $\mu\!>\!0$ and $p\gamma_p\!>\!2$ for $\overline{p}\!<\!p\!<\!4^*$, we have that $h(0^+)\!=\!0^{-}$ and $h(+\infty)\!=\!-\infty $. 

\begin{lemma}\label{lem3.1}
Let $N\!\geq\!2$, $\overline{p}\!<\!p\!<\!4^*$, $a,\mu\!>\!0$ and
${\mu}^{p\gamma_p-2}{a}^{p-2}\!<\!\tilde{C}(N,p)$. Then the function $h$ has a local strict minimum at negative level and a global strict maximum at positive level. Moreover, there exist $R_0$ and $R_1$, both depending
on $a$ and $\mu$, such that
$$0 <\mu a< R_0 <\bar{t}< R_1,~~~~~~~~h(R_0)=0=h(R_1)$$
and $h(t)\!>\!0$ if only if $t\!\in\! (R_0,R_1)$. Here $\bar{t}\!=\!{\Big[ \frac{p}{2(p\gamma_p-1)C_{N,p}^pa^{p(1-\gamma_p)}} \Big]}^{\frac{1}{p\gamma_p-2}}$. 
\end{lemma}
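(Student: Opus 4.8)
The plan is to factor out a power of $t$ so that the three-term function $h$ is reduced to a function whose derivative is monotone. Writing $\kappa:=\frac{C_{N,p}^p}{p}a^{p(1-\gamma_p)}>0$, for $t>0$ I factor
\[
h(t)=t\,\varphi(t),\qquad \varphi(t):=\frac12 t-\frac{\mu a}{2}-\kappa\,t^{p\gamma_p-1}.
\]
Since $t>0$, the sign of $h$ coincides with that of $\varphi$ and the positive zeros of $h$ are exactly those of $\varphi$; this factorization is the device that turns the study of $h$ into the analysis of a function with a strictly monotone derivative.

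Next I analyse $\varphi$ through $\varphi'(t)=\frac12-\kappa(p\gamma_p-1)t^{p\gamma_p-2}$. Because $p\gamma_p>2$ for $\overline{p}<p<4^*$, the map $t\mapsto t^{p\gamma_p-2}$ is strictly increasing, so $\varphi'$ is strictly decreasing with $\varphi'(0^+)=\frac12>0$ and $\varphi'(+\infty)=-\infty$. Hence $\varphi'$ vanishes at the single point
\[
\bar t=\Big[\frac{1}{2\kappa(p\gamma_p-1)}\Big]^{\frac{1}{p\gamma_p-2}}=\Big[\frac{p}{2(p\gamma_p-1)C_{N,p}^pa^{p(1-\gamma_p)}}\Big]^{\frac{1}{p\gamma_p-2}},
\]
and $\varphi$ is strictly increasing on $(0,\bar t)$ and strictly decreasing on $(\bar t,+\infty)$, so it attains its unique strict global maximum at $\bar t$. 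Since $\varphi(0^+)=-\frac{\mu a}{2}<0$ and $\varphi(+\infty)=-\infty$, whether $\varphi$ (and thus $h$) ever becomes positive is decided entirely by the sign of $\varphi(\bar t)$.

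The crux is therefore to show that the hypothesis ${\mu}^{p\gamma_p-2}{a}^{p-2}<\tilde{C}(N,p)$ is exactly equivalent to $\varphi(\bar t)>0$. Using $\kappa(p\gamma_p-1)\bar t^{p\gamma_p-2}=\frac12$, hence $\kappa\bar t^{p\gamma_p-1}=\frac{\bar t}{2(p\gamma_p-1)}$, I obtain
\[
\varphi(\bar t)=\frac12\bar t-\frac{\mu a}{2}-\frac{\bar t}{2(p\gamma_p-1)}=\frac{\bar t}{2}\cdot\frac{p\gamma_p-2}{p\gamma_p-1}-\frac{\mu a}{2}.
\]
Thus $\varphi(\bar t)>0$ iff $\bar t>\mu a\,\frac{p\gamma_p-1}{p\gamma_p-2}$, i.e., raising to the power $p\gamma_p-2$ and substituting the value of $\bar t^{p\gamma_p-2}$, iff
\[
\frac{p}{2(p\gamma_p-1)C_{N,p}^p}\Big(\frac{p\gamma_p-2}{p\gamma_p-1}\Big)^{p\gamma_p-2}>\mu^{p\gamma_p-2}a^{p\gamma_p-2}a^{p(1-\gamma_p)}=\mu^{p\gamma_p-2}a^{p-2},
\]
which is precisely ${\mu}^{p\gamma_p-2}{a}^{p-2}<\tilde{C}(N,p)$. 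The only delicate point here is the bookkeeping of the powers of $a$, which collapse via $a^{p\gamma_p-2}a^{p(1-\gamma_p)}=a^{p-2}$ and make the threshold come out to be exactly $\tilde C(N,p)$; I expect this (purely computational) verification to be the main obstacle, the remainder being soft monotonicity arguments.

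Finally I translate the picture back to $h$. When $\varphi(\bar t)>0$, the strict monotonicity of $\varphi$ forces exactly two zeros $R_0<\bar t<R_1$, with $\varphi<0$ on $(0,R_0)\cup(R_1,+\infty)$ and $\varphi>0$ on $(R_0,R_1)$; hence $h(R_0)=R_0\varphi(R_0)=0=R_1\varphi(R_1)=h(R_1)$ and $h(t)>0$ iff $t\in(R_0,R_1)$. To locate $\mu a$ I evaluate $\varphi(\mu a)=-\kappa(\mu a)^{p\gamma_p-1}<0$; since $\varphi(\bar t)>0$ gives $\mu a<\bar t\,\frac{p\gamma_p-2}{p\gamma_p-1}<\bar t$ and $\varphi$ is strictly increasing on $(0,\bar t)$, its zero $R_0$ satisfies $\mu a<R_0<\bar t$, so $0<\mu a<R_0<\bar t<R_1$. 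A local strict minimum at negative level now follows because $h(0)=h(R_0)=0$ while $h<0$ on $(0,R_0)$, so the minimum of $h$ on $[0,R_0]$ is attained at an interior point with negative value; likewise $h(R_0)=h(R_1)=0$ with $h>0$ on $(R_0,R_1)$ produces an interior maximizer with positive value, which is the global maximum of $h$ on $(0,+\infty)$ because $h\le0$ off $(R_0,R_1)$. Strictness and uniqueness of these extrema follow from the real analyticity of $h$ (equivalently, from checking that $h'$ has exactly two positive zeros), so the critical points are isolated.
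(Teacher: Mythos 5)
Your core argument is correct and is essentially the paper's own proof. You factor $h(t)=t\varphi(t)$ (the paper performs the identical reduction, writing $h(t)>0\Leftrightarrow\varphi(t)>\frac{\mu a}{2}$ with the constant kept on the right rather than inside $\varphi$); you then use that $\varphi'$ is strictly decreasing because $p\gamma_p>2$, find the same unique maximizer $\bar t$, and verify with the same exponent bookkeeping $a^{p\gamma_p-2}a^{p(1-\gamma_p)}=a^{p-2}$ that $\varphi(\bar t)>0$ is equivalent to ${\mu}^{p\gamma_p-2}a^{p-2}<\tilde C(N,p)$. The existence of exactly two zeros $R_0<\bar t<R_1$ of $\varphi$, the sign of $h$, and the ordering $0<\mu a<R_0$ (via $\varphi(\mu a)<0$ and monotonicity on $(0,\bar t)$) are all sound and match the paper.

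The one genuine gap is your final step. Real analyticity of $h$ on $(0,+\infty)$ does give isolated critical points and strictness of each interior extremum, but it does \emph{not} give uniqueness: an analytic function can have two global maximizers at the same level (e.g.\ $-(t^2-1)^2$), so the assertion that ``uniqueness of these extrema follow from the real analyticity'' is false, and with it the conclusion that the maximum is a \emph{global strict} maximum. This is not cosmetic: later, Lemma \ref{lem3.8} speaks of ``the'' point $t_{\max}$, so a unique maximizer is actually needed. Your parenthetical alternative---checking that $h'$ has exactly two positive zeros---is the right statement, but you never carry it out, and this is exactly where the paper invests real work: it writes $h'(t)=0\Leftrightarrow\psi(t)=\frac{\mu a}{2}$ with $\psi(t)=t-\gamma_pC_{N,p}^pa^{p(1-\gamma_p)}t^{p\gamma_p-1}$, evaluates $\psi$ at its unique maximum $\hat t$, and must invoke the additional inequality $p\gamma_p<2^{p\gamma_p-1}$ to deduce $\psi(\hat t)>\frac{\mu a}{2}$ from the hypothesis, because the natural threshold for $\psi$ is a \emph{different, larger} constant than $\tilde C(N,p)$ (this is the paper's display (\ref{eq3.2})). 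A cheaper repair of your argument, avoiding that comparison altogether: $h''(t)=1-\gamma_p(p\gamma_p-1)C_{N,p}^pa^{p(1-\gamma_p)}t^{p\gamma_p-2}$ is strictly decreasing, so $h'$ is strictly concave on $(0,+\infty)$ and has at most two zeros; combined with the interior minimizer and maximizer you already produced, $h$ has exactly two critical points, which yields both strictness and uniqueness at once.
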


\begin{proof}
For $t>0$, it is easy to see that $h(t)>0$ if and only if
$$ \varphi(t)>\frac{\mu a}{2},~~~~\mbox{with}~~~~\varphi(t)=\frac{1}{2}t
-\frac{C_{N,p}^p}{p}a^{p(1-\gamma_p)}t^{p\gamma_p-1}.$$
Also, $\varphi$ has a unique critical point on $(0,+\infty)$, which is a global maximum point at positive level,
in $\bar{t}={\Big[ \frac{p}{2(p\gamma_p-1)C_{N,p}^pa^{p(1-\gamma_p)}} \Big]}^{\frac{1}{p\gamma_p-2}}$,
and the maximum level is $\varphi(\overline{t})=\frac{p\gamma_p-2}{2(p\gamma_p-1)}\overline{t}$.

Notice that
$$ {\mu}^{p\gamma_p-2}{a}^{p-2}<\tilde{C}(N,p) \Longleftrightarrow \varphi(\overline{t})>\frac{\mu a}{2}.$$
Therefore, $h$ is positive on an open interval $(R_0,R_1)$ if only if ${\mu}^{p\gamma_p-2}{a}^{p-2}<\tilde{C}(N,p)$. It follows immediately that $h$ has a global maximum at positive level in $(R_0,R_1)$. Moreover, since $h(0^+)=0^{-}$, there exists a local minimum point at negative level in $(0,R_0)$. We also observe that $0 <\mu a< R_0 <\bar{t}< R_1$. The fact that $h$ has
no other critical points can be verified observing that $h'(t)=0$ if only if
$$
\psi(t)=\frac{\mu a}{2}, ~~~~\mbox{with}~~~~\psi(t)=t-
{\gamma_p}{C_{N,p}^p}{a^{p(1-\gamma_p)}}t^{p\gamma_p-1}.
$$
Clearly $\psi$ has only one critical point at $\hat{t}={\Big[ \frac{1}{\gamma_p(p\gamma_p-1)C_{N,p}^pa^{p(1-\gamma_p)}} \Big]}^{\frac{1}{p\gamma_p-2}}$, which is a strict maximum and
$\psi(\hat{t})=\frac{p\gamma_p-2}{p\gamma_p-1}\hat{t}$. 
Moreover, we have
\begin{equation} \label{eq3.2}
\psi(\hat{t})>\frac{\mu a}{2} \Longleftrightarrow {\mu}^{p\gamma_p-2}{a}^{p-2}<\frac{1}{\gamma_p(p\gamma_p-1)C_{N,p}^p}{\Big( \frac{2(p\gamma_p-2)}{p\gamma_p-1} \Big)}^{p\gamma_p-2}.
\end{equation}
From $p\gamma_p<2^{p\gamma_p-1}$ and ${\mu}^{p\gamma_p-2}{a}^{p-2}<\tilde{C}(N,p)$, we can check that (\ref{eq3.2}) holds.
\end{proof}


\begin{lemma}\label{lem3.4}
Let $N\!\geq\!2$, $\overline{p}\!<\!p\!<\!4^*$, $a,\mu\!>\!0$ and ${\mu}^{p\gamma_p-2}{a}^{p-2}\!<\!\tilde{C}(N,p)$.
For every $u \!\in\! S_{a}$, the function $\Psi_{u}^{\mu}$ has exactly two critical points $s_{u}\!<\!t_{u} \!\in\! \mathbb{R}$ and two zeros $c_{u}\!<\!d_{u} \!\in\! \mathbb{R}$, with $s_{u}\!<\!c_{u}\!<\!t_{u}\!<\!d_{u}$. Moreover:\\
$(1)$ $s_{u} \star u \in \mathcal{P}_{+}^{a,\mu}$ and $t_{u} \star u \in \mathcal{P}_{-}^{a,\mu}$, and if $s \star u \in \mathcal{P}_{a,\mu}$, then either $s=s_{u}$ or $s=t_{u}$; \\
$(2)$ $||\Delta(s \star u)||_{2} \leq R_{0}$ for every $s \leq c_{u}$, and
$$E_{\mu}\left(s_{u} \star u\right)=\min \left\{E_{\mu}(s \star u) : s \in \mathbb{R} \text { and }||\Delta(s \star u)||_{2}<R_{0}\right\}<0.$$
$(3)$ We have
$$ E_{\mu}\left(t_{u} \star u\right)=\max \{E_{\mu}(s \star u) : s \in \mathbb{R}\}>0,$$
and $\Psi^{\mu}_{u}$ is strictly decreasing and concave on $\left(t_{u},+\infty\right)$.  \\
$(4)$ The maps $u \in S_{a} \mapsto s_{u} \in \mathbb{R}$ and $u \in S_{a} \mapsto t_{u} \in \mathbb{R}$ are of class $C^1$.\\
\end{lemma}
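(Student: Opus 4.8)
The plan is to reduce the whole statement to a one‑variable study of $\Psi_u^\mu$, exploiting the comparison with the function $h$ of Lemma~\ref{lem3.1}. First I would record the behaviour at the ends of the fiber: since $p\gamma_p>2$, the three exponents in \eqref{eq1.13} satisfy $2<4<2p\gamma_p$, so as $s\to-\infty$ the term $-\tfrac{\mu}{2}e^{2s}\|\nabla u\|_2^2$ dominates, giving $\lim_{s\to-\infty}\Psi_u^\mu(s)=0^-$ with $(\Psi_u^\mu)'(s)<0$ for $s$ very negative, while as $s\to+\infty$ the term $-\tfrac1p e^{2p\gamma_p s}\|u\|_p^p$ dominates and $\Psi_u^\mu(s)\to-\infty$.

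The crucial step is to produce a point where $\Psi_u^\mu>0$. Setting $t=\|\Delta(s\star u)\|_2=e^{2s}\|\Delta u\|_2$ and applying the interpolation inequality \eqref{eqa2.3.1} to the gradient term and the Gagliardo--Nirenberg inequality \eqref{2.9} to the $L^p$ term, one obtains termwise the bound $\Psi_u^\mu(s)\ge h\!\left(e^{2s}\|\Delta u\|_2\right)$ for all $s$. By Lemma~\ref{lem3.1}, the hypothesis $\mu^{p\gamma_p-2}a^{p-2}<\tilde C(N,p)$ forces $h>0$ on $(R_0,R_1)$; choosing $s^\ast$ with $e^{2s^\ast}\|\Delta u\|_2=\bar t$ then gives $\Psi_u^\mu(s^\ast)\ge h(\bar t)>0$. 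Hence $\Psi_u^\mu$ is negative near $-\infty$, strictly positive somewhere, and tends to $-\infty$, so it must have at least one local minimum followed by a local maximum and at least two zeros, interlaced as $s_u<c_u<t_u<d_u$.

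To pin the count at exactly two critical points I would substitute $\tau=e^{2s}$ into $(\Psi_u^\mu)'(s)=0$ and divide by $\tau$, so that critical points correspond to the roots of $g(\tau)=\mu\|\nabla u\|_2^2$, where $g(\tau)=2\|\Delta u\|_2^2\,\tau-2\gamma_p\|u\|_p^p\,\tau^{p\gamma_p-1}$ satisfies $g(0)=0$, $g(+\infty)=-\infty$ and has a unique interior maximum (as $p\gamma_p-1>1$); hence $g=\mu\|\nabla u\|_2^2>0$ has at most two roots, and combined with the previous paragraph there are exactly two, $s_u<t_u$. Since $\mathcal P_0^{a,\mu}=\emptyset$ by Lemma~\ref{lem3.3}, the second derivative cannot vanish at a critical point, so the local minimum gives $(\Psi_u^\mu)''(s_u)>0$, i.e.\ $s_u\star u\in\mathcal P_+^{a,\mu}$, and the local maximum gives $t_u\star u\in\mathcal P_-^{a,\mu}$; uniqueness then yields the dichotomy ``$s=s_u$ or $s=t_u$'' of (1), and the sign pattern of $\Psi_u^\mu$ along the fiber produces exactly the two interlaced zeros.

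For the refinements: whenever $\Psi_u^\mu(s)\le0$ the comparison forces $h(e^{2s}\|\Delta u\|_2)\le0$, hence $e^{2s}\|\Delta u\|_2\notin(R_0,R_1)$; as this quantity is continuous, increasing and tends to $0$ as $s\to-\infty$, it must stay $\le R_0$ for all $s\le c_u$, which is the bound in (2) and identifies $s_u$ as the minimiser of $E_\mu$ over $\{\,\|\Delta\cdot\|_2<R_0\,\}$, with value $<\lim_{s\to-\infty}\Psi_u^\mu(s)=0$. In (3), $t_u$ is the unique local and hence global maximum with $\Psi_u^\mu(t_u)>0$; for the concavity I would apply the same substitution to $(\Psi_u^\mu)''$, which is negative near $\pm\infty$ and changes sign at most twice, and note that $(\Psi_u^\mu)''(s_u)>0$ while $(\Psi_u^\mu)''(t_u)<0$ with $s_u<t_u$ place $t_u$ beyond the last sign change, so $(\Psi_u^\mu)''<0$ on all of $(t_u,+\infty)$. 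Finally (4) follows from the implicit function theorem applied to $(\Psi_u^\mu)'(s)=0$, since $(\Psi_u^\mu)''(s_u)\neq0\neq(\Psi_u^\mu)''(t_u)$ and the coefficients $\|\Delta u\|_2^2,\|\nabla u\|_2^2,\|u\|_p^p$ depend smoothly on $u\in S_a$. The main difficulty is precisely the positivity step of the second paragraph: it is the comparison with $h$ --- hence the constant condition $\mu^{p\gamma_p-2}a^{p-2}<\tilde C(N,p)$ --- that guarantees the maximum‑type critical point $t_u$ and the $R_0$‑localisation of $s_u$, and getting the comparison together with the correct interlacing of critical points and zeros is the technical heart of the argument.
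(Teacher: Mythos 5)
Your proposal is correct and follows essentially the same route as the paper's proof: comparison of $\Psi_u^{\mu}$ with $h$ from Lemma \ref{lem3.1} for the end behaviour and the positivity window, reduction of $(\Psi_u^{\mu})'=0$ to a unimodal one-variable equation to control the number of critical points, the emptiness of $\mathcal{P}_{0}^{a,\mu}$ (Lemma \ref{lem3.3}) to place $s_u\star u$ and $t_u\star u$ in $\mathcal{P}_{+}^{a,\mu}$ and $\mathcal{P}_{-}^{a,\mu}$, and the implicit function theorem for (4). The only (harmless) deviation is that you close the count by ``at most two from unimodality plus at least two from the geometry'', while the paper instead verifies quantitatively that the maximum of the reduced function exceeds the level $\mu\|\nabla u\|_2^2$ via Gagliardo--Nirenberg, interpolation and the condition $\mu^{p\gamma_p-2}a^{p-2}<\tilde C(N,p)$; both arguments are valid.
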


\begin{proof}
Let $u\!\in\! S_a$, as observed in Corollary \ref{coroll2.1}, $s\star u \!\in\! \mathcal{P}_{a,\mu}$ if and only if $(\Psi_{u}^{\mu})^{\prime}(s)\!=\!0$. Thus,
we first show that $\Psi_{u}^{\mu}$ has at least two critical points. To this end, we recall that by (\ref{eq2.3})
$$\Psi_{u}^{\mu}(s)=E_{\mu}(s \star u) \geq h\left(||\Delta(s \star u)||_{2}\right)=h\left(e^{2s}||\Delta u||_{2}\right).$$
Thus, the $C^2$ function $\Psi_{u}^{\mu}$ is positive on $\left(\frac{1}{2}\log \frac{R_{0}}{||\Delta u||_{2}}, \frac{1}{2}\log \frac{R_{1}}{||\Delta u||_{2}}\right)$, and clearly $\Psi_{u}^{\mu}(-\infty)=0^{-}$, $\Psi_{u}^{\mu}(+\infty)=-\infty$. It follows that $\Psi_{u}^{\mu}$ has at least two critical points $s_u<t_u$, with $s_u$ local minimum point on $(-\infty, \frac{1}{2}\log \frac{R_{0}}{||\Delta u||_{2}})$ at negative level, and $t_u > s_u$ global maximum point at positive level.
It is not difficult to check that there are no other critical points. Indeed,  $(\Psi_{u}^{\mu})^{\prime}(s)=0$ reads
$$\varphi(s)=\mu||\nabla u|||_{2}^{2},~~~~\mbox{with}~~~~ \varphi(s)=2e^{2s}||\Delta u||_{2}^{2}-2\gamma_{p}e^{2(p\gamma_{p} -1) s}||u||_{p}^{p}. $$
But $\varphi$ has a unique maximum point at $\bar{s}$ with $ e^{\bar{s}}=\Big[ \frac{||\Delta u||_{2}^{2}}{   {\gamma_{p}}(p\gamma_{p}-1) ||u||_{p}^{p}  } \Big]^{\frac{1}{2(p\gamma_{p}-2)}}$. By the Gagliardo-Nirenberg inequality \eqref{2.9} and ${\mu}^{p\gamma_p-2}{a}^{p-2}<\tilde{C}(N,p)$, we deduce that
$$
\varphi(\bar{s}) \geq \frac{2(p\gamma_p-2)}{p\gamma_p-1} {\Big[ \frac{1}{\gamma_p(p\gamma_p-1)C_{N,p}^pa^{p(1-\gamma_p)}} \Big]}^{\frac{1}{p\gamma_p-2}} ||\Delta u||_{2} >\mu a||\Delta u||_{2}\geq \mu||\nabla u|||_{2}^{2}.
$$
That is $\varphi(\bar{s})\!>\!\mu||\nabla u|||_{2}^{2}$, so $\Psi_{u}^{\mu}$ has exactly two critical points. By Corollary \ref{coroll2.1}, we have $s_{u} \star u$, $t_{u} \star u \!\in\! \mathcal{P}_{a,\mu}$ and $s \star u \!\in\! \mathcal{P}_{a,\mu}$ implies $s \!\in\!\left\{s_{u}, t_{u}\right\}$. By minimality $(\Psi_{s_{u} \star u}^{\mu})^{\prime \prime}(0)\!=\!(\Psi_{u}^{\mu})^{\prime \prime}\left(s_{u}\right) \!\geq \!0$, and in fact strict inequality must hold, since $\mathcal{P}_{0}^{a,\mu}=\emptyset$; namely $s_{u} \star u \in \mathcal{P}_{+}^{a,\mu}$. In the same way $t_{u} \star u \in \mathcal{P}_{-}^{a,\mu}$.

By monotonicity and recalling the behavior at infinity, $\Psi_{u}^{\mu}$ has moreover exactly two zeros $c_{u}\!<\!d_{u}$, with $s_{u}\!<\!c_{u}\!<\!t_{u}\!<\!d_{u}$. Being a $C^2$ function, $\Psi_{u}^{\mu}$ has at least two inflection points. Arguing as before, we can easily check that $\Psi_{u}^{\mu}$ has exactly two inflection points. In particular, $\Psi_{u}^{\mu}$ is concave on $[t_u,+\infty)$.

It remains to show that $u \mapsto s_{u}$ and $u \mapsto t_{u}$ are of class $C^1$. To this end, we apply the implicit function theorem on the $C^1$ function $\Phi(s, u) :=(\Psi_{u}^{\mu})^{\prime}(s)$. We use $\Phi\left(s_{u}, u\right)=0$, $\partial_{s} \Phi\left(s_{u}, u\right)=(\Psi_{u}^{\mu})^{\prime \prime}\left(s_{u}\right)>0$, and the fact that it is not possible to pass with continuity from $\mathcal{P}_{+}^{a,\mu}$ to $\mathcal{P}_{-}^{a,\mu}$ (since $\mathcal{P}_{0}^{a,\mu}=\emptyset$). The same argument proves that $u \mapsto t_{u}$ is $C^{1}$.

\end{proof}

Recall that $S_{a,r}=S_a\cap H^2_{rad}$. For $k>0$, let us set
\begin{equation} \label{equ2.13}
A_{k} :=\left\{u \in S_a :||\Delta u||_{2}<k\right\},~~~~~~~~A_{k}^r :=\left\{u \in S_{a,r}  :||\Delta u||_{2}<k\right\},
\end{equation}
and introduce two minimization problems
\begin{equation} \label{equ2.14}
 m(a,\mu):=\inf _{u \in A_{R_{0}}} E_{\mu}(u),~~~~~~~~m_r(a,\mu):=\inf _{u \in A_{R_{0}^r}} E_{\mu}(u).
\end{equation}

\begin{corollary} \label{coro3.5}
Let $N\!\geq\!2$, $\overline{p}\!<\!p\!<\!4^*$, $a,\mu\!>\!0$ and ${\mu}^{p\gamma_p-2}{a}^{p-2}\!<\!\tilde{C}(N,p)$. Then, we have  $$\mathcal{P}_{+}^{a,\mu} \!\subset \! A_{R_{0}}\!:=\!\left\{u \in S_a :||\Delta u||_{2}<R_{0}\right\}~~~~\mbox{and}~~~~\sup _{\mathcal{P}_{+}^{a,\mu}} E_{\mu} \!\leq\! 0 \!\leq\!\inf _{\mathcal{P}_{-}^{a,\mu}} E_{\mu}. $$
\end{corollary}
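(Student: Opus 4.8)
The plan is to deduce both statements directly from the fiber-map analysis of Lemma \ref{lem3.4}, exploiting the fact that membership in $\mathcal{P}_{a,\mu}$ is precisely the condition that $s=0$ be a critical point of the associated fiber map. Concretely, fix $u \in \mathcal{P}_{a,\mu}$. By Corollary \ref{coroll2.1}, $s=0$ is a critical point of $\Psi_{u}^{\mu}$, and by Lemma \ref{lem3.4} the only critical points of $\Psi_{u}^{\mu}$ are $s_{u}<t_{u}$, with $s_{u}\star u \in \mathcal{P}_{+}^{a,\mu}$ and $t_{u}\star u \in \mathcal{P}_{-}^{a,\mu}$. Hence $0 \in \{s_{u}, t_{u}\}$, and since $\mathcal{P}_{+}^{a,\mu}$ and $\mathcal{P}_{-}^{a,\mu}$ are disjoint, the membership of $u = 0 \star u$ forces $0 = s_{u}$ when $u \in \mathcal{P}_{+}^{a,\mu}$ and $0 = t_{u}$ when $u \in \mathcal{P}_{-}^{a,\mu}$ (otherwise $u$ would lie in both components, a contradiction).

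For the inclusion $\mathcal{P}_{+}^{a,\mu} \subset A_{R_0}$, I would use the location of the minimum point $s_{u}$ provided in the proof of Lemma \ref{lem3.4}, namely $s_{u} \in \left(-\infty, \tfrac{1}{2}\log\frac{R_0}{||\Delta u||_2}\right)$. For $u \in \mathcal{P}_{+}^{a,\mu}$ this reads $0 = s_{u} < \tfrac{1}{2}\log\frac{R_0}{||\Delta u||_2}$, which is equivalent to $||\Delta u||_2 < R_0$; equivalently, one notes $||\Delta(s \star u)||_2 = e^{2s}||\Delta u||_2$ and that $s_{u}$ strictly precedes the threshold at which this norm reaches $R_0$. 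This gives the strict inclusion.

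For the energy inequalities, I would invoke parts $(2)$ and $(3)$ of Lemma \ref{lem3.4}. If $u \in \mathcal{P}_{+}^{a,\mu}$, then $0 = s_{u}$ and $E_{\mu}(u) = E_{\mu}(s_{u} \star u) = \min\{E_{\mu}(s \star u) : s \in \mathbb{R},\ ||\Delta(s \star u)||_2 < R_0\} < 0$; taking the supremum over $\mathcal{P}_{+}^{a,\mu}$ yields $\sup_{\mathcal{P}_{+}^{a,\mu}} E_{\mu} \leq 0$. Symmetrically, if $u \in \mathcal{P}_{-}^{a,\mu}$, then $0 = t_{u}$ and $E_{\mu}(u) = E_{\mu}(t_{u} \star u) = \max\{E_{\mu}(s \star u) : s \in \mathbb{R}\} > 0$, so $\inf_{\mathcal{P}_{-}^{a,\mu}} E_{\mu} \geq 0$. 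Combining the two bounds produces the chain $\sup_{\mathcal{P}_{+}^{a,\mu}} E_{\mu} \leq 0 \leq \inf_{\mathcal{P}_{-}^{a,\mu}} E_{\mu}$.

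The argument is essentially bookkeeping once Lemma \ref{lem3.4} is available, so there is no genuine analytic obstacle; the one point requiring care is upgrading the weak bound $||\Delta u||_2 \leq R_0$ (which would follow at once from the clause ``$||\Delta(s \star u)||_2 \leq R_0$ for $s \leq c_{u}$'' of Lemma \ref{lem3.4}$(2)$ together with $s_{u} < c_{u}$) to the strict inequality $||\Delta u||_2 < R_0$ demanded by the open set $A_{R_0}$. This is resolved by using the strict containment $s_{u} < \tfrac{1}{2}\log\frac{R_0}{||\Delta u||_2}$ rather than the closed bound at $c_{u}$. The standing hypothesis ${\mu}^{p\gamma_p-2}{a}^{p-2} < \tilde{C}(N,p)$ enters only implicitly here, since it is what guarantees (via Lemma \ref{lem3.1}) that $R_0 > 0$ is well defined with $h < 0$ on $(0, R_0)$ and that Lemma \ref{lem3.4} applies at all.
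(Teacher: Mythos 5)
Your proof is correct and takes essentially the same approach as the paper's, which likewise deduces the corollary directly from Lemma \ref{lem3.4} by identifying $s_{u}=0$ for $u\in\mathcal{P}_{+}^{a,\mu}$ and $t_{u}=0$ for $u\in\mathcal{P}_{-}^{a,\mu}$ and then reading off the norm bound and the signs of the energy from parts $(2)$ and $(3)$. Your extra care in upgrading $||\Delta u||_{2}\leq R_{0}$ to the strict inequality $||\Delta u||_{2}<R_{0}$, via the interior location $s_{u}<\tfrac{1}{2}\log\frac{R_{0}}{||\Delta u||_{2}}$ from the proof of Lemma \ref{lem3.4}, simply makes explicit a point the paper leaves implicit.
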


\begin{proof}
It is a direct conclusion of Lemma \ref{lem3.4}. Indeed, $\forall u \!\in\! \mathcal{P}_{+}^{a,\mu}$, Lemma \ref{lem3.4} implies that $s_{u}\!=\!0$, $E_{\mu}(u)\!\leq\!0$ and $||\Delta u||_{2}\!<\!R_{0}$. Similarly, $u \!\in\! \mathcal{P}_{-}^{a,\mu}$ implies that $t_{u}\!=\!0$ and $E_{\mu}(u)\!\geq\!0$.
\end{proof}

\begin{lemma} \label{lem3.6}
Let $N\!\geq\!2$, $\overline{p}\!<\!p\!<\!4^*$, $a,\mu\!>\!0$ and ${\mu}^{p\gamma_p-2}{a}^{p-2}\!<\!\tilde{C}(N,p)$. It results that
$$m(a, \mu) \in(-\infty, 0),~~~~~~~~m(a, \mu)=\inf _{\mathcal{P}_{a,\mu}} E_{\mu}=\inf _{\mathcal{P}_{+}^{a,\mu}} E_{\mu},$$
and
$$m_r(a, \mu) \in(-\infty, 0),~~~~~~~~m_r(a, \mu)=\inf _{\mathcal{P}_{a,\mu}\cap S_{a,r}} E_{\mu}=\inf _{\mathcal{P}_{+}^{a,\mu}\cap S_{a,r}} E_{\mu}.$$
Moreover, there exists a constant $\rho>0$ small enough such that
$$
m(a, \mu)<\min\{ \inf _{ \overline{{A_{\rho}}} } E_{\mu},~~~~\inf _{ \overline{{A_{R_{0}}} } \setminus A_{R_{0}-\rho}} E_{\mu}  \},
~~~~~~~~m_r(a, \mu)<\min\{ \inf _{ \overline{{A^r_{\rho}}} } E_{\mu},~~~~\inf _{ \overline{{A^r_{R_{0}}} } \setminus A^r_{R_{0}-\rho}} E_{\mu}  \}.
$$
\end{lemma}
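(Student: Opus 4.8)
The plan is to combine the fiber-map analysis of Lemma \ref{lem3.4} with the coercivity estimate \eqref{eq2.3} and the shape of the scalar function $h$ recorded in Lemma \ref{lem3.1}. I would treat only the non-radial problem $m(a,\mu)$ in detail, since the radial statements for $m_r(a,\mu)$ follow verbatim: the scaling $s\star u$ preserves radial symmetry, and \eqref{eq2.3} holds for every $u\in S_a$, so all the comparisons below restrict cleanly to $S_{a,r}$.

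First I would settle $m(a,\mu)\in(-\infty,0)$. For the lower bound, every $u\in A_{R_0}$ obeys $\|\Delta u\|_2<R_0$, so by \eqref{eq2.3} we have $E_\mu(u)\ge h(\|\Delta u\|_2)\ge \min_{[0,R_0]}h>-\infty$, whence $m(a,\mu)>-\infty$. For the upper bound, I pick any $u\in S_a$ and apply Lemma \ref{lem3.4}(2): the point $s_u\star u$ lies in $\mathcal{P}_+^{a,\mu}\subset A_{R_0}$ (Corollary \ref{coro3.5}) and satisfies $E_\mu(s_u\star u)<0$, so $m(a,\mu)\le E_\mu(s_u\star u)<0$.

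Next I would prove the two variational identities. Since $\mathcal{P}_+^{a,\mu}\subset A_{R_0}$, the inequality $m(a,\mu)\le \inf_{\mathcal{P}_+^{a,\mu}}E_\mu$ is immediate. For the reverse, take any $u\in A_{R_0}$; then $s=0$ is admissible in the minimization of Lemma \ref{lem3.4}(2), so $E_\mu(s_u\star u)\le E_\mu(0\star u)=E_\mu(u)$, and as $s_u\star u\in\mathcal{P}_+^{a,\mu}$ this gives $\inf_{\mathcal{P}_+^{a,\mu}}E_\mu\le E_\mu(u)$; taking the infimum over $u\in A_{R_0}$ yields $\inf_{\mathcal{P}_+^{a,\mu}}E_\mu\le m(a,\mu)$, hence equality. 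The remaining identity $\inf_{\mathcal{P}_{a,\mu}}E_\mu=\inf_{\mathcal{P}_+^{a,\mu}}E_\mu$ then follows from the splitting $\mathcal{P}_{a,\mu}=\mathcal{P}_+^{a,\mu}\cup\mathcal{P}_-^{a,\mu}$ (recall $\mathcal{P}_0^{a,\mu}=\emptyset$) together with the sign estimate $\sup_{\mathcal{P}_+^{a,\mu}}E_\mu\le 0\le\inf_{\mathcal{P}_-^{a,\mu}}E_\mu$ of Corollary \ref{coro3.5}: since $m(a,\mu)<0$, the infimum over $\mathcal{P}_{a,\mu}$ is realized on the $\mathcal{P}_+^{a,\mu}$ piece.

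Finally I would establish the separation from the two boundary shells, which is the crux of why $m(a,\mu)$ behaves like an interior local minimum, and which I expect to be the most delicate point. Again \eqref{eq2.3} reduces everything to $h$. On $\overline{A_\rho}$ one has $E_\mu\ge h$ evaluated on $[0,\rho]$; because $h'(0^+)=-\frac{\mu a}{2}<0$ (the term of order $t^{p\gamma_p-1}$ is negligible since $p\gamma_p-1>1$), $h$ is negative and decreasing on a right neighbourhood of $0$, so $\inf_{\overline{A_\rho}}E_\mu\ge h(\rho)\to 0^-$ as $\rho\to0^+$. On $\overline{A_{R_0}}\setminus A_{R_0-\rho}$ one has $\|\Delta u\|_2\in[R_0-\rho,R_0]$; since $h(R_0)=0$ and $h$ is increasing across $R_0$ as it passes from its negative local-minimum regime to the positive interval $(R_0,R_1)$, we get $\inf_{\overline{A_{R_0}}\setminus A_{R_0-\rho}}E_\mu\ge h(R_0-\rho)\to 0^-$ as $\rho\to0^+$. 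As $m(a,\mu)<0$ is a fixed negative number, choosing $\rho>0$ small enough that both $h(\rho)>m(a,\mu)$ and $h(R_0-\rho)>m(a,\mu)$ delivers the two strict inequalities. The only care needed is the monotonicity of $h$ on these shrinking intervals, which is read off directly from Lemma \ref{lem3.1}; with the prior lemmas in hand the whole argument is essentially routine.
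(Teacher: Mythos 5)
Your proposal is correct and follows essentially the same route as the paper: coercivity via \eqref{eq2.3} and $h$ for the lower bound, Lemma \ref{lem3.4}(2) plus Corollary \ref{coro3.5} for the negativity and the two variational identities, and the behavior of $h$ near $0$ and $R_0$ for the boundary separation. The only cosmetic difference is in the last step, where the paper simply invokes continuity of $h$ (choosing $\rho$ so that $h\geq m(a,\mu)/2$ on $[0,\rho]\cup[R_0-\rho,R_0]$) while you additionally use the monotonicity of $h$ on those intervals; both are valid and rest on Lemma \ref{lem3.1}.
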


\begin{proof}
We only prove the Lemma for $m(a, \mu)$, since we can similarly prove the case of $m_r(a, \mu)$. For $u \in A_{R_{0}}$, we have
$$
E_{\mu}(u) \geq h\left(||\Delta u||_{2}\right) \geq \min _{t \in\left[0, R_{0}\right]} h(t)>-\infty,
$$
and hence $m(a, \mu)>-\infty$. Moreover, for any $u\in S_a$ we have $||\Delta(s \star u)||_{2}<R_{0}$ and $E_{\mu}(s \star u)<0$ for $s \ll-1$, and hence $m(a, \mu)<0$.

By Corollary \ref{coro3.5}, we have $m(a, \mu) \leq \inf _{\mathcal{P}_{+}^{a,\mu}} E_{\mu}$ since $\mathcal{P}_{+}^{a,\mu} \subset A_{R_{0}}$. On the other hand, if $u \in A_{R_{0}}$ then
$s_{u} \star u \in \mathcal{P}_{+}^{a,\mu} \subset A_{R_{0}}$, and
$$
E_{\mu}\left(s_{u} \star u\right)=\min \left\{E_{\mu}(s \star u) : s \in \mathbb{R} \text { and }||\Delta(s \star u)||_{2}<R_{0}\right\} \leq E_{\mu}(u).
$$
which implies that $\inf _{\mathcal{P}_{+}^{a,\mu}} E_{\mu} \leq m(a, \mu)$. To prove that $\inf _{\mathcal{P}_{+}^{a,\mu}} E_{\mu}=\inf _{\mathcal{P}_{a,\mu}} E_{\mu}$, it is sufficient to recall that $E_{\mu}\geq0$ on $\mathcal{P}_{-}^{a,\mu}$, see Corollary \ref{coro3.5}.

Finally, by continuity of $h$ there exists $\rho> 0$ such that $h(t)\geq \frac{m(a, \mu)}{2}$ if $t \in [0, \rho]\cup [R_0- \rho, R_0]$.
Therefore
$$
E_{\mu}(u) \geq h\left(||\Delta u||_{2}\right) \geq \frac{m(a, \mu)}{2}>m(a, \mu)
$$
for every $u\in S_a$ with $ ||\Delta u||_{2} \in [0, \rho]\cup [R_0- \rho, R_0]$.
\end{proof}

\begin{lemma} \label{lem3.7}
Let $N\!\geq\!2$, $\overline{p}\!<\!p\!<\!4^*$, $a,\mu\!>\!0$ and ${\mu}^{p\gamma_p-2}{a}^{p-2}\!<\!\tilde{C}(N,p)$.
Suppose that $u\!\in\! S_a$ and $E_{\mu}(u)\!<\!m(a, \mu)$, or $u\!\in\! S_{a,r}$ and $E_{\mu}(u)\!<\!m_r(a, \mu)$. Then the value $t_u$ defined by Lemma \ref{lem3.4} is negative.
\end{lemma}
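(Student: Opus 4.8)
The plan is to read the conclusion directly off the shape of the fiber map $\Psi_u^\mu$ established in Lemma \ref{lem3.4}, combined with the characterization $m(a,\mu)=\inf_{\mathcal{P}_+^{a,\mu}}E_\mu$ from Lemma \ref{lem3.6}. First I would record the tautology $\Psi_u^\mu(0)=E_\mu(0\star u)=E_\mu(u)$. By Lemma \ref{lem3.4}(1) one has $s_u\star u\in\mathcal{P}_+^{a,\mu}$, so Lemma \ref{lem3.6} gives
$$ m(a,\mu)=\inf_{\mathcal{P}_+^{a,\mu}}E_\mu\le E_\mu(s_u\star u)=\Psi_u^\mu(s_u). $$
Combining this with the hypothesis $E_\mu(u)<m(a,\mu)$ yields the key strict inequality $\Psi_u^\mu(0)<\Psi_u^\mu(s_u)$.

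The second step is to exploit the monotonicity profile of $\Psi_u^\mu$ on the half-line $(-\infty,t_u]$. By Lemma \ref{lem3.4}, $\Psi_u^\mu$ has no critical point other than $s_u$ and $t_u$, is strictly decreasing on $(-\infty,s_u)$ with $\Psi_u^\mu(-\infty)=0^-$, attains its negative local minimum at $s_u$, and is strictly increasing on $(s_u,t_u)$ up to the positive global maximum $\Psi_u^\mu(t_u)$. Consequently $s_u$ is the global minimizer of $\Psi_u^\mu$ over the whole interval $(-\infty,t_u]$; in particular $\Psi_u^\mu(s)\ge\Psi_u^\mu(s_u)$ for every $s\le t_u$.

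Now I would argue by contradiction. If $t_u\ge 0$, then $0\in(-\infty,t_u]$, so the previous step forces $\Psi_u^\mu(0)\ge\Psi_u^\mu(s_u)$, contradicting $\Psi_u^\mu(0)<\Psi_u^\mu(s_u)$. Hence $t_u<0$, as claimed. The radial statement follows verbatim: since the dilation $s\star u$ preserves radial symmetry, $s_u\star u\in\mathcal{P}_+^{a,\mu}\cap S_{a,r}$, and replacing $m(a,\mu)=\inf_{\mathcal{P}_+^{a,\mu}}E_\mu$ by $m_r(a,\mu)=\inf_{\mathcal{P}_+^{a,\mu}\cap S_{a,r}}E_\mu$ from Lemma \ref{lem3.6} runs the identical comparison.

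The argument is short, and the only point that needs care is the claim that $s_u$ minimizes $\Psi_u^\mu$ over all of $(-\infty,t_u]$ rather than merely locally: one must confirm that the boundary behavior $\Psi_u^\mu(-\infty)=0^-$ does not undercut the value $\Psi_u^\mu(s_u)$, which is exactly guaranteed by the sign information $\Psi_u^\mu(s_u)<0=\Psi_u^\mu(-\infty)$ recorded in Lemma \ref{lem3.4}. No quantitative estimate beyond this qualitative profile of $\Psi_u^\mu$ is required, so there is no serious analytic obstacle here.
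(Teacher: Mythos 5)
Your proof is correct and follows essentially the same strategy as the paper's: both are contradiction arguments resting on the chain $m(a,\mu) > E_\mu(u) = \Psi_u^\mu(0) \ge \Psi_u^\mu(s_u) = E_\mu(s_u\star u) \ge m(a,\mu)$, with the last inequality coming from $s_u\star u\in\mathcal{P}_+^{a,\mu}$ and Lemma \ref{lem3.6}. The only cosmetic difference is that you justify $\Psi_u^\mu(0)\ge\Psi_u^\mu(s_u)$ by the monotonicity of $\Psi_u^\mu$ on $(-\infty,t_u]$ under the contradiction hypothesis $t_u\ge 0$, whereas the paper routes the same comparison through the zeros $c_u<d_u$ and Lemma \ref{lem3.4}-(2); both arguments rely on the identical structure of the fiber map established in Lemma \ref{lem3.4}.
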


\begin{proof}
We only prove the Lemma for $m(a, \mu)$, since the case of $m_r(a,\mu)$ is analogous. Consider $\Psi^{\mu}_{u}$ and $s_{u}<c_{u}<t_{u}<d_{u}$ stated in Lemma \ref{lem3.4}. If $d_{u} \leq 0$, then $t_{u}<0$, and hence we can assume by contradiction that $d_u>0$. If $0\in(c_{u}, d_{u})$, then $E_{\mu}(u)=\Psi^{\mu}_{u}(0)>0$, which is not possible since $E_{\mu}(u)<m(a, \mu)<0$. Therefore $c_{u}>0$, and by
Lemma \ref{lem3.4}-(2)
\begin{align*}
m(a, \mu) &>E_{\mu}(u)=\Psi_{u}^{\mu}(0) \geq \inf _{s \in\left(-\infty, c_{u}\right]} \Psi_{u}^{\mu}(s) \\ & \geq \inf \left\{E_{\mu}(s \star u) : s \in \mathbb{R} \text { and }||\Delta(s \star u)||_{2}<R_{0}\right\}=E_{\mu}\left(s_{u} \star u\right) \geq m(a, \mu), \end{align*}
which is again a contradiction.
\end{proof}

\begin{lemma} \label{lem3.8}
Let $N\!\geq\!2$, $\overline{p}\!<\!p\!<\!4^*$, $a,\mu\!>\!0$ and ${\mu}^{p\gamma_p-2}{a}^{p-2}\!<\!\tilde{C}(N,p)$. It results that
$$\tilde{\sigma}(a, \mu) :=\inf _{u \in \mathcal{P}_{-}^{a,\mu}} E_{\mu}(u)>0.$$
\end{lemma}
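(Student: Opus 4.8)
The plan is to exploit the fact that every $u \in \mathcal{P}_{-}^{a,\mu}$ sits exactly at the global maximum of its own fiber map $\Psi_{u}^{\mu}$, and then to bound this maximal value from below by the (positive, $u$-independent) global maximum of the auxiliary scalar function $h$ from Lemma \ref{lem3.1}.

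First I would record that for $u \in \mathcal{P}_{-}^{a,\mu}$ one has $t_{u}=0$. Indeed, $u \in \mathcal{P}_{a,\mu}$ means that $s=0$ is a critical point of $\Psi_{u}^{\mu}$ by Corollary \ref{coroll2.1}; since by Lemma \ref{lem3.4}(1) the map $\Psi_{u}^{\mu}$ has exactly the two critical points $s_{u}<t_{u}$ with $s_{u}\star u\in\mathcal{P}_{+}^{a,\mu}$ and $t_{u}\star u\in\mathcal{P}_{-}^{a,\mu}$, and these two components are disjoint, the membership $u\in\mathcal{P}_{-}^{a,\mu}$ forces $0=t_{u}$. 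Consequently, by Lemma \ref{lem3.4}(3),
$$
E_{\mu}(u)=\Psi_{u}^{\mu}(0)=\Psi_{u}^{\mu}(t_{u})=\max_{s\in\mathbb{R}}\Psi_{u}^{\mu}(s).
$$

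Next I would set $M:=\max_{t\ge0}h(t)$, which by Lemma \ref{lem3.1} is a strictly positive constant, independent of $u$ and attained at some $t_{*}\in(R_{0},R_{1})$. Given any $u\in\mathcal{P}_{-}^{a,\mu}$, choose $s_{0}:=\tfrac{1}{2}\log\!\big(t_{*}/\|\Delta u\|_{2}\big)$, so that $\|\Delta(s_{0}\star u)\|_{2}=e^{2s_{0}}\|\Delta u\|_{2}=t_{*}$. Using the pointwise lower bound \eqref{eq2.3} in the form $\Psi_{u}^{\mu}(s)\ge h\big(e^{2s}\|\Delta u\|_{2}\big)$ (exactly the inequality already employed in the proof of Lemma \ref{lem3.4}), together with the maximality of $\Psi_{u}^{\mu}$ at $s=0$, I obtain
$$
E_{\mu}(u)=\max_{s\in\mathbb{R}}\Psi_{u}^{\mu}(s)\ge\Psi_{u}^{\mu}(s_{0})\ge h\big(e^{2s_{0}}\|\Delta u\|_{2}\big)=h(t_{*})=M.
$$
Taking the infimum over $u\in\mathcal{P}_{-}^{a,\mu}$ then yields $\tilde{\sigma}(a,\mu)\ge M>0$.

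I expect the only delicate point to be conceptual rather than computational. One is tempted to estimate $E_{\mu}$ directly on $\mathcal{P}_{-}^{a,\mu}$ via the Pohozaev relation $P_{\mu}(u)=0$ and the Gagliardo–Nirenberg inequality \eqref{2.9}; however, such pointwise estimates only reproduce the non-strict bound $E_{\mu}\ge0$ of Corollary \ref{coro3.5} and fail to produce a uniform positive constant, because the $\mathcal{P}_{-}$ condition $\|\Delta u\|_{2}>\hat{t}$ is too weak by a constant factor. The efficient route is instead to transfer the estimate onto the scalar function $h$ through the $L^{2}$-preserving scaling $s\star u$, which makes the positive gap $M=\max h$ explicit and independent of $u$. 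The care needed is precisely in justifying that $s=0$ is the maximum point $t_{u}$ for members of $\mathcal{P}_{-}^{a,\mu}$, and this is where the non-degeneracy $\mathcal{P}_{0}^{a,\mu}=\emptyset$ (Lemma \ref{lem3.3}) and the two-critical-point structure of Lemma \ref{lem3.4} enter.
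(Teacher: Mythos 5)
Your proof is correct and follows essentially the same route as the paper's: both identify that $0$ is the global maximum point of $\Psi_u^{\mu}$ for $u\in\mathcal{P}_{-}^{a,\mu}$ (your $t_u=0$ argument is just a more explicit justification of this step), then rescale to the maximum point of $h$ and use the bound $E_{\mu}(u)=\max_{s}\Psi_u^{\mu}(s)\geq \Psi_u^{\mu}(s_0)\geq h(t_{*})>0$, with your $s_0$ playing exactly the role of the paper's $\tau_u$. No gaps.
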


\begin{proof}
Let $t_{max}$ denote the strict maximum of the function $h$ at positive level, see Lemma \ref{lem3.1}. For every $u \in \mathcal{P}_{-}^{a,\mu}$, there exists $\tau_{u} \in \mathbb{R}$ such that $||\Delta\left(\tau_{u} \star u\right)||_{2}=t_{\max }$. Moreover, since $u \in \mathcal{P}_{-}^{a,\mu}$ we also have by Lemma \ref{lem3.4} that the value $0$ is the unique strict maximum of the function $\Psi_{u}^{\mu}$. Therefore
$$
E_{\mu}(u)=\Psi_{u}^{\mu}(0) \geq \Psi_{u}^{\mu}\left(\tau_{u}\right)=E_{\mu}\left(\tau_{u} \star u\right) \geq h\left(||\Delta\left(\tau_{u} \star u\right)||_{2}\right)=h\left(t_{\max }\right)>0.
$$
The arbitrariness of $u \in \mathcal{P}_{-}^{a,\mu}$ implies that $\inf_{\mathcal{P}_{-}^{a,\mu}} E_{\mu} \geq \max _{\mathbb{R}} h>0,$ as desired.
\end{proof}

\section{The exact upper bounds of $m(a,\mu)$ and $m_r(a,\mu)$ }
In this section, we give the exact upper bounds of $m(a,\mu)$ and $m_r(a,\mu)$, which are vital in compactness analysis of the related Palais-Smale sequences in the forthcoming section.

We observe that for any $ k \in [R_{0}, R_{1}]$, $m(a, \mu)=\inf _{A_{R_{0}}} E_{\mu}$ can be relaxed to
\begin{equation} \label{eqn3.1}
m(a, \mu)=\inf _{u\in A_{k}} E_{\mu}(u),~~~~~~~~ \mbox{where}~~~~A_{k}:=\left\{ u \in S_a,||\Delta u||_{2}<k\right\}.
\end{equation}
Indeed, if $||\Delta u||_{2}\!\in\! [R_{0}, R_{1}]$, then $E_{\mu}(u) \!\geq\! h(||\Delta u||_{2})\!\geq\!0\!>\!\inf _{A_{R_{0}}} E_{\mu}$, see (\ref{eq2.3}) and Lemma \ref{lem3.1}. So we have $m(a, \mu)\!=\!\inf _{A_{\bar{t}}} E_{\mu}$ with $\bar{t}\!=\!{\Big[ \frac{p}{2(p\gamma_p-1)C_{N,p}^pa^{p(1-\gamma_p)}} \Big]}^{\frac{1}{p\gamma_p-2}}$ defined by Lemma \ref{lem3.1}.

Now, we transform $m(a, \mu)=\inf _{A_{\bar{t}}} E_{\mu}$ into another equivalent constrained minimization problem. For any $u\in A_{\bar{t}}$, we know that $||u||_{2}=a$ and $||\Delta u||_{2}<\bar{t}$, let $v(x):=\tilde{b}u(\tilde{a}x)$ with
$$
\tilde{a}=\left(\frac{2}{\mu}\right)^{\frac{1}{2}},~~~~~~~~
\tilde{b}=\left(\frac{8}{\mu^{2}}\right)^{\frac{1}{p-2}},
~~~~~~~~\tilde{c}=\left(\frac{8}{\mu^{2}}\right)
^{\frac{2}{p-2}}\left(\frac{\mu}{2}\right)^{\frac{N}{2}}=
2^{\frac{6}{p-2}-\frac{N}{2}} \mu^{\frac{N}{2}-\frac{4}{p-2}},
$$
then we have $||v||_2^2=\tilde{c}||u||_2^2=\tilde{c} a^2$, $||\Delta v||_2={\tilde{a}}^{2-\frac{N}{2}} \tilde{b} ||\Delta u||_2<{\tilde{a}}^{2-\frac{N}{2}} \tilde{b} \bar{t}$ and
$$E_{\mu}(u)={\tilde{a}}^N {\tilde{b}}^{-p}\Phi_{0}(v),~~~~~~~~\mbox{where}~~~~\Phi_{0}(v)={||\Delta v||}_2^2-{2}{||\nabla v||}_2^2-\frac{1}{p}{||v||}_p^p.$$
Similar to (\ref{eq2.3}), we have
\begin{equation*}
 \Phi_{0}(v) \geq {||\Delta v||}_2^2-2 a \sqrt{\tilde{c}} {||\Delta v||}_2-\frac{C_{N,p}^p}{p}(a \sqrt{\tilde{c}})^{p(1-\gamma_p)}{||\Delta v||}_2^{p\gamma_p}, ~~~~\forall v \in S_{a \sqrt{\tilde{c}}}.
\end{equation*}
To study the geometry of the functional $\Phi_{0}|_{S_{a \sqrt{\tilde{c}}}}$, we consider the function $\tilde{h} : \mathbb{R}^{+} \rightarrow \mathbb{R}$:
$$\tilde{h}(\tau)=\tau^2-2 a \sqrt{\tilde{c}} \tau-\frac{C_{N,p}^p}{p}(a \sqrt{\tilde{c}})^{p(1-\gamma_p)}\tau^{p\gamma_p}.$$


\begin{lemma}\label{lem3.4.0}
Let $N\!\geq\!2$, $\overline{p}\!<\!p\!<\!4^*$, $a,\mu\!>\!0$ and
${\mu}^{p\gamma_p-2}{a}^{p-2}\!<\!\tilde{C}(N,p)$. Then the function $\tilde{h}$ has a local strict minimum at negative level and a global strict maximum at positive level. Moreover, there exist $\tilde{R}_0$ and $\tilde{R}_1$, both depending
on $a$ and $\mu$, such that
$$0 <2 a \sqrt{\tilde{c}}< \tilde{R}_0 <\tilde{\tau}< \tilde{R}_1,~~~~~~~~\tilde{h}(\tilde{R}_0)=0=\tilde{h}(\tilde{R}_1)$$
and $\tilde{h}(\tau)\!>\!0$ if only if $\tau\!\in\! (\tilde{R}_0,\tilde{R}_1)$. Here $\tilde{\tau}\!=\!{\Big[ \frac{p}{(p\gamma_p\!-\!1)C_{N,p}^p(a \sqrt{\tilde{c}})^{p(1-\gamma_p)}} \Big]}^{\frac{1}{p\gamma_p-2}}$. 
\end{lemma}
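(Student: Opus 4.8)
The plan is to mirror, step by step, the analysis already carried out for the function $h$ in Lemma \ref{lem3.1}, the only change being that the mass $a$ is replaced by $a\sqrt{\tilde c}$ and the coefficients $\tfrac12,\tfrac{\mu a}{2}$ are replaced by $1,2a\sqrt{\tilde c}$. Since $p\gamma_p>2$ for $\overline{p}<p<4^*$, the term $-\tau^{p\gamma_p}$ dominates at infinity while the linear term $-2a\sqrt{\tilde c}\,\tau$ dominates near the origin; hence $\tilde{h}(0^+)=0^-$ and $\tilde{h}(+\infty)=-\infty$, which is exactly the qualitative picture of $h$. So the strategy is to locate the zeros and critical points of $\tilde{h}$ by the same auxiliary-function trick and then to match the positivity threshold against the standing hypothesis.

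First I would record that, for $\tau>0$, dividing $\tilde{h}(\tau)>0$ by $\tau$ shows it is equivalent to
$$\tilde{\varphi}(\tau):=\tau-\frac{C_{N,p}^p}{p}(a\sqrt{\tilde c})^{p(1-\gamma_p)}\tau^{p\gamma_p-1}>2a\sqrt{\tilde c}.$$
The function $\tilde{\varphi}$ has a single critical point on $(0,+\infty)$, a global maximum, located precisely at the stated $\tilde{\tau}$, with maximal value $\tilde{\varphi}(\tilde{\tau})=\frac{p\gamma_p-2}{p\gamma_p-1}\tilde{\tau}$. Consequently $\tilde{h}$ is positive on a bounded open interval $(\tilde{R}_0,\tilde{R}_1)$, and only there, if and only if $\tilde{\varphi}(\tilde{\tau})>2a\sqrt{\tilde c}$; combining this positivity with $\tilde{h}(0^+)=0^-$ and $\tilde{h}(+\infty)=-\infty$ one reads off the local minimum at negative level in $(0,\tilde{R}_0)$, the global maximum at positive level in $(\tilde{R}_0,\tilde{R}_1)$, the identities $\tilde{h}(\tilde{R}_0)=0=\tilde{h}(\tilde{R}_1)$, and the ordering $0<2a\sqrt{\tilde c}<\tilde{R}_0<\tilde{\tau}<\tilde{R}_1$.

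The crux, and the step I expect to require the most care, is checking that the positivity condition $\tilde{\varphi}(\tilde{\tau})>2a\sqrt{\tilde c}$ is exactly the hypothesis ${\mu}^{p\gamma_p-2}{a}^{p-2}<\tilde{C}(N,p)$. Raising $\frac{p\gamma_p-2}{p\gamma_p-1}\tilde{\tau}>2a\sqrt{\tilde c}$ to the power $p\gamma_p-2$ and inserting the value of $\tilde{\tau}$, one finds after simplification, using $(p\gamma_p-2)+p(1-\gamma_p)=p-2$ and the definition of $\tilde{C}(N,p)$, that it is equivalent to $(a\sqrt{\tilde c})^{p-2}<2^{3-p\gamma_p}\tilde{C}(N,p)$. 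Here one substitutes $\tilde c=2^{\frac{6}{p-2}-\frac N2}\mu^{\frac N2-\frac{4}{p-2}}$ and uses $p\gamma_p=\frac{N(p-2)}4$, which gives $(a\sqrt{\tilde c})^{p-2}=2^{3-p\gamma_p}{\mu}^{p\gamma_p-2}{a}^{p-2}$; the factors $2^{3-p\gamma_p}$ cancel and the condition collapses exactly to ${\mu}^{p\gamma_p-2}{a}^{p-2}<\tilde{C}(N,p)$, as needed.

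Finally, to guarantee that the minimum and maximum above are the only critical points of $\tilde{h}$, I would argue as in Lemma \ref{lem3.1} through $\tilde{h}'(\tau)=0$, which reduces to $\tilde{\psi}(\tau)=2a\sqrt{\tilde c}$ with $\tilde{\psi}(\tau):=2\tau-\gamma_p C_{N,p}^p(a\sqrt{\tilde c})^{p(1-\gamma_p)}\tau^{p\gamma_p-1}$. The map $\tilde{\psi}$ has a unique critical point $\hat{\tau}$, a strict maximum, and the required strict inequality $\tilde{\psi}(\hat{\tau})>2a\sqrt{\tilde c}$ again reduces, after the same substitution and using $p\gamma_p<2^{p\gamma_p-1}$, to a condition weaker than and hence implied by the standing hypothesis on ${\mu}^{p\gamma_p-2}{a}^{p-2}$. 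This forces $\tilde{h}'$ to vanish at exactly two points, identifies $\tilde{R}_0,\tilde{R}_1$ as the two zeros of $\tilde{h}$, and completes the proof.
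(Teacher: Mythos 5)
Your proposal is correct and follows essentially the same route as the paper's proof: reduce $\tilde h(\tau)>0$ to $\tilde\varphi(\tau)>2a\sqrt{\tilde c}$, locate the unique maximum $\tilde\tau$ of $\tilde\varphi$, match $\tilde\varphi(\tilde\tau)>2a\sqrt{\tilde c}$ against the hypothesis ${\mu}^{p\gamma_p-2}a^{p-2}<\tilde C(N,p)$, and rule out further critical points via $\tilde h'(\tau)=0 \Leftrightarrow \tilde\psi(\tau)=2a\sqrt{\tilde c}$ exactly as in Lemma \ref{lem3.1}. In fact your middle paragraph, verifying via $(a\sqrt{\tilde c})^{p-2}=2^{3-p\gamma_p}\mu^{p\gamma_p-2}a^{p-2}$ that the positivity threshold collapses to the standing hypothesis, spells out the computation that the paper only asserts with ``Notice that,'' so nothing is missing.
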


\begin{proof}
For $\tau>0$, it is easy to see that $\tilde{h}(\tau)>0$ if and only if
$$ \tilde{\varphi}(\tau)>2 a \sqrt{\tilde{c}},~~~~\mbox{with}~~~~\tilde{\varphi}(\tau)=\tau-\frac{C_{N,p}^p}{p}(a \sqrt{\tilde{c}})^{p(1-\gamma_p)}\tau^{{p\gamma_p}-1}.$$
Also, $\tilde{\varphi}$ has a unique critical point on $(0,+\infty)$, which is a global maximum point at positive level,
in $\tilde{\tau}={\Big[ \frac{p}{(p\gamma_p-1)C_{N,p}^p(a \sqrt{\tilde{c}})^{p(1-\gamma_p)}} \Big]}^{\frac{1}{p\gamma_p-2}}$,
and the maximum level is $ \tilde{\varphi}(\tilde{\tau})=\frac{p\gamma_p-2}{p\gamma_p-1}\tilde{\tau}$.

Notice that
$$ {\mu}^{p\gamma_p-2}{a}^{p-2}<\tilde{C}(N,p) \Longleftrightarrow \tilde{\varphi}(\tilde{\tau})>2 a \sqrt{\tilde{c}}.$$
Therefore, $\tilde{h}$ is positive on an open interval $(\tilde{R}_0,\tilde{R}_1)$ if only if ${\mu}^{p\gamma_p-2}{a}^{p-2}<\tilde{C}(N,p)$. It follows immediately that $\tilde{h}$ has a global maximum at positive level in $(\tilde{R}_0,\tilde{R}_1)$. Moreover, since $\tilde{h}(0^+)=0^{-}$, there exists a local minimum point at negative level in $(0,\tilde{R}_0)$. We also observe that $0 <2 a \sqrt{\tilde{c}}< \tilde{R}_0 <\tilde{\tau}< \tilde{R}_1$. The fact that $\tilde{h}$ has no other critical points can be verified as in the proof of Lemma \ref{lem3.1}. \\
\end{proof}

Define
$$m_{\Phi_0}(a, \mu)=\inf_{v\in\tilde{A}_{\tilde{R}_0}} \Phi_{0}(v),~~~~~~\mbox{where}~~~~\tilde{A}_{\tilde{R}_0}:=\left\{ v \in S_{a \sqrt{\tilde{c}}},||\Delta v||_{2}<\tilde{R}_0\right\}.$$
By Lemma \ref{lem3.4.0}, we have $m_{\Phi_0}(a, \mu)=\inf_{v\in\tilde{A}_{\tilde{\tau}}} \Phi_{0}(v)$ with $\tilde{\tau}={\Big[ \frac{p}{(p\gamma_p-1)C_{N,p}^p(a \sqrt{\tilde{c}})^{p(1-\gamma_p)}} \Big]}^{\frac{1}{p\gamma_p-2}}$. It is easy to prove that
$$  \inf\big\{E_{\mu}(u) : u\in S_{a},||\Delta u||_{2}<\bar{t}\big\} ={\tilde{a}}^N {\tilde{b}}^{-p} \inf \big\{ \Phi_{0}(v) :v \in S_{a \sqrt{\tilde{c}}}, ||\Delta v||_{2}<{\tilde{a}}^{2-\frac{N}{2}} \tilde{b} \bar{t}\big\}.  $$
A direct calculation implies that $\tilde{\tau}={\tilde{a}}^{2-\frac{N}{2}} \tilde{b} \bar{t}$, therefore we have
\begin{equation} \label{qan3.2.3}
 m(a, \mu)=\inf _{u\in A_{\bar{t}}} E_{\mu}(u)={\tilde{a}}^N {\tilde{b}}^{-p}\inf_{v\in\tilde{A}_{\tilde{\tau}}} \Phi_{0}(v)={\tilde{a}}^N {\tilde{b}}^{-p}m_{\Phi_0}(a, \mu).
\end{equation}

\begin{lemma} \label{lema3.4.1}
Let $N \!\geq\! 2$ and $\overline{p}\!<\!p\!<\!4^*$, $a,\mu\!>\!0$ such that
${\mu}^{p\gamma_p-2}{a}^{p-2}\!<\!\tilde{C}(N,p)$. If $N \!\geq\! 5$ and $p<4$, then $m_{\Phi_0}(a, \mu)\!<\!-\tilde{c} a^2$ and $m(a, \mu)\!<\!-\frac{a^2\mu^2}{8}$, furthermore, we have $m_r(a,\mu)\!<\!-\frac{a^2\mu^2}{8}$.
\end{lemma}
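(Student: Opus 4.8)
The plan is to prove the single inequality $m_{\Phi_0}(a,\mu)<-\tilde c a^2$; the rest is bookkeeping. Indeed, by the scaling identity \eqref{qan3.2.3} we have $m(a,\mu)=\tilde a^{N}\tilde b^{-p}m_{\Phi_0}(a,\mu)$, and inserting the explicit values of $\tilde a,\tilde b,\tilde c$ one computes $\tilde a^{N}\tilde b^{-p}\tilde c=(2/\mu)^{N/2}(\mu/2)^{N/2}\,(8/\mu^{2})^{(2-p)/(p-2)}=\mu^{2}/8$; hence $m_{\Phi_0}(a,\mu)<-\tilde c a^2$ yields at once $m(a,\mu)<-\frac{a^{2}\mu^{2}}{8}$. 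Since the dilation $v(x)=\tilde b\,u(\tilde a x)$ preserves radial symmetry, the same identity relates $m_r(a,\mu)$ to the radial infimum of $\Phi_0$, so it suffices to exhibit one radial admissible $v$ (i.e.\ $\|v\|_2^2=\tilde c a^2$ and $\|\Delta v\|_2<\tilde R_0$) with $\Phi_0(v)<-\tilde c a^2$, and then $m(a,\mu)$ and $m_r(a,\mu)$ are both below the threshold.

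To produce such a $v$ I rewrite the target through Plancherel. On the constraint $\|v\|_2^2=\tilde c a^2$, the inequality $\Phi_0(v)<-\tilde c a^2$ is equivalent to $\Phi_0(v)+\|v\|_2^2<0$, and since $\|\Delta v\|_2^2-2\|\nabla v\|_2^2+\|v\|_2^2=\int_{\R^N}(|\xi|^2-1)^2|\mathcal F v|^2\,d\xi$ we have
$$\Phi_0(v)+\|v\|_2^2=\int_{\R^N}(|\xi|^2-1)^2|\mathcal F v(\xi)|^2\,d\xi-\frac1p\|v\|_p^p .$$
Thus I only need an admissible radial $v$ for which the nonnegative quadratic form $\int(|\xi|^2-1)^2|\mathcal F v|^2$ is strictly smaller than $\frac1p\|v\|_p^p$. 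The natural idea, consistent with Theorem \ref{th1.3}-(4) and Remark \ref{re1.3}, is to concentrate the Fourier transform on the unit sphere, where $(|\xi|^2-1)^2$ vanishes.

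Concretely I would set $\mathcal F v_\epsilon(\xi)=c_\epsilon\,\psi\!\big(\tfrac{|\xi|-1}{\epsilon}\big)$ for a fixed profile $\psi\in C_c^\infty(\R)$, $\psi\ge0$, $\psi\not\equiv0$, with $c_\epsilon>0$ fixed by $\|v_\epsilon\|_2^2=\tilde c a^2$ (so $c_\epsilon\sim\epsilon^{-1/2}$); then $v_\epsilon$ is radial, real-valued and in $H^2$. Two estimates are routine: on the shell $\big|\,|\xi|-1\,\big|<\epsilon$ one has $(|\xi|^2-1)^2=O(\epsilon^2)$, so $\int(|\xi|^2-1)^2|\mathcal F v_\epsilon|^2=O(\epsilon^2)$; and $\|\Delta v_\epsilon\|_2^2=\int|\xi|^4|\mathcal F v_\epsilon|^2=\tilde c a^2+O(\epsilon)$, whence $\|\Delta v_\epsilon\|_2\to a\sqrt{\tilde c}<\tilde R_0$ by Lemma \ref{lem3.4.0} (which gives $2a\sqrt{\tilde c}<\tilde R_0$), so $v_\epsilon\in\tilde A_{\tilde R_0}$ for $\epsilon$ small. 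The crux is the lower bound $\|v_\epsilon\|_p^p\ge c\,\epsilon^{p/2}$. Using the Fourier transform of the surface measure, $v_\epsilon(x)\approx c_\epsilon|x|^{-\frac{N-2}2}\int \psi\big(\tfrac{r-1}{\epsilon}\big)r^{N/2}J_{\frac{N-2}2}(r|x|)\,dr$, together with the Bessel asymptotics $J_\nu(t)\sim\sqrt{2/\pi t}\,\cos(t-\cdots)$, one finds $|v_\epsilon(x)|\approx\epsilon^{1/2}|x|^{-\frac{N-1}2}|\cos(|x|-\cdots)|$ in the range $1\ll|x|\ll1/\epsilon$. Integrating in polar coordinates, the radial exponent is $\beta=\tfrac{(N-1)(2-p)}2$, which satisfies $\beta<-1$ because $p>\bar p$ forces $(N-1)(p-2)>\tfrac{8(N-1)}N>2$; hence the radial integral converges to a positive constant, the oscillatory factor only costs a fixed positive multiple, and $\|v_\epsilon\|_p^p\sim\epsilon^{p/2}$.

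Combining the three estimates gives $\Phi_0(v_\epsilon)+\|v_\epsilon\|_2^2\le C\epsilon^2-\frac{c}{p}\epsilon^{p/2}$, and here the hypothesis $p<4$ enters decisively: since $p/2<2$ the negative term dominates, so $\Phi_0(v_\epsilon)+\|v_\epsilon\|_2^2<0$ for all small $\epsilon$, i.e.\ $m_{\Phi_0}(a,\mu)\le\Phi_0(v_\epsilon)<-\tilde c a^2$. Because $v_\epsilon$ is radial and admissible, the same bound holds for the radial infimum, and the two asserted inequalities follow. I expect the main difficulty to be the rigorous lower bound $\|v_\epsilon\|_p^p\ge c\,\epsilon^{p/2}$: one must justify the Bessel and oscillatory-integral asymptotics uniformly through the crossover regions $|x|\sim1$ and $|x|\sim1/\epsilon$ and verify that the oscillation $\cos(|x|-\cdots)$ does not degenerate the $L^p$ integral. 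This is precisely where $p>\bar p$ (giving $\beta<-1$) and $N\ge5$ (which makes the range $\bar p<p<4$ nonempty) are used.
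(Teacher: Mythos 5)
Your proposal is correct, and it is essentially the Fourier-dual of the paper's own argument rather than a truly independent route. The paper (following Lemma 5.5 of \cite{Nbal}) takes the radial Helmholtz solution $\psi(x)=|x|^{-\frac{N-2}{2}}J_{\frac{N-2}{2}}(|x|)$, truncates it at scale $m$, normalizes its mass to $a\sqrt{\tilde c}$, and uses the same identity you use, $\Phi_{0}(v)+\|v\|_{2}^{2}=\|(\Delta+1)v\|_{2}^{2}-\frac{1}{p}\|v\|_{p}^{p}$, together with the estimates $\|(\Delta+1)\psi_{m}\|_{2}^{2}\leq C_{7}m^{-1}$, $\|\psi_{m}\|_{2}^{2}\leq C_{2}m$, $\|\psi_{m}\|_{p}^{p}\geq C_{4}$; after normalization this is exactly your competition $O(\epsilon^{2})$ versus a term of order $\epsilon^{p/2}$ with $\epsilon=1/m$, and $p<4$ enters in the identical way ($\frac{p-2}{2}<1$). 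Since $\psi$ is, up to a constant, the inverse Fourier transform of the surface measure of the unit sphere, your shell functions $\mathcal{F}v_{\epsilon}=c_{\epsilon}\,\psi\big(\frac{|\xi|-1}{\epsilon}\big)$ are the same family of test functions described on the frequency side, and your scaling bookkeeping ($\tilde a^{N}\tilde b^{-p}\tilde c=\mu^{2}/8$, radiality preserved under the dilation) reproduces \eqref{qan3.2.3} and the paper's closing remark. The trade-off between the two presentations: your formulation makes the quadratic-form bound $O(\epsilon^{2})$ and the admissibility $\|\Delta v_{\epsilon}\|_{2}\to a\sqrt{\tilde c}<\tilde R_{0}$ immediate from Plancherel, but it pushes all the real work into the lower bound $\|v_{\epsilon}\|_{p}^{p}\geq c\,\epsilon^{p/2}$, which you only sketch via Bessel/stationary-phase asymptotics; this is precisely the estimate the paper avoids re-proving by importing $\|\psi_{m}\|_{p}^{p}\geq C_{4}$ ready-made from \cite{Nbal}. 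Your sketch is completable, and more easily than you fear: for a lower bound you do not need uniform control through the crossover $|x|\sim 1/\epsilon$, since the convergent radial integral (your exponent $\beta<-1$, coming from $p>\bar p$) localizes the $L^{p}$ mass at bounded $|x|$, so it suffices to note that on a fixed compact annulus $1\leq|x|\leq M$ one has $v_{\epsilon}=(c_{\epsilon}\epsilon)\,\kappa\,\widehat{\sigma}(x)(1+o(1))$ uniformly as $\epsilon\to 0^{+}$, where $\widehat{\sigma}$ denotes the Fourier transform of the spherical surface measure and $\kappa$ is an explicit constant, and that $\widehat{\sigma}$ is not identically zero there.
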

\begin{proof}
This is motivated by Lemma 5.5 in \cite{Nbal}. We give out the details since we consider a local minimization problem. From Appendix B.4 in \cite{LgOs}, it is not difficult to check that $\psi(x)=|x|^{-\frac{N-2}{2}} J_{\frac{N-2}{2}}(|x|)$ satisfies
$$(\Delta+1) \psi=0, ~~~~~~~~\mbox{in}~~~~\mathbb{R}^{N}.$$
Here $J_{\nu}$ is the Bessel function of the first kind with order $\nu$. Then, for all $m\in \mathbb{N}$, we define
$$\psi_{m}(x)=\psi(x) \phi\left(\frac{x}{m}\right),$$
where $\phi \in C^{\infty}(\R^N)$ is such that $\phi(x)=1$ if $|x|\leq1$, $\phi(x)=0$ if $|x|\geq2$ and $0\leq\phi(x)\leq1$ for all $x \in\R^N$. Using the fact that $(\Delta+1)\psi=0$, we have
$$\Delta \psi_{m}(x)=-\psi(x)\phi\left(\frac{x}{m}\right)+\frac{1}{m^{2}} \psi(x)\Delta \phi\left(\frac{x}{m}\right) +\frac{2}{m} \nabla \phi\left(\frac{x}{m}\right) \cdot \nabla \psi(x)$$
and $(\Delta+1) \psi_{m}(x)=\frac{1}{m^{2}} \psi(x)\Delta \phi\left(\frac{x}{m}\right) +\frac{2}{m} \nabla \phi\left(\frac{x}{m}\right) \cdot \nabla \psi(x)$. As in the proof of Lemma 5.5 in \cite{Nbal}, there exists $m_1\!>\!0$ large such that, for $m\!\geq\!m_1$, we have
\begin{align*}
C_1\!\leq\! ||\psi_{m}||_{2}^2\!\leq\! C_2 m,~~||\psi(x)\Delta\phi\left(\frac{x}{m}\right)||_{2}^2\!\leq\! C_3 m,
~~||\psi_{m}||_{p}^p \!\geq\! C_4,~~||\nabla \phi\left(\frac{x}{m}\right) \cdot \nabla \psi(x)||_{2}^2\!\leq\! C_5 m,
\end{align*}
consequently, there exists $m_2>0$ large such that, for $m\geq max\{m_1,m_2\}$, we have
\begin{align*}
||\Delta \psi_{m}||_{2}^2\!
\leq 3||\psi_{m}||_{2}^2\!+\!C_6 m^{-1}~~~~\mbox{and}~~~~||(\Delta+1) \psi_{m}||_{2}^2\leq C_7 m^{-1},
\end{align*}
where $C_i(i\in\mathbb{N})$ are some positive constants independent of $m$.

For any $m \!\in\! \mathbb{N}$, let $\widetilde{\psi}_{m}\!=\!a \sqrt{\tilde{c}} \frac{\psi_{m}}{\left\|\psi_{m}\right\|_{2}}$, then $\widetilde{\psi}_{m}\!\in \! S_{a \sqrt{\tilde{c}}}$. Since $p<4$, there exists $m_3\!>\!0$ large such that, for $m\!\geq\! max\{m_1,m_2,m_3\}$, we have
\begin{align*}
&||\Delta \tilde{\psi}_{m}||_{2}^2=\tilde{c}a^2 \frac{||\Delta \psi_{m}||_{2}^2}{\left\|\psi_{m}\right\|_{2}^2}\leq \tilde{c}a^2(3+\frac{C_6}{C_1} m^{-1})\leq 4\tilde{c}a^2,
\end{align*}
and
\begin{align*}
&\Phi_{0}(\tilde{\psi}_{m})\!+\!||\tilde{\psi}_{m}||_{2}^2\!=\!||(\Delta\!+\!1) \tilde{\psi}_{m}||_{2}^2\!-\!\frac{1}{p}{||\tilde{\psi}_{m}||}_p^p
=\frac{\tilde{c}a^2}{\left\|\psi_{m}\right\|_{2}^2}||(\Delta+1)
{\psi}_{m}||_{2}^2-\frac{1}{p}\Big(\frac{\tilde{c}a^2}
{\left\|\psi_{m}\right\|_{2}^2}\Big)^{\frac{p}{2}}{||{\psi}_{m}||}_p^p\\
&\!\leq\! \frac{\tilde{c}a^2}{\left\|\psi_{m}\right\|_{2}^2}\Big\{\frac{C_7} {m}\!-\!\frac{C_4}{p}\Big(\frac{\tilde{c}a^2}
{C_2}\Big)^{\frac{p-2}{2}}m^{-\frac{(p-2)}{2}}\Big\}\!\leq\! \frac{\tilde{c}a^2}{C_1 m^{\frac{(p-2)}{2}}}\Big\{\frac{C_7}{m^{\frac{(4-p)}{2}}} \!-\!\frac{2^{3-p\gamma_p} C_4}{p {C_2}^{\frac{p-2}{2}}} {\mu}^{(p\gamma_p-2)} a^{(p-2)} \Big\}\!<\!0.
\end{align*}
From Lemma \ref{lem3.4.0}, we deduce that $||\Delta \tilde{\psi}_{m}||_{2}\leq 2a\sqrt{\tilde{c}}<\tilde{\tau}$. Therefore, we have
$$ m_{\Phi_0}(a, \mu)=\inf_{v\in\tilde{A}_{\tilde{\tau}}} \Phi_{0}(v)\leq \Phi_{0}(\tilde{\psi}_{m})<-||\tilde{\psi}_{m}||_{2}^2=-\tilde{c} a^2.$$
By using (\ref{qan3.2.3}), we get $m(a, \mu)<-\frac{a^2\mu^2}{8}$. Since the test function $\psi$ is radial, we can similarly prove that $m_r(a,\mu)\!<\!-\frac{a^2\mu^2}{8}$.
\end{proof}


\section{Compactness of Palais-Smale sequences}
In this section, we give the compactness analysis of Palais-Smale sequences for $\bar{p}\!<\!p\!<\!4^*$.

This is a highly nontrivial issue for $\mu>0$ even in the radial space $H_{rad}^2(\R^N)$, which has been formulated in Section 1 as one of the main  difficulties.

Recall that
\begin{equation} \label{cost2.12}
{C}^{*}(N,p):=\!\frac{2^{p\gamma_p\!-\!2}p}
{2(p\gamma_p\!-\!1)C_{N,p}^p}\Big(\frac{1\!-\!\gamma_p}{\gamma_p} \Big) ^{\frac{p\gamma_p\!-\!2}{2}},~~~~~~~~{C}_{*}(N,p):=\frac{p}
{2(p\gamma_p\!-\!1)C_{N,p}^p}\Big[ \frac{2(p\!-\!2)}{p\gamma_p\!-\!1} \Big] ^{\frac{p\gamma_p\!-\!2}{2}}
\end{equation}
and
\begin{equation} \label{eq1.06}
S_{a,r}: =\Big\{ u \in H_{rad}^2({\mathbb{R}^N}): {||u||}_2^2=a^2  \Big\}.
\end{equation}
Since the
functional $E_{\mu}$ is invariant under rotation, a critical point (resp. a Palais-Smale sequence) for $\left.E_{\mu}\right|_{S_{a,r}}$ yields a real-valued radial critical point (resp. Palais-Smale sequence) for $\left.E_{\mu}\right|_{S_a}$(See Theorem 1.28 in \cite{Mwlm}). We have the following two lemmas.

\begin{lemma} \label{lem4.1}
Assume that $N\!\geq\!2$, $\overline{p}\!<\!p\!<\!4^*$, $a,\mu\!>\!0$ and $${\mu}^{p\gamma_p-2}{a}^{p-2}\!<\!\min \{ {C}^{*}(N,p), {C}_{*}(N,p), \tilde{C}(N,p)\}.$$
Let $\left\{u_{n}\right\} \subset S_{a, r}$ be a Palais-Smale
sequence for $\left.E_{\mu}\right|_{S_{a}}$ at level $c>0$ with $P_{\mu}\left(u_{n}\right) \rightarrow 0$ as $n \rightarrow \infty$. Then up to a subsequence $u_{n} \rightarrow u$ strongly in $H^{2}$, and $u \in S_{a}$ is a real-valued radial solution to (\ref{eq1.1}) for some $\lambda<-\frac{{\mu}^2}{4}$.
\end{lemma}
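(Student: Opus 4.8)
The plan is to run the standard compactness scheme — boundedness, weak limit with radial compactness, identification of the limit equation, and an exact quadratic argument for strong convergence — with the crucial new input being the sharp bound $\lambda<-\frac{\mu^2}{4}$, which is where the smallness of $\mu^{p\gamma_p-2}a^{p-2}$ enters. First I would establish boundedness of $\{u_n\}$ in $H^2$. Combining $E_\mu(u_n)\to c$ with $P_\mu(u_n)\to0$ gives $E_\mu(u_n)-\tfrac14P_\mu(u_n)=-\tfrac{\mu}{4}\|\nabla u_n\|_2^2+\tfrac{p\gamma_p-2}{2p}\|u_n\|_p^p\to c$. Feeding the interpolation inequality \eqref{eqa2.3.1} (so $\|\nabla u_n\|_2^2\le a\|\Delta u_n\|_2$) and \eqref{2.9} into both $P_\mu(u_n)=o_n(1)$ and this identity, one obtains a quadratic inequality in $\|\Delta u_n\|_2$ that opens upward, bounding $\|\Delta u_n\|_2$ (hence $\|\nabla u_n\|_2$, $\|u_n\|_p$) from above. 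Passing to a subsequence, $u_n\rightharpoonup u$ in $H^2$, and since the $u_n$ are radial the compact embedding $H^2_{rad}(\R^N)\hookrightarrow L^p(\R^N)$ yields $u_n\to u$ in $L^p$.

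Next, nontriviality and the limit equation. Because $c>0$ and $-\tfrac{\mu}{4}\|\nabla u_n\|_2^2\le0$, the identity above forces $\|u_n\|_p^p\ge \tfrac{2pc}{p\gamma_p-2}+o_n(1)>0$, so $\|u\|_p\neq0$ and $u\not\equiv0$. The Lagrange multipliers $\lambda_n$, read off by testing $E_\mu'(u_n)-\lambda_n u_n\to0$ against $u_n$, are bounded by the above norm bounds, so up to a subsequence $\lambda_n\to\lambda$; passing to the limit in the equation (strong $L^p$ convergence controls the nonlinear term) shows $u$ solves $\Delta^2u+\mu\Delta u-\lambda u=|u|^{p-2}u$ weakly, whence $P_\mu(u)=0$ by Proposition \ref{pro3.01}. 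Writing $\rho:=\|u\|_2\le a$, the smallness hypotheses persist for the mass $\rho$ since $p>2$.

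The heart of the proof — and the step I expect to be the main obstacle — is the sharp estimate $\lambda<-\tfrac{\mu^2}{4}$. Testing the limit equation against $u$ and using $P_\mu(u)=0$ to eliminate $\|u\|_p^p$ gives $\lambda\rho^2=\tfrac{\gamma_p-1}{\gamma_p}\|\Delta u\|_2^2-\tfrac{\mu(2\gamma_p-1)}{2\gamma_p}\|\nabla u\|_2^2$, where $2\gamma_p-1>0$ for $N\ge5$ and $p>\overline p$. Hence $\lambda<-\tfrac{\mu^2}{4}$ is implied by the explicit lower bound $\|\Delta u\|_2^2>\tfrac{\gamma_p}{1-\gamma_p}\cdot\tfrac{\mu^2\rho^2}{4}$, and it is precisely here that the conditions $\mu^{p\gamma_p-2}a^{p-2}<\min\{C^*(N,p),C_*(N,p)\}$ are invoked: fed into $P_\mu(u)=0$ through \eqref{2.9} and \eqref{eqa2.3.1} they pin $\|\Delta u\|_2$ above this threshold (the factor $(\tfrac{1-\gamma_p}{\gamma_p})^{(p\gamma_p-2)/2}$ appearing in $C^*$ is tuned exactly for this). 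This realizes the passage ``lower bound on $\|\Delta u_n\|_2$ $\Rightarrow$ upper bound on $\lambda$'' flagged in the introduction, and it is exactly what breaks down when $\mu\le0$ or when $\mu^{p\gamma_p-2}a^{p-2}$ is not small.

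Finally, strong convergence. Setting $w_n=u_n-u\rightharpoonup0$, the Brezis--Lieb splitting together with $u_n\to u$ in $L^p$ and the weak form of the equation yields \eqref{eq1.08}, i.e. $\|\Delta w_n\|_2^2-\mu\|\nabla w_n\|_2^2-\lambda\|w_n\|_2^2=o_n(1)$. With $\alpha=\lim\|\Delta w_n\|_2^2$, $\delta=\lim\|w_n\|_2^2$ and $\|\nabla w_n\|_2^2\le\|w_n\|_2\|\Delta w_n\|_2$, if $\delta>0$ then $t:=\sqrt{\alpha/\delta}$ would satisfy $t^2-\mu t-\lambda\le0$; but $\lambda<-\tfrac{\mu^2}{4}$ makes the discriminant $\mu^2+4\lambda$ negative, so this quadratic is strictly positive, a contradiction. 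Thus $\delta=0$, and then $\alpha=0$, so $u_n\to u$ in $H^2$. In particular $\|u\|_2=a$, so $u\in S_a$ is the desired real-valued radial solution with $\lambda<-\tfrac{\mu^2}{4}$.
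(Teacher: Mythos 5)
Your scheme (boundedness, radial compact embedding, Lagrange multipliers, nontriviality via $c>0$, and the final discriminant contradiction) matches the paper's, and those steps are sound. The genuine gap is in the decisive step, the proof of $\lambda<-\frac{\mu^2}{4}$. You work with the weak limit $u$, of mass $\rho=\|u\|_2\le a$, and claim that the smallness conditions, fed into $P_{\mu}(u)=0$ through \eqref{2.9} and \eqref{eqa2.3.1}, pin $\|\Delta u\|_2^2$ above the threshold $\frac{\gamma_p}{1-\gamma_p}\frac{\mu^2\rho^2}{4}$. That implication is false. Inserting \eqref{2.9} and \eqref{eqa2.3.1} into $P_{\mu}(u)=0$ gives
$$
2\|\Delta u\|_2^2\;\le\;\mu\rho\,\|\Delta u\|_2+2\gamma_p C_{N,p}^p\rho^{p(1-\gamma_p)}\|\Delta u\|_2^{p\gamma_p},
$$
and since $p\gamma_p>2$ this only forces $\|\Delta u\|_2$ into one of two branches: a small one (of size roughly $\mu\rho/2$) or a large one. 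Nothing here excludes the small branch, and the small branch lies strictly below your threshold, because $\frac{\gamma_p}{1-\gamma_p}>1$ exactly in the regime $2\gamma_p>1$ you work in. Concretely, the local minimizers $u_k$ of Theorem \ref{th1.3} lie on the Pohozaev manifold, satisfy the smallness hypotheses, and have $\|\Delta u_k\|_2^2/\|u_k\|_2^2\to\frac{\mu^2}{4}$, strictly below the threshold; so no argument using only $P_{\mu}(u)=0$ plus smallness can yield your lower bound. What rules out the small branch is the positive energy level $c>0$, but that is information about the \emph{sequence}, not about the weak limit: before compactness is established you know neither $\|u\|_2=a$ nor $E_{\mu}(u)=c$, precisely because $\|\nabla u_n\|_2$ need not converge (the very obstruction this lemma is about). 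The paper therefore runs the whole estimate on sequence quantities: testing with $u_n$ and using $P_{\mu}(u_n)\to0$ gives $\lambda a^2=-\lim_n\|\Delta u_n\|_2^2+(2\gamma_p-1)\lim_n\|u_n\|_p^p$, while
$$
0<c=\lim_n E_{\mu}(u_n)=-\tfrac12\lim_n\|\Delta u_n\|_2^2+\tfrac{p\gamma_p-1}{p}\lim_n\|u_n\|_p^p
$$
combined with \eqref{2.9} (at mass exactly $a$) yields $\lim_n\|\Delta u_n\|_2\ge\bigl[\tfrac{p}{2(p\gamma_p-1)C_{N,p}^p a^{p(1-\gamma_p)}}\bigr]^{1/(p\gamma_p-2)}$; it is against this bound that ${C}^{*}(N,p)$ is tuned. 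Your argument never invokes $c>0$ at this step, so it cannot close.

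Two secondary points. First, you only treat $2\gamma_p-1>0$, which holds for $N\ge4$ but can fail for $N=2,3$ within the lemma's stated range $N\ge2$; the constant ${C}_{*}(N,p)$ in the hypothesis exists precisely for the complementary case $0<\gamma_p\le\frac12$ (the paper's Case (ii), where $(2\gamma_p-1)\lim_n\|u_n\|_p^p$ is controlled using the lower bound on $\lim_n\|u_n\|_p^p$ coming from the energy identity rather than the upper bound from $P_\mu$). Second, Proposition \ref{pro3.01} requires $\lambda<0$, so you must prove $\lambda<0$ (as the paper does in its Step (3), again using $c>0$) before invoking it to conclude $P_{\mu}(u)=0$.
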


\begin{proof}
The proof is divided into five main steps.\\
\noindent (1) $\{u_n\}$ is bounded in $H^2$. Since $P_{\mu}(u_n)\!=\!2{||\Delta u_n||}_2^2\!-\!\mu{||\nabla u_n||}_2^2\!-\!2{\gamma_p}{||u_n||}_p^p\!=\!o(1)$, we have
$$E_{\mu}\left(u_{n}\right)=\left(\frac{1}{2}-\frac{1}{p\gamma_{p} }\right)||\Delta u_{n}||_{2}^{2}-\frac{\mu}{2}
\left(1-\frac{1}{p\gamma_{p}}\right)||\nabla u_{n}||_{2}^{2}+o(1).$$
This leads to
$
\left(\frac{1}{2}-\frac{1}{p\gamma_{p} }\right)||\Delta u_{n}||_{2}^{2}
 \leq (c+1)+\frac{\mu a}{2}
\left(1-\frac{1}{p\gamma_{p}}\right)||\Delta u_{n}||_{2}
$. We proved that $||\Delta u_{n}||_{2}$ is bounded. So $\{u_n\}$ is bounded in $H^2$ since $||u_{n}||_{2}=a$.

\noindent (2) $\exists$ Lagrange multipliers $\lambda_{n} \!\rightarrow \! \lambda \!\in\! \mathbb{R}$. Since $N\!\geq\!2$, the embedding $H_{{rad}}^{2}\left(\mathbb{R}^{N}\right) \!\hookrightarrow \! L^{r}\left(\mathbb{R}^{N}\right)$ is compact for $r \!\in\!\left(2,4^{*}\right)$, and we deduce that there exists $u\! \in\! H_{{rad}}^{2}$ such that, up to a subsequence, $u_{n} \!\rightharpoonup \!u$ weakly in $H^{2}$, $u_{n} \!\rightarrow \!u$ strongly in $L^{r}\left(\mathbb{R}^{N}\right)$ for $r \!\in\!\left(2,4^{*}\right)$, and a.e. in $\R^N$. Now, since $\left\{u_{n}\right\}$ is a Palais-Smale sequence of $\left.E_{\mu}\right|_{S_{a}}$, by the Lagrange multipliers rule there exists $\lambda_{n} \in \mathbb{R}$ such that
\begin{equation} \label{eq4.2}
\int_{\mathbb{R}^{N}} \Delta u_{n} \cdot \Delta {\varphi}- \mu \int_{\mathbb{R}^{N}}  \nabla u_{n} \cdot \nabla {\varphi}-\lambda_{n} \int_{\mathbb{R}^{N}} u_{n}{\varphi}-\int_{\mathbb{R}^{N}} \left|u_{n}\right|^{p-2} u_{n} {\varphi}=o_n(1)(|| \varphi||_{H^2})
\end{equation}
for every $\varphi \in H^{2}$, where $o_n(1) \rightarrow 0$ as $n \rightarrow \infty$. In particular, take $\varphi=u_n$, then
$$\lambda_{n} a^{2}=||\Delta u_{n}||_{2}^{2}-\mu ||\nabla u_{n}||_{2}^{2}-||u_{n}||_{p}^{p}+o_n(1).$$
and the boundedness of $\left\{u_{n}\right\}$ in $H^{2} \cap L^{p} $ implies that $\left\{\lambda_{n}\right\}$ is bounded as well; thus, up to a subsequence $\lambda_{n} \rightarrow \lambda \in \mathbb{R} .$

\noindent (3) We claim that $\lambda<0$ and $u\not \equiv 0$. Recalling that $P_{\mu}\left(u_{n}\right) \rightarrow 0$, we have
$$\lambda_{n} a^{2}=-\frac{\mu}{2}||\nabla u_{n}||_{2}^{2}+(\gamma_p-1)||u_{n}||_{p}^{p}+o_n(1).$$
Let $n\to+\infty$, then
\begin{equation} \label{eq4.001}
\lambda a^{2}=-\frac{\mu}{2} \lim_{n\to+\infty} ||\nabla u_{n}||_{2}^{2}+(\gamma_p-1)||u||_{p}^{p}.
\end{equation}
Since $\mu>0$ and $0<\gamma_{p}<1$, we deduce that $\lambda \leq 0$. If $\lambda_{n} \rightarrow 0$, we have $\mathop {\lim }\limits_{n  \to \infty}||\nabla u_{n}||_{2}^{2}=0=\mathop {\lim }\limits_{n  \to \infty} ||u_{n}||_{p}^{p}$. Using again $P_{\mu}\left(u_{n}\right) \rightarrow 0$, we have
$$ E_{\mu}\left(u_{n}\right)=-\frac{\mu}{4} ||\nabla u_{n}||_{2}^{2}+\frac{p \gamma_{p}-2} {2p}||u_{n}||_{p}^{p}+o_n(1) \rightarrow 0. $$
A contradiction with $\mathop {\lim }\limits_{n  \to \infty} E_{\mu}\left(u_{n}\right)=c>0$ and
thus $\lambda_{n} \rightarrow \lambda\!<\!0$. Next,we show that $u\not \equiv 0$. Otherwise, if $u\equiv 0$, we deduce from (\ref{eq4.001}) that $\lambda a^{2}=-\frac{\mu}{2} \mathop {\lim }\limits_{n  \to \infty}||\nabla u_{n}||_{2}^{2}$, but then we have a contradiction $0<c=\mathop {\lim }\limits_{n  \to \infty} E_{\mu}\left(u_{n}\right)=-\frac{\mu}{4} \mathop {\lim }\limits_{n  \to \infty}||\nabla u_{n}||_{2}^{2}+\frac{p \gamma_{p}-2} {2p}||u||_{p}^{p}=\frac{\lambda a^{2}}{2}<0$.

\noindent (4) Lower bound of $\mathop {\lim }\limits_{n  \to \infty}{||\Delta u_n||}_2$ and $\lambda<-\frac{{\mu}^2}{4}$.

Since $P_{\mu}(u_n)\to0$ , we deduce that
\begin{align*}
0<c&=\lim_{n\to+\infty}E_{\mu}\left(u_{n}\right)=-\frac{1}{2} \lim_{n\to+\infty} ||\Delta u_{n}||_{2}^{2}+\frac{p \gamma_{p}-1} {p}\lim_{n\to+\infty}||u_{n}||_{p}^{p} .
\end{align*}
Therefore,
\begin{equation} \label{eq4.01}
\frac{1}{2} \mathop {\lim }\limits_{n  \to \infty}||\Delta u_n||_{2}^{2}\leq \frac{p \gamma_{p}-1} {p} \mathop {\lim }\limits_{n  \to \infty}||u_n||_{p}^{p}\leq \frac{p \gamma_{p}-1} {p} C_{N,p}^p a^{p(1-\gamma_p)} \mathop {\lim }\limits_{n  \to \infty}{||\Delta u_n||}_2^{p\gamma_p}.
\end{equation}
Thus we obtain a positive lower bound of $\mathop {\lim }\limits_{n  \to \infty}{||\Delta u_n||}_2$ by
$$
 \mathop {\lim }\limits_{n  \to \infty}{||\Delta u_n||}_2\geq {\Big[ \frac{p}{2(p\gamma_p-1)C_{N,p}^pa^{p(1-\gamma_p)}} \Big]}^{\frac{1}{p\gamma_p-2}} .
$$
Case (i): If $\frac{1}{2}\!<\!\gamma_{p}\!<\!1$, then $2\gamma_{p}\!-\!1\!>\!0$. From $P_{\mu}(u_n)\!\to\!0$, we have $\mathop {\lim }\limits_{n  \to \infty}{||u_n||}_p^p\!\leq\! \frac{1}{\gamma_p}\mathop {\lim }\limits_{n  \to \infty}{||\Delta u_n||}_2^2$. Therefore
\begin{align*}
\lambda a^2&=-\mathop {\lim }\limits_{n  \to \infty}||\Delta u_n||_{2}^{2}+(2\gamma_p-1)\mathop {\lim }\limits_{n  \to \infty}||u_n||_{p}^{p} \leq \frac{\gamma_p-1}{\gamma_p} \mathop {\lim }\limits_{n  \to \infty}{||\Delta u_n||}_2^2 \\
&\leq \frac{\gamma_p-1}{\gamma_p} {\Big[ \frac{p}{2(p\gamma_p-1)C_{N,p}^pa^{p(1-\gamma_p)}} \Big]}^{\frac{2}{p\gamma_p-2}}.
\end{align*}
Since ${\mu}^{p\gamma_p-2}{a}^{p-2}<{C}^{*}(N,p)$, we deduce that $\lambda<-\frac{{\mu}^2}{4}$.  \\
Case (ii): If $0\!<\!\gamma_{p}\leq\!\frac{1}{2}$, then $2\gamma_{p}-\!1\!\leq\!0$. Inequality (\ref{eq4.01}) gives $\mathop {\lim }\limits_{n  \to \infty}{||u_n||}_p^p \!\geq\! \frac{p}{2(p\gamma_p-1)}\mathop {\lim }\limits_{n  \to \infty}{||\Delta u_n||}_2^2$.
From $P_{\mu}(u_n)\to0$, we have
\begin{align*}
\lambda a^2 &=-\mathop {\lim }\limits_{n  \to \infty}||\Delta u_n||_{2}^{2}+(2\gamma_p-1)\mathop {\lim }\limits_{n  \to \infty}||u_n||_{p}^{p}  \\
&\leq\frac{(2-p)}{2(p\gamma_p-1)} \mathop {\lim }\limits_{n  \to \infty}{||\Delta u_n||}_2^2 \leq \frac{(2-p)}{2(p\gamma_p-1)} {\Big[ \frac{p}{2(p\gamma_p-1)C_{N,p}^pa^{p(1-\gamma_p)}} \Big]}^{\frac{2}{p\gamma_p-2}}.
\end{align*}
Since ${\mu}^{p\gamma_p-2}{a}^{p-2}<{C}_{*}(N,p)$, we deduce that $\lambda<-\frac{{\mu}^2}{4}$.


\noindent (5) We claim that $u_{n} \rightarrow u$ strongly in $H^{2}$.

By the convergence of $u_{n} \rightharpoonup u\not \equiv 0$ weakly in $H^{2}$ and  (\ref{eq4.2}), we have
$$ d E_{\mu}(u) \varphi-\lambda \int_{\mathbb{R}^{N}} u {\varphi}=0,~~~~\forall \varphi \in H^2(\mathbb{R}^{N}). $$
Choosing $\varphi=u_{n}-u$ in (\ref{eq4.2}), and subtracting, we obtain
$$
\left(d E_{\mu}\left(u_{n}\right)-d E_{\mu}(u)\right)\left[u_{n}-u\right]-\lambda \int_{\mathbb{R}^{N}}\left|u_{n}-u\right|^{2}=o_n(1).
$$
Using the strong $L^p$ convergence of $\{u_n\}$, we infer that
\begin{equation} \label{eq4.02}
||\Delta( u_{n}-u)||_2^{2}-\mu||\nabla( u_{n}-u)||_2^{2}-\lambda ||u_{n}-u||_2^{2}=o_n(1).
\end{equation}
But $u_{n} \rightharpoonup u$ in $H^2(\R^N)$ dose not imply that $\nabla u_{n} \rightarrow \nabla u$ in $L^2(\R^N)$. From (\ref{eq4.02}), we have
\begin{align*}
||\Delta( u_{n}\!-\!u)||_2^{2}\!-\!\lambda ||u_{n}\!-\!u||_2^{2}\!=\!\mu||\nabla( u_{n}\!-\!u)||_2^{2}\!+\!o_n(1)
\!\leq\! \mu||\Delta( u_{n}\!-\!u)||_2|| u_{n}\!-\!u||_2\!+\!o_n(1).
\end{align*}
So we can assume that $||\Delta( u_{n}-u)||_2^{2}\geq\delta$ and $||u_{n}-u||_2^{2}\geq\delta$ for some $\delta>0$, otherwise compactness holds.
It results that
\begin{align} \label{ali4.02}
2\sqrt{-\lambda} \leq \frac{||\Delta( u_{n}-u)||_2}{||u_{n}-u||_2}-\lambda \frac{||u_{n}-u||_2}{||\Delta( u_{n}-u)||_2} \leq \mu+o_n(1)  \Longrightarrow -\frac{{\mu}^2}{4} \leq \lambda <0,
\end{align}
which contradicts with $\lambda<-\frac{{\mu}^2}{4}$. Consequently, we deduce that
$$||\Delta( u_{n}-u)||_2^{2}\to 0 ,\mbox{and}~~~~||u_{n}-u||_2^{2}\to 0 ~~~~\mbox{as}~~~~ n \to +\infty.$$
\end{proof}

\begin{lemma} \label{lem4.2}
Assume that $5\!\leq\! N\!\leq\!8$ and $\overline{p}\!<\!p\!<\!4$, or $N\!>\!8$ and $\overline{p}\!<\!p\!<\!4^*$, $a,\mu\!>\!0$ such that
${\mu}^{p\gamma_p-2}{a}^{p-2}\!<\!\tilde{C}(N,p)$. Let $\left\{u_{n}\right\} \subset S_{a, r}$ be a Palais-Smale
sequence for $\left.E_{\mu}\right|_{S_{a}}$ at level $c<-\frac{a^2\mu^2}{8}$ with $$P_{\mu}\left(u_{n}\right) \rightarrow 0~~~~\mbox{as}~~~~n\to+\infty.$$
Then up to a subsequence $u_{n} \rightarrow u$ strongly in $H^{2}$, and $u \in S_{a}$ is a real-valued radial solution to (1.1) for some $\lambda<-\frac{{\mu}^2}{4}$. We also have $\lambda \! >\! -\frac{(p\gamma_p\!-\!1)\mu^2}{4(p\gamma_p\!-\!2)} \!+\!(\gamma_{p}\!-\!1)C_{N,p}^p \Big[\frac{(p\gamma_{p}\!-\!1)}{2(p\gamma_{p}\!-\!2)}\Big]^{p\gamma_{p}} {\mu}^{p\gamma_p}a^{p-2}$.
\end{lemma}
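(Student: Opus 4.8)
The plan is to reproduce the five-step scheme of Lemma \ref{lem4.1}, observing that a negative level only affects the sign-sensitive steps. Steps (1)--(2), namely the $H^2$-boundedness of $\{u_n\}$ and the extraction of a weak limit $u_n\rightharpoonup u$ in $H_{rad}^2$ with $u_n\to u$ in $L^r$ for $r\in(2,4^*)$ and bounded Lagrange multipliers $\lambda_n\to\lambda$, carry over verbatim since they rely only on $P_\mu(u_n)\to0$, the interpolation inequality \eqref{eqa2.3.1} and the compact embedding $H_{rad}^2\hookrightarrow L^r$. Thus the real work lies in proving $u\not\equiv0$ and $\lambda<-\frac{\mu^2}{4}$ from the sharp level $c<-\frac{a^2\mu^2}{8}$, in carrying out the strong convergence, and in extracting the stated lower bound for $\lambda$.

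First I would show $u\not\equiv0$, which is exactly where the threshold is used. If $u\equiv0$, then $||u_n||_p^p\to0$ by compactness, so $P_\mu(u_n)\to0$ forces $2\lim||\Delta u_n||_2^2=\mu\lim||\nabla u_n||_2^2$; feeding this into \eqref{eqa2.3.1} gives $\lim||\Delta u_n||_2\le\frac{\mu a}{2}$ and hence $\lim||\nabla u_n||_2^2\le\frac{\mu a^2}{2}$. Using the identity $E_\mu(u_n)-\frac14 P_\mu(u_n)=-\frac{\mu}{4}||\nabla u_n||_2^2+\frac{p\gamma_p-2}{2p}||u_n||_p^p$, I then get $c=\lim E_\mu(u_n)=-\frac{\mu}{4}\lim||\nabla u_n||_2^2\ge-\frac{\mu^2 a^2}{8}$, contradicting $c<-\frac{a^2\mu^2}{8}$. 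Hence $u\not\equiv0$, and letting $n\to\infty$ in the relation $\lambda_n a^2=-\frac{\mu}{2}||\nabla u_n||_2^2+(\gamma_p-1)||u_n||_p^p$ (obtained by testing with $u_n$ and using $P_\mu(u_n)\to0$) gives $\lambda a^2=-\frac{\mu}{2}\lim||\nabla u_n||_2^2+(\gamma_p-1)||u||_p^p<0$.

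The decisive bound $\lambda<-\frac{\mu^2}{4}$ I would obtain by an elimination that avoids any lower bound on $||\Delta u_n||_2$: testing \eqref{eq4.2} with $\varphi=u_n$ gives $\lambda_n a^2=||\Delta u_n||_2^2-\mu||\nabla u_n||_2^2-||u_n||_p^p+o_n(1)$, while $2E_\mu(u_n)=||\Delta u_n||_2^2-\mu||\nabla u_n||_2^2-\frac2p||u_n||_p^p$; subtracting cancels the $\Delta$ and $\nabla$ terms and leaves $\lambda_n a^2=2E_\mu(u_n)+\frac{2-p}{p}||u_n||_p^p+o_n(1)$. Passing to the limit with strong $L^p$-convergence gives $\lambda a^2=2c+\frac{2-p}{p}||u||_p^p$, and since $p>2$ and $||u||_p^p\ge0$ this forces $\lambda a^2\le2c<-\frac{a^2\mu^2}{4}$. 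With $\lambda<-\frac{\mu^2}{4}$ established, step (5) is identical to Lemma \ref{lem4.1}: \eqref{eq4.02} together with \eqref{eqa2.3.1} gives $||\Delta(u_n-u)||_2^2-\lambda||u_n-u||_2^2\le\mu||\Delta(u_n-u)||_2||u_n-u||_2+o_n(1)$, so if compactness failed the estimate \eqref{ali4.02} would force $-\frac{\mu^2}{4}\le\lambda$, a contradiction; hence $u_n\to u$ in $H^2$ and $u\in S_a$ solves \eqref{eq1.1} for this $\lambda<-\frac{\mu^2}{4}$.

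For the final lower bound I would use the location of $u$ on the Pohozaev set. Since $u$ solves \eqref{eq1.1} with $\lambda<0$, Proposition \ref{pro3.01} gives $u\in\mathcal{P}_{a,\mu}$; as $E_\mu(u)=c<0$ while $E_\mu\ge0$ on $\mathcal{P}_{-}^{a,\mu}$ by Corollary \ref{coro3.5} and Lemma \ref{lem3.8}, necessarily $u\in\mathcal{P}_{+}^{a,\mu}$. The defining inequality $4||\Delta u||_2^2>\mu||\nabla u||_2^2+2p\gamma_p^2||u||_p^p$ combined with $P_\mu(u)=0$ yields $||\Delta u||_2^2>\gamma_p(p\gamma_p-1)||u||_p^p$, whence $\mu||\nabla u||_2^2=2||\Delta u||_2^2-2\gamma_p||u||_p^p>\frac{2(p\gamma_p-2)}{p\gamma_p-1}||\Delta u||_2^2$; together with $\mu||\nabla u||_2^2\le\mu a||\Delta u||_2$ from \eqref{eqa2.3.1} this gives $||\Delta u||_2<\frac{(p\gamma_p-1)}{2(p\gamma_p-2)}\mu a=:M$. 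Finally, inserting \eqref{eqa2.3.1} and \eqref{2.9} (with $\gamma_p-1<0$) into $\lambda a^2=-\frac{\mu}{2}||\nabla u||_2^2+(\gamma_p-1)||u||_p^p$ gives $\lambda a^2\ge-\frac{\mu a}{2}||\Delta u||_2+(\gamma_p-1)C_{N,p}^p a^{p(1-\gamma_p)}||\Delta u||_2^{p\gamma_p}$, and since the right-hand side is strictly decreasing in $||\Delta u||_2$, evaluating at $M$ and dividing by $a^2$ produces the claimed inequality. The main obstacle is precisely the passage to $\lambda<-\frac{\mu^2}{4}$: unlike the case $c>0$ of Lemma \ref{lem4.1}, a negative level supplies no free lower bound on $||\Delta u_n||_2$, and it is the sharp threshold $c<-\frac{a^2\mu^2}{8}$ from Lemma \ref{lema3.4.1} that renders the elimination identity $\lambda a^2=2c+\frac{2-p}{p}||u||_p^p$ conclusive.
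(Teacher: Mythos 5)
Your proposal is correct, and for most of the argument it coincides with the paper's own proof: steps (1)--(2) are imported from Lemma \ref{lem4.1} verbatim, exactly as the paper does; your identity $\lambda a^{2}=2c+\frac{2-p}{p}\|u\|_{p}^{p}\leq 2c<-\frac{a^{2}\mu^{2}}{4}$ is the same algebra by which the paper combines \eqref{eq4.5} and \eqref{eq4.6} to get $\lambda<-\frac{\mu^{2}}{4}$ (you merely derive it without invoking $P_{\mu}$); your proof that $u\not\equiv 0$ reaches the contradiction $c\geq-\frac{a^{2}\mu^{2}}{8}$ directly from $P_{\mu}(u_{n})\to 0$ plus interpolation, where the paper instead assumes $u\equiv0$ and contradicts the already-established bound $\lambda<-\frac{\mu^{2}}{4}$ by an AM--GM argument (both are sound and use the threshold in the same way); and the strong-convergence step via \eqref{eq4.02} and \eqref{ali4.02} is identical.

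The genuine divergence is in the final lower bound for $\lambda$, and there your route is in fact cleaner than the paper's. The paper stays at the level of the sequence: it derives \eqref{li4.11} from $P_{\mu}(u_{n})\to0$ and asserts that \eqref{eq4.6} and \eqref{li4.11} give the upper bound $\lim_{n}\|\Delta u_{n}\|_{2}<\frac{(p\gamma_{p}-1)\mu a}{2(p\gamma_{p}-2)}$ of \eqref{li4.12}, which it then inserts into \eqref{eq4.5}. But what \eqref{li4.11}, the interpolation bound $\|\nabla u_{n}\|_{2}^{2}\leq a\|\Delta u_{n}\|_{2}$ and $c<-\frac{a^{2}\mu^{2}}{8}$ actually yield is the quadratic inequality $(p\gamma_{p}-2)D^{2}-(p\gamma_{p}-1)\mu a D+\frac{p\gamma_{p}a^{2}\mu^{2}}{4}<0$ for $D=\lim_{n}\|\Delta u_{n}\|_{2}$, whose roots are $\frac{\mu a}{2}$ and $\frac{p\gamma_{p}\mu a}{2(p\gamma_{p}-2)}$; so the ingredients the paper cites only justify the weaker upper bound with numerator $p\gamma_{p}$, not $p\gamma_{p}-1$. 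Your argument sidesteps this: after strong convergence you place $u$ in $\mathcal{P}_{+}^{a,\mu}$ using $E_{\mu}(u)=c<0$ and Corollary \ref{coro3.5} (here you should also cite Lemma \ref{lem3.3}, since ruling out $\mathcal{P}_{-}^{a,\mu}$ alone does not exclude $\mathcal{P}_{0}^{a,\mu}$; it is its emptiness that forces $u\in\mathcal{P}_{+}^{a,\mu}$), and the strict inequality defining $\mathcal{P}_{+}^{a,\mu}$ combined with $P_{\mu}(u)=0$ and interpolation gives precisely $\|\Delta u\|_{2}<\frac{(p\gamma_{p}-1)\mu a}{2(p\gamma_{p}-2)}$. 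Feeding this into $\lambda a^{2}=-\frac{\mu}{2}\|\nabla u\|_{2}^{2}+(\gamma_{p}-1)\|u\|_{p}^{p}$ via \eqref{eqa2.3.1} and \eqref{2.9} produces exactly the constant stated in the lemma. In short: same compactness scheme, but your Pohozaev-component argument legitimately establishes the sharp constant that the paper's own derivation of \eqref{li4.12} does not fully justify.
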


\begin{proof}
Similar to the proof of Lemma \ref{lem4.1}, we can easily get steps (1) and (2), that is,\\
\noindent (1) $\{u_n\}$ is bounded in $H^2(\mathbb{R}^{N})$ and $u_{n} \rightharpoonup u$ weakly in $H^{2}(\mathbb{R}^{N})$. \\
\noindent (2) $\exists$ Lagrange multipliers $\lambda_{n} \rightarrow \lambda \in \mathbb{R}$. Moreover, we have
\begin{equation} \label{eq4.2.1}
\int_{\mathbb{R}^{N}} \Delta u_{n} \cdot \Delta {\varphi}- \mu \int_{\mathbb{R}^{N}}  \nabla u_{n} \cdot \nabla {\varphi}-\lambda_{n} \int_{\mathbb{R}^{N}} u_{n}{\varphi}-\int_{\mathbb{R}^{N}} \left|u_{n}\right|^{p-2} u_{n} {\varphi}=o_n(1)(|| \varphi||_{H^2})
\end{equation}
for every $\varphi \in H^{2}$, where $o_n(1) \rightarrow 0$ as $n \rightarrow \infty$. In particular, take $\varphi=u_n$, then
\begin{equation*}
\lambda_{n} a^{2}=||\Delta u_{n}||_{2}^{2}-\mu ||\nabla u_{n}||_{2}^{2}-||u_{n}||_{p}^{p}+o_n(1).
\end{equation*}

\noindent (3) We claim that $\lambda <-\frac{{\mu}^2}{4}$ and $u\not \equiv 0$.

From $P_{\mu}(u_n)=o_n(1)$ and $\lambda_n \to \lambda$, we have
\begin{equation} \label{eq4.5}
\lambda a^2=-\frac{\mu}{2} \lim_{n\to+\infty} ||\nabla u_n||_{2}^{2}+(\gamma_{p}-1) \lim_{n\to+\infty} ||u_n||_{p}^{p}
\end{equation}
and
\begin{equation} \label{eq4.6}
-\frac{\mu}{4} \mathop {\lim }\limits_{n  \to \infty}||\nabla u_{n}||_{2}^{2}+\frac{p \gamma_{p}-2} {2p} \mathop {\lim }\limits_{n  \to \infty}||u_{n}||_{p}^{p}=\lim_{n\to+\infty}E_{\mu}\left(u_{n}\right)=c
<-\frac{a^2\mu^2}{8}.
\end{equation}
Therefore, (\ref{eq4.5}) and (\ref{eq4.6}) lead to
\begin{equation*}
\lambda a^2=-\frac{\mu}{2}\lim_{n\to+\infty}||\nabla u_n||_{2}^{2}+(\gamma_{p}-1)\lim_{n\to+\infty}||u_n||_{p}^{p}
<-\frac{a^2\mu^2}{4},
\end{equation*}
which implies that $\lambda <-\frac{{\mu}^2}{4}$. Next,we show that $u\not \equiv 0$. Otherwise, if $u\equiv 0$, we have
 \begin{align*}
\lambda a^{2}=\mathop {\lim }\limits_{n  \to \infty}||\Delta u_{n}||_{2}^{2}-\mu \mathop {\lim }\limits_{n  \to \infty}||\nabla u_{n}||_{2}^{2}-||u||_{p}^{p} \geq \mathop {\lim }\limits_{n  \to \infty}||\Delta u_{n}||_{2}^{2}-\mu a \mathop {\lim }\limits_{n  \to \infty}||\Delta u_{n}||_{2},
\end{align*}
but then we have a contradiction
\begin{align*}
\mu\geq \frac{\mathop {\lim }\limits_{n  \to \infty}||\Delta u_{n}||_{2}}{a}-\lambda \frac{a}{\mathop {\lim }\limits_{n  \to \infty}||\Delta u_{n}||_{2}}\geq 2 \sqrt{-\lambda}>\mu.
\end{align*}

\noindent (4) We claim that $u_{n} \rightarrow u$ strongly in $H^{2}$.


If $u_{n} \not \rightarrow u$ strongly in $H^{2}$, we can proceed as in (\ref{ali4.02}) and get $-\frac{{\mu}^2}{4} \leq \lambda <0$. Therefore, it must be $u_{n} \rightarrow u$ strongly in $H^{2}$. Finally, we also deduce from $P_{\mu}(u_n)=o_n(1)$ that
 \begin{align} \label{li4.11}
\left(\frac{1}{2}\!-\!\frac{1}{p\gamma_{p} }\right) \mathop {\lim }\limits_{n  \to \infty} ||\Delta u_{n}||_{2}^{2}\!-\!\frac{\mu}{2}
\left(1\!-\!\frac{1}{p\gamma_{p}}\right) \mathop {\lim }\limits_{n  \to \infty}||\nabla u_{n}||_{2}^{2}\!=\!\lim_{n\to+\infty}E_{\mu}\left(u_{n}\right)\!=\!c
\!<\!-\frac{a^2\mu^2}{8}.
\end{align}
Then, \eqref{eq4.6} and \eqref{li4.11} imply that
\begin{align} \label{li4.12}
\frac{\mu a}{2} < \mathop {\lim }\limits_{n  \to \infty} ||\Delta u_{n}||_{2} < \frac{(p\gamma_{p}-1)\mu a}{2(p\gamma_{p}-2)}.
\end{align}
By using (\ref{eq4.5}), the lower bound of $\lambda$ follows directly from the fact that
\begin{align*}
\lambda a^2
&\geq -\frac{\mu a}{2} \lim_{n\to+\infty} ||\Delta u_n||_{2}+(\gamma_{p}-1) C_{N,p}^p a^{p(1-\gamma_p)} \lim_{n\to+\infty}{||\Delta u_n||}_2^{p\gamma_p}\\
&> -\frac{(p\gamma_p-1)\mu^2 a^2}{4(p\gamma_p-2)} +(\gamma_{p}-1)C_{N,p}^p \Big[\frac{(p\gamma_{p}-1)}{2(p\gamma_{p}-2)}\Big]^{p\gamma_{p}} {\mu}^{p\gamma_p}a^{p} .
\end{align*}
\end{proof}

\section{Proof of Theorems \ref{th1.1}-\ref{th1.3}}

In this section, we first prove the existence results, i.e. Theorem \ref{th1.1}-(1),(2),(3). Next, we prove the asymptotic properties of $m_r(a,\mu)$ and $\sigma(a,\mu)$ as $\mu \!\to\! 0^{+}$, i.e. Theorem \ref{th1.1}-(4),(5). Finally, we prove the asymptotic properties of $m_r(a,\mu)$ as $a \!\to\! 0^{+}$, i.e. Theorem \ref{th1.3}.

The proof of existence results in Theorem \ref{th1.1} is divided into two parts. Firstly, we prove the existence of a local minimizer for $\left.E_{\mu}\right|_{S_{a}}$. Secondly, we construct a mountain pass type critical point for $\left.E_{\mu}\right|_{S_{a}}$. The later relies on a refined version of the min-max principle by N. Ghoussoub \cite{GN}, the forth coming Lemma \ref{lem3.10}, and was already applied in \cite{NSoa,nsoa}.

\begin{definition}\label{def3.9}
Let $B$ be a closed subset of $X$. We shall say that a class $\mathcal{F}$ of compact subsets of $X$ is a homotopy-stable family with extended boundary $B$ if for any set $A$ in $\mathcal{F}$ and any $\eta\in C([0,1]\times X;X)$ satisfying $\eta(t,x)=x$ for all $(t,x)\in (\{0\}\times X)\cup ([0,1]\times B)$ we have that $\eta(\{1\}\times A)\in \mathcal{F}$.
\end{definition}

\begin{lemma}(\cite{GN}, Theorem 5.2)\label{lem3.10}
Let $\varphi$ be a $C^{1}$-functional on a complete connected $C^{1}$-Finsler manifold $X$ and consider a homotopy-stable family $\mathcal{F}$ with an extended closed boundary $B$. Set $c=c(\varphi,\mathcal{F})$ and let $F$ be a closed subset of $X$ satisfying \\
$(1)$~~~~~~~~$(A \cap F)\backslash B \neq \emptyset \quad \text { for every } A \in \mathcal{F}$ \\
$(2)$~~~~~~~~$\sup \varphi(B) \leq c \leq \inf \varphi(F)$.  \\
Then, for any sequence of sets $(A_{n})_{n}$ in $\mathcal{F}$ such that $\lim_{n}\sup_{A_{n}}\varphi=c$, there exists a sequence $(x_{n})_{n}$ in $X$ such that
$$\lim_{n \rightarrow +\infty}\varphi(x_{n})=c,\ \ \lim_{n \rightarrow +\infty}\|d\varphi(x_{n})\|=0,\ \ \lim_{n \rightarrow +\infty}dist(x_{n},F)=0,\ \ \lim_{n \rightarrow +\infty}dist(x_{n},A_{n})=0.$$
\end{lemma}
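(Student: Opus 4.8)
Since this statement is quoted verbatim as Theorem~5.2 of \cite{GN}, the plan is not to reprove it from scratch but to indicate the mechanism by which such a localized minimax principle is established; the argument is a deformation argument combined with Ekeland's variational principle applied on the space of admissible sets. First I would record the minimax value $c=c(\varphi,\mathcal{F}):=\inf_{A\in\mathcal{F}}\sup_{x\in A}\varphi(x)$ and observe that hypotheses $(1)$ and $(2)$ make $F$ a genuine \emph{dual set}: since every $A\in\mathcal{F}$ meets $F$ outside $B$ and $\varphi\geq c$ on $F$ while $\varphi\leq c$ on $B$, the supremum of $\varphi$ over any $A\in\mathcal{F}$ is at least $c$, and for a nearly optimal $A$ it is essentially attained on $A\cap F$.

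The core of the argument is by contradiction. Assuming the conclusion fails for the prescribed almost-minimizing sequence $(A_n)_n$, one can find $\varepsilon>0$ and a neighborhood $\mathcal{N}$ of $F$ such that $\|d\varphi(x)\|$ is bounded below on the band $\{x:\ |\varphi(x)-c|\leq\varepsilon\}\cap\mathcal{N}$ near the sets $A_n$. On this region I would build a locally Lipschitz pseudo-gradient vector field for $\varphi$ and integrate the associated negative flow $\eta\in C([0,1]\times X;X)$, multiplied by a Lipschitz cutoff that vanishes on $B$ and off the band. The condition $\sup\varphi(B)\leq c$ is exactly what allows the cutoff to fix $B$ pointwise, so that $\eta(t,\cdot)=\mathrm{id}$ on $(\{0\}\times X)\cup([0,1]\times B)$ and the homotopy-stability of $\mathcal{F}$ applies, giving $\eta(\{1\}\times A_n)\in\mathcal{F}$. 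The flow then strictly lowers $\sup_{A_n}\varphi$ below $c$, contradicting the definition of $c$ as the infimum of $\sup_A\varphi$ over $\mathcal{F}$.

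The cleaner route, which avoids ad hoc cutoffs and yields the two distance estimates simultaneously, is to apply Ekeland's variational principle directly to the functional $A\mapsto\sup_A\varphi$ on $(\mathcal{F},\delta)$, with $\delta$ a Hausdorff-type metric. This produces a nearly stationary set $\tilde{A}_n$ with $\delta(\tilde{A}_n,A_n)\to 0$; testing the Ekeland slope inequality against short-time deformations of $\tilde{A}_n$ forces a point $x_n$ in the max-set $\{x\in\tilde{A}_n:\ \varphi(x)=\sup_{\tilde A_n}\varphi\}$ with $\varphi(x_n)\to c$ and $\|d\varphi(x_n)\|\to 0$. The estimate $\mathrm{dist}(x_n,A_n)\to 0$ follows from $\delta(\tilde{A}_n,A_n)\to 0$, while $\mathrm{dist}(x_n,F)\to 0$ follows from the dual-set remark of the first paragraph.

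The delicate point, and the reason this refined principle is stronger than the classical minimax theorem, is precisely the localization $\mathrm{dist}(x_n,F)\to 0$, which I expect to be the main obstacle. One must show that the maximum of $\varphi$ over a near-optimal set is asymptotically carried by the part lying in $F\setminus B$ rather than elsewhere in $A_n$, and that the flow can be arranged to leave the points of $B$ untouched even when $\varphi$ equals $c$ there. Both issues are resolved by combining $\sup\varphi(B)\leq c\leq\inf\varphi(F)$ with $A\cap F\setminus B\neq\emptyset$, which is exactly the content of hypotheses $(1)$ and $(2)$; the full technical execution is carried out in \cite{GN}.
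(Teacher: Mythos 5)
The paper offers no proof of this lemma at all---it is quoted verbatim as Theorem 5.2 of \cite{GN} and used as a black box---so your decision to defer the full technical execution to that reference, while sketching the underlying mechanism, is exactly aligned with the paper's treatment. Your sketch is moreover consistent with Ghoussoub's actual argument (Ekeland's variational principle applied to $A \mapsto \sup_{A}\varphi$ on $\mathcal{F}$ endowed with a Hausdorff-type metric, together with homotopy-stability and the dual-set hypotheses to force the localization near $F$), so there is nothing to correct.
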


We also need the following result, where $T_uS_a$ denotes the tangent space to $S_a$ in $u$.
\begin{lemma} (\cite{NSoa}, Lemma 5.8) \label{lam5.3}
For $u \in S_a$ and $s\in \mathbb{R}$ the map
$$
T_{u}S_a \rightarrow T_{s \star u}S_a, \quad \varphi \mapsto s \star \varphi
$$
is a linear isomorphism with inverse $\psi \mapsto(-s) \star \psi$.  \\
\end{lemma}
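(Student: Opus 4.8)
The plan is to reduce everything to two elementary facts about the dilation $\star$ defined in \eqref{eq1.12}: that $\varphi\mapsto s\star\varphi$ is a continuous linear automorphism of $H^2(\R^N)$ with inverse $\psi\mapsto(-s)\star\psi$, and that this action is an isometry for the $L^2$-pairing. First I would record that $s\star$ is linear (immediate from the pointwise formula $(s\star\varphi)(x)=e^{\frac{N}{2}s}\varphi(e^s x)$) and that it maps $H^2$ into itself: the change of variables $y=e^s x$ yields $\|s\star\varphi\|_2=\|\varphi\|_2$, $\|\nabla(s\star\varphi)\|_2^2=e^{2s}\|\nabla\varphi\|_2^2$ and $\|\Delta(s\star\varphi)\|_2^2=e^{4s}\|\Delta\varphi\|_2^2$, so $s\star$ is bounded on $H^2$. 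A direct computation from the same formula gives $(-s)\star(s\star\varphi)=\varphi$ and $s\star((-s)\star\psi)=\psi$ for all $\varphi,\psi\in H^2$, so $s\star$ is a linear isomorphism of $H^2$ with inverse $(-s)\star$.

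Second I would verify the $\star$-invariance of the $L^2$ inner product. For $u,\varphi\in H^2$, the change of variables $y=e^s x$ gives
$$\int_{\R^N}(s\star u)(s\star\varphi)\,dx=e^{Ns}\int_{\R^N}u(e^s x)\varphi(e^s x)\,dx=\int_{\R^N}u\varphi.$$
Since $T_uS_a=\{\varphi\in H^2:\int_{\R^N}u\varphi=0\}$, this identity shows that $\int_{\R^N}u\varphi=0$ if and only if $\int_{\R^N}(s\star u)(s\star\varphi)=0$; that is, $\varphi\in T_uS_a$ if and only if $s\star\varphi\in T_{s\star u}S_a$ (here $s\star u\in S_a$ because $\star$ preserves the $L^2$-norm, so $T_{s\star u}S_a$ makes sense).

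Combining the two facts finishes the argument: restricting the automorphism $s\star$ of $H^2$ to the subspace $T_uS_a$ produces a linear map into $T_{s\star u}S_a$, and its inverse $(-s)\star$ maps $T_{s\star u}S_a$ back into $T_{(-s)\star(s\star u)}S_a=T_uS_a$, so the restricted map is a bijection with the stated inverse. There is no genuine obstacle here, since the statement is purely algebraic once the change of variables is in hand; the only point requiring a modicum of care is checking that $s\star$ preserves $H^2$ (and not merely $L^2$), which is guaranteed by the scaling identities for $\|\nabla(s\star\varphi)\|_2$ and $\|\Delta(s\star\varphi)\|_2$ recorded above.
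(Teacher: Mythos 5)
Your proof is correct. Note that this paper contains no proof of the lemma at all --- it is quoted verbatim from \cite{NSoa} (Lemma 5.8) --- so there is no internal argument to compare against; what you wrote is precisely the standard verification used in that reference: the scaling identities $\|s\star\varphi\|_2=\|\varphi\|_2$, $\|\Delta(s\star\varphi)\|_2=e^{2s}\|\Delta\varphi\|_2$ show that $s\star$ is a bounded linear automorphism of $H^2(\R^N)$ with inverse $(-s)\star$, the change of variables $y=e^sx$ shows invariance of the $L^2$-pairing, and since $T_uS_a=\{\varphi\in H^2(\R^N):\int_{\R^N}u\varphi=0\}$, that invariance is exactly the statement that $s\star$ carries $T_uS_a$ into $T_{s\star u}S_a$, with bijectivity following by applying the same observation to $(-s)\star$ at the point $s\star u$.
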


\noindent \textbf{Proof of Theorem \ref{th1.1}-(1),(2),(3):}

We choose $S_{a,r}=S_a \cap H_{{rad}}^{2} $ as our work space. Since the
functional $E_{\mu}$ is invariant under rotation, a critical point (resp. a Palais-Smale sequence) for $\left.E_{\mu}\right|_{S_{a,r}}$ yields a real-valued radial critical point (resp. Palais-Smale sequence) for $\left.E_{\mu}\right|_{S_a}$(See Theorem 1.28 in \cite{Mwlm}).

\noindent \textbf{(i) Existence of a local minimizer.}

Let us consider a minimizing sequence $\{v_n\}$ for $\left.E_{\mu}\right|_{A^r_{R_{0}}}$. For every $n$ we can take $s_{v_{n}} \star v_{n} \in S_{a,r} \cap \mathcal{P}_{+}^{a,\mu}$, observing that then
by Lemma \ref{lem3.4} and Corollary \ref{coro3.5} $||\Delta\left(s_{v_{n}} \star v_{n}\right)||_{2}<R_{0}$ and
$$E_{\mu}\left(s_{v_{n}} \star v_{n}\right)=\min \left\{E_{\mu}(s \star v_{n}) : s \in \mathbb{R} \text { and }||\Delta(s \star v_{n})||_{2}<R_{0}\right\} \leq E_{\mu}\left(v_{n}\right);$$
in this way we obtain a new minimizing sequence $\left\{w_{n}=s_{v_{n}} \star v_{n}\right\}$, with
$$w_{n} \in S_{a,r} \cap \mathcal{P}_{+}^{a,\mu}~~~~\text{and}~~~~P_{\mu}(w_{n})=0$$
for every $n$. By Lemma \ref{lem3.6}, $||\Delta w_{n}||_2<R_0-\rho$ for every $n\in\mathbb{N}$ and some $\rho>0$ sufficiently small, and hence the Ekeland's variational principle yields in a standard way the existence of a new minimizing sequence $\left\{u_{n}\right\} \subset A^r_{R_{0}}$ for $m_r(a, \mu)<0$, with the property that $||u_{n}-w_{n}||_{H^2} \rightarrow 0$ as $n \rightarrow +\infty$, which is also a Palais-Smale sequence for $E_{\mu}$ on $S_{a,r}$. The condition $||u_{n}-w_{n}||_{H^2} \rightarrow 0$ implies
$$||\Delta u_{n}||_2 < R_0-\rho~~~~\text{and}~~~~P_{\mu}(u_{n}) \rightarrow 0~~~~\mbox{as}~~~~n \rightarrow \infty.$$
If $p\!<\!4$, we deduce from Lemma \ref{lema3.4.1} that $m_r(a, \mu)\!<\!-\frac{a^2\mu^2}{8}$, and hence $\{u_n\}$ satisfies all the assumptions of Lemma \ref{lem4.2}. Consequently, up to a subsequence $u_{n} \rightarrow \tilde{u}_{\mu}$ strongly in $H^2$, $\tilde{u}_{\mu}$ is an interior local minimizer for $\left.E_{\mu}\right|_{A^r_{R_{0}}}$, and solves (\ref{eq1.1}) for some $\tilde{\lambda}<-\frac{\mu^2}{4}$. We also have $\tilde{\lambda} \! >\! -\frac{(p\gamma_p\!-\!1)\mu^2}{4(p\gamma_p\!-\!2)} \!+\!(\gamma_{p}\!-\!1)C_{N,p}^p \big[\frac{(p\gamma_{p}\!-\!1)}{2(p\gamma_{p}\!-\!2)}\big]^{p\gamma_{p}} {\mu}^{p\gamma_p}a^{p-2}$ by Lemma \ref{lem4.2}.

Since any critical point of
$E_{\mu}|_{S_{a,r}}$ lies in $\mathcal{P}_{a,\mu}\cap S_{a,r}$ and $m_r(a, \mu)=\inf _{\mathcal{P}_{a,\mu}\cap S_{a,r}} E_{\mu}$ ( see Lemma \ref{lem3.6}), we see that $\tilde{u}_{\mu}$ is a radial ground state for $\left.E_{\mu}\right|_{S_{a,r}}$. It only remains to prove that any radial ground state of $E_{\mu}|_{S_{a,r}}$ is a local minimizer of $E_{\mu}$ in $A^r_{R_0}$. Let then $u$ be a radial critical point of $E_{\mu}|_{S_{a,r}}$ with $E_{\mu}(u)=m_r(a, \mu)=\inf_{\mathcal{P}_{a,\mu} \cap S_{a,r} } E_{\mu}$. Since $E_{\mu}(u)<0<\inf_{\mathcal{P}^{a,\mu}_{-}\cap S_{a,r}} E_{\mu}$, necessarily $u \in \mathcal{P}^{a,\mu}_{+}\cap S_{a,r}$. Then Corollary \ref{coro3.5} implies that $\mathcal{P}^{a,\mu}_{+}\cap S_{a,r}\subset A^r_{R_0}$, it results that $||\Delta u||_2<R_0$, and as a consequence $u$ is a local minimizer for $E_{\mu}|_{A^r_{R_0}}$.

\noindent \textbf{(ii) Existence of a Mountain pass type solution.}

We focus now on the existence of a second critical point for $\left.E_{\mu}\right|_{S_a}$. Denote $E_{\mu}^{c}=\{u \in S_a : E_{\mu}(u) \leq c\}$. Motivated by \cite{LjEa}, we define the augmented functional $\tilde{E_{\mu}}: \mathbb{R} \times  H^{2} \rightarrow \mathbb{R}$
$$\tilde{E_{\mu}}(s,u):=E_{\mu}(s \star u)=E_{\mu}(s \star u)=\frac{e^{4s}}{2} {||\Delta u||}_2^2-\frac{\mu}{2}e^{2s} {||\nabla u||}_2^2-\frac{e^{2p\gamma_{p} s}}{p}{||u||}_p^p$$
and consider the restriction $\tilde{E_{\mu}}|_{\mathbb{R}\times {S_a} }$. Notice that $S_{a,r}=H_{{rad}}^{2} \cap S_a$ and $\tilde{E_{\mu}}$ is of class $C^1$. Moreover, 
a Palais-Smale sequence for $\tilde{E_{\mu}}|_{\mathbb{R}\times {S_{a,r}} }$ is a Palais-Smale sequence for $\tilde{E_{\mu}}|_{\mathbb{R}\times {S_a} }$(See Theorem 1.28 in \cite{Mwlm}).

We introduce the minimax class
$$
\Gamma :=\left\{\gamma(\tau)=\big(\zeta(\tau),\beta(\tau)\big) \in C\left([0,1], \mathbb{R}\times S_{a,r}\right) ; \gamma(0) \in (0,\mathcal{P}_{+}^{a,\mu}), \gamma(1) \in (0, E_{\mu}^{2m_r(a, \mu)})\right\}.
$$
The family $\Gamma$ is not empty. Indeed, for every $u \in S_{a,r}$, by Lemma \ref{lem3.4} we know that there exists $s_{1} \gg 1$ such that
\begin{equation} \label{uqe6.1}
\gamma_{u} : \tau \in[0,1] \mapsto   \big(0,\left((1-\tau) s_{u}+\tau s_{1}\right) \star u \big ) \in \mathbb{R}\times S_{a,r}
\end{equation}
is a path in $\Gamma$ (recall that $s \in \mathbb{R} \mapsto s \star u \in S_{a,r}$ is continuous, $s_{u} \star u \in \mathcal{P}_{+}^{a,\mu}$ and  $E_{\mu}(s \star u) \rightarrow-\infty$ as $s \rightarrow +\infty$). Thus, the minimax value
$$
\sigma(a, \mu) :=\inf _{\gamma \in \Gamma} \max _{(s,u) \in \gamma([0,1])} \tilde{E}_{\mu}(s,u)
$$
is a real number. We claim that
\begin{equation} \label{uqe6.2}
\forall \gamma \in \Gamma~~~~\text{there exists}~~~~\tau_{\gamma} \in (0,1)~~~~\text{such that}~~~~\zeta(\tau_{\gamma}) \star \beta(\tau_{\gamma}) \in \mathcal{P}_{-}^{a,\mu}.
\end{equation}
Indeed, since $\gamma(0)=\big(\zeta(0),\beta(0)\big) \in (0,\mathcal{P}_{+}^{a,\mu})$, by Corollary \ref{coroll2.1} and Lemma \ref{lem3.4}, we have $t_{\zeta(0)\star \beta(0)}\!=\!t_{\beta(0)}\!>\!s_{\beta(0)}\!=\!0$; since $E_{\mu}(\beta(1))\!=\!\tilde{E}_{\mu}(\gamma(1)) \!\leq \!2 m_r(a, \mu)$, by Lemma \ref{lem3.7}, we have
$$t_{\zeta(1)\star \beta(1)}=t_{\beta(1)}<0,$$
and moreover the map $t_{\zeta(\tau)\star \beta(\tau)}$ is continuous in $\tau$(we refer again to Lemma \ref{lem3.4} and recall that $s \in \mathbb{R} \mapsto s \star u \in S_{a,r}$ is continuous). It follows that for every $\gamma \in \Gamma$ there exists $\tau_{\gamma} \in(0,1)$ such that $t_{\zeta(\tau_{\gamma})\star \beta(\tau_{\gamma})}=0$, and so $\zeta(\tau_{\gamma}) \star \beta(\tau_{\gamma}) \in \mathcal{P}_{-}^{a,\mu}$. Thus (\ref{uqe6.2}) holds.

For every $\gamma \in \Gamma$, by (\ref{uqe6.2}) we have
\begin{equation} \label{uqe6.3}
\max _{\gamma([0,1])} \tilde{E}_{\mu} \geq \tilde{E}_{\mu} \left(\gamma\left(\tau_{\gamma}\right)\right)={E}_{\mu}(\zeta(\tau_{\gamma}) \star \beta(\tau_{\gamma})) \geq \inf _{\mathcal{P}^{a,\mu}_{-\cap S_{a,r}}} E_{\mu},
\end{equation}
which gives $\sigma(a, \mu) \geq \inf _{\mathcal{P}_{-}^{a,\mu} \cap S_{a,r}} E_{\mu}$. On the other hand, if $u \in \mathcal{P}^{a,\mu}_{-} \cap S_{a,r},$ then $\gamma_{u}$ defined in (\ref{uqe6.1}) is a path in $\Gamma$ with
$$
E_{\mu}(u)=\tilde{E}_{\mu}(0,u)=\max _{\gamma_u([0,1])} \tilde{E}_{\mu} \geq \sigma(a, \mu),
$$
which gives $ \inf _{\mathcal{P}^{a,\mu}_{-} \cap S_{a,r}} E_{\mu} \geq \sigma(a, \mu)$. This, Corollary \ref{coro3.5} and Lemma \ref{lem3.8} imply that
\begin{equation}  \label{uqe6.4}
\sigma(a, \mu)\!=\!\inf _{\mathcal{P}^{a,\mu}_{-} \cap S_{a,r}} E_{\mu}\!>\!0 \!\geq\! \sup _{\left(\mathcal{P}^{a,\mu}_{+} \cup E_{\mu}^{2 m_r(a, \mu)}\right) \cap S_{a,r}} E_{\mu}\!=\!\sup _{\left((0,\mathcal{P}^{a,\mu}_{+}) \cup (0,E_{\mu}^{2 m_r(a, \mu)})\right) \cap  (\mathbb{R}\times S_{a,r})}  \tilde{E}_{\mu}.
\end{equation}

Let $\gamma_n(\tau)=\big(\zeta_n(\tau),\beta_n(\tau)\big)$ be any minimizing sequence for $\sigma(a, \mu)$ with the property that $\zeta_n(\tau)\equiv 0$ for every $\tau \in[0, 1]$ (Notice that, if $\{\gamma_n=\big(\zeta_n,\beta_n\big)\}$ is a minimizing sequence, then also $\{(0,\zeta_n\star {\beta_n})\}$ has the same property). Take $$X=\mathbb{R}\times S_{a,r},~~~~\mathcal{F}=\{\gamma([0,1]):~~\gamma \in \Gamma\},~~~~B=(0,\mathcal{P}_{+}^{a,\mu}) \cup (0, E_{\mu}^{2m_r(a, \mu)}),$$
$$F=\{(s,u)\in\mathbb{R}\times S_{a,r} ~~~~|~~~~\tilde{E}_{\mu}(s,u)\geq \sigma(a, \mu) \},~~~~A=\gamma([0,1]),~~~~A_n=\gamma_n([0,1])$$
in Lemma \ref{lem3.10}. We need to checked that $\mathcal{F}$ is a homotopy stable family of compact subsets of $X$ with extended closed boundary $B$, and that $F$ is a dual set for $\mathcal{F}$, in the sense that assumptions (1) and (2) in Lemma \ref{lem3.10} are satisfied.

Indeed, since $\sigma(a, \mu)=\inf _{\mathcal{P}^{a,\mu}_{-} \cap S_{a,r}} E_{\mu}$, (\ref{uqe6.3}) $\Rightarrow \gamma\left(\tau_{\gamma}\right)=(\zeta(\tau_{\gamma}), \beta(\tau_{\gamma}))\in A \cap F$, (\ref{uqe6.4}) $\Rightarrow F \cap B=\emptyset$ and (2) in Lemma \ref{lem3.10}, then $A \cap F\not=\emptyset$ and $F \cap B=\emptyset$ give (1) in Lemma \ref{lem3.10}. For every $ \gamma \in \Gamma$, since $\gamma(0) \in (0,\mathcal{P}_{+}^{a,\mu})$ and $\gamma(1) \in (0, E_{\mu}^{2m_r(a, \mu)})$, we have $\gamma(0), \gamma(1) \in B$. Then for any set $A$ in $\mathcal{F}$ and any $\eta\in C([0,1]\times X;X)$
satisfying $\eta(t,x)=x$ for all $(t,x)\in (\{0\}\times X)\cup ([0,1]\times B)$, it holds that $\eta(1,\gamma(0))=\gamma(0),~~~~\eta(1,\gamma(1))=\gamma(1)$. So we have $\eta(\{1\}\times A)\in \mathcal{F}$.

Consequently, by Lemma \ref{lem3.10}, there exists a Palais-Smale sequence $\{(s_n,w_n)\}\subset \mathbb{R}\times S_{a,r}$ for $\tilde{E_{\mu}}|_{\mathbb{R}\times {S_{a,r}} }$ at level $\sigma(a, \mu)>0$ such that
\begin{equation}  \label{uqe6.5}
\partial_{s} \tilde{E_{\mu}}\left(s_{n}, w_{n}\right) \rightarrow 0 \quad \text { and } \quad\left\|\partial_{u} \tilde{E_{\mu}}\left(s_{n}, w_{n}\right)\right\|_{\left(T_{w_{n}} S_{a,r}\right)^{*}} \rightarrow 0 \quad \text { as } n \rightarrow \infty,
\end{equation}
with the additional property that
\begin{equation}   \label{uqe6.6}
\left|s_{n}\right|+\operatorname{dist}_{H^{2}}\left(w_{n}, \beta_{n}([0,1])\right) \rightarrow 0 \quad \text { as } n \rightarrow \infty.
\end{equation}
The first condition in (\ref{uqe6.5}) reads $P_{\mu}\left(s_n \star w_{n}\right) \rightarrow 0$, and the second condition in (\ref{uqe6.5}) gives
\begin{equation} \label{uqe6.7}
\begin{gathered}
e^{4s_n} \int_{\mathbb{R}^{N}} \Delta w_{n} \cdot \Delta {\varphi}\!-\! \mu e^{2s_n} \int_{\mathbb{R}^{N}}  \nabla w_{n} \cdot \nabla {\varphi}\!-\!e^{2p\gamma_{p} s_n}\int_{\mathbb{R}^{N}} \left|w_{n}\right|^{p-2} w_{n} {\varphi}\!=\!o_n(1)(|| \varphi||_{H^2})
\end{gathered}
\end{equation}
for any $\varphi \in T_{w_{n}} S_{a,r}$. Since $\left|s_{n}\right|$ is bounded , due to (\ref{uqe6.6}), we have
\begin{equation} \label{uqe6.8}
d E_{\mu}\left(s_{n} \star w_{n}\right)\left[s_{n} \star \varphi\right]=o_n(1)\|\varphi\|_{H^2}=o_n(1)\left\|s_{n} \star \varphi\right\|_{H^2}~~\text{as}~~n \rightarrow \infty, \forall \varphi \in T_{w_{n}} S_{a,r}.
\end{equation}
By Lemma \ref{lam5.3}, (\ref{uqe6.8}) implies that $\{u_n:=s_n\star w_n\} \subset S_{a,r}$ is a Palais-Smale sequence for $E_{\mu}|_{S_{a,r}}$ (thus a Palais-Smale sequence for $E_{\mu}|_{S_{a}}$, since the problem is invariant under rotations) at level $\sigma(a,\mu)>0$, with $P_{\mu}(u_n)\to0$. Therefore, all the assumptions of Lemma \ref{lem4.1} are satisfied, and we deduce that up to a subsequence $u_{n} \rightarrow \hat{u}_{\mu}$ strongly in $H^2$, with $\hat{u}_{\mu} \in S_{a,r}$ real-valued radial solution to $(1.1)$ for some $\hat{\lambda}<-\frac{\mu^2}{4}$.

Suppose in addition that $N\!<\!8$ and $p\!<\!\min \{\frac{2(N-2)}{N-4},4\}$, then we deduce from $\tilde{\lambda}, \hat{\lambda}<-\frac{{\mu}^2}{4}$ that $\tilde{u}_{\mu}$ and $\hat{u}_{\mu}$ are sign-changing by Theorem 3.7 in \cite{DBon}, which is also adopted in \cite{dbJB} to obtain radial sign-changing normalized solutions to \eqref{eq1.1} with $\mu < 0$. \qed \\

\begin{remark} \label{mak1.6}
As a byproduct of Theorem \ref{th1.1}, we obtain the following decay results by Theorem 3.10 in \cite{DBon}. Let ${\lambda}<-\frac{{\mu}^2}{4}$ and $u$ be a classical solution to
$$   {\Delta}^{2}u+\mu \Delta u-{\lambda}u={|u|}^{p-2}u~~~~\text{in}~~~~\R^{N}~~~~\text{and}~~~~\lim_{|x| \to\infty}u(x)=0,  $$
then we have $|u(x)|\leq \frac{C}{\sqrt{-\mu^2-4\lambda}}
e^{-(\frac{\sqrt{2\sqrt{-\lambda}+\mu}}{2}-\varepsilon)|x|}$
for any $\varepsilon>0$ and $|x|$ large enough, where $C>0$ is a constant.\\
\end{remark}

To obtain the asymptotic property of $\sigma(a, \mu)$ as $\mu \to 0^{+}$, we need to study equation (\ref{eq1.1}) with $\mu=0$. We consider once again the Pohozaev manifold $\mathcal{P}_{a,0}$, defined in Section 2 and the decomposition $\mathcal{P}_{a,0}=\mathcal{P}_{+}^{a,0} \cup \mathcal{P}_{0}^{a,0} \cup \mathcal{P}_{-}^{a,0}$. Following the argument in the previous Section 2 or section 6 of \cite{NSoa} , we can prove the following Lemmas \ref{lem6.1}-\ref{lem6.4}.
\begin{lemma}\label{lem6.1}
Let $N\geq2$, $\overline{p}<p<4^*$ and $a>0$. Then $\mathcal{P}_{0}^{a, 0}=\emptyset$, and $\mathcal{P}_{a,0}$ is a smooth manifold of codimension 2 in $H^{2}(\R^N)$.
\end{lemma}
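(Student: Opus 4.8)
The plan is to transcribe the proof of Lemma~\ref{lem3.3} to the degenerate case $\mu=0$. The argument in fact \emph{simplifies}: the smallness hypothesis ${\mu}^{p\gamma_p-2}{a}^{p-2}<\tilde{C}(N,p)$ becomes vacuous when $\mu=0$ and is never invoked. Two things must be checked: that $\mathcal{P}_{0}^{a,0}=\emptyset$, and that $\mathcal{P}_{a,0}=\{u\in H^2(\R^N):P_{0}(u)=0,\ G(u)=0\}$, with $G(u)={||u||}_2^2-a^2$, is a regular common level set, i.e. that the differential $(dG(u),dP_{0}(u)):H^2(\R^N)\to{\R}^2$ is surjective at every $u\in\mathcal{P}_{a,0}$.

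For the first claim, suppose $u\in\mathcal{P}_{0}^{a,0}$. By definition this means $P_{0}(u)=2{||\Delta u||}_2^2-2\gamma_p{||u||}_p^p=0$ together with $\left(\Psi_{u}^{0}\right)''(0)=8{||\Delta u||}_2^2-4p\gamma_p^2{||u||}_p^p=0$. Eliminating ${||u||}_p^p$ between these two identities yields $2{||\Delta u||}_2^2=p\gamma_p{||\Delta u||}_2^2$. Since $u\in S_a$ with $a>0$, one cannot have ${||\Delta u||}_2=0$ (otherwise $u$ would be an $L^2$ harmonic function, hence $u\equiv0$ by a Liouville argument, contradicting ${||u||}_2=a$), so necessarily $p\gamma_p=2$. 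This is impossible because $p\gamma_p=\frac{N(p-2)}{4}>2$ for $p>\overline{p}=2+\frac{8}{N}$. Hence $\mathcal{P}_{0}^{a,0}=\emptyset$. Observe that, in contrast with Lemma~\ref{lem3.3}, this step is purely algebraic and requires no Gagliardo--Nirenberg bound on ${||\Delta u||}_2$, precisely because the term $\mu{||\nabla u||}_2^2$ drops out.

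For the manifold claim I would argue exactly as in Lemma~\ref{lem3.3}. It suffices to produce, for every $u\in\mathcal{P}_{a,0}$, a direction $\varphi\in T_uS_a$ with $dP_{0}(u)[\varphi]\neq0$; once this is available, testing $(dG(u),dP_{0}(u))$ against $\alpha\varphi+\beta u$ gives a linear system in $(\alpha,\beta)$ that is solvable for all $(x,y)\in{\R}^2$, whence surjectivity. Suppose, for contradiction, that $dP_{0}(u)[\varphi]=0$ for every $\varphi\in T_uS_a$. Then $u$ is a constrained critical point of $P_{0}$ on $S_a$, so by the Lagrange multipliers rule there exists $\nu\in\R$ with $4\Delta^2u-\nu u=2p\gamma_p|u|^{p-2}u$ in $\R^N$. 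Applying the Pohozaev identity of Proposition~\ref{pro3.01} (with $\mu=0$) to this equation produces $8{||\Delta u||}_2^2-4p\gamma_p^2{||u||}_p^p=0$, that is $\left(\Psi_{u}^{0}\right)''(0)=0$; combined with $P_{0}(u)=0$ this says $u\in\mathcal{P}_{0}^{a,0}$, contradicting the emptiness just established.

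I expect no serious obstacle here, since the whole scheme is inherited from Lemma~\ref{lem3.3}. The only points deserving a line of care are the Liouville-type remark excluding ${||\Delta u||}_2=0$ on $S_a$, and the bookkeeping of the constants when inserting the Euler--Lagrange equation $4\Delta^2u-\nu u=2p\gamma_p|u|^{p-2}u$ into Proposition~\ref{pro3.01}, so that the resulting Pohozaev relation coincides exactly with the defining identity $\left(\Psi_{u}^{0}\right)''(0)=0$ of $\mathcal{P}_{0}^{a,0}$.
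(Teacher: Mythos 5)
Your proposal is correct and follows exactly the route the paper intends: the paper gives no separate proof of Lemma~\ref{lem6.1}, stating only that it follows ``by the argument in Section 2,'' i.e.\ by transcribing the proof of Lemma~\ref{lem3.3} to $\mu=0$, which is precisely what you do, including the correct observation that the smallness condition ${\mu}^{p\gamma_p-2}{a}^{p-2}<\tilde{C}(N,p)$ becomes unnecessary since eliminating ${\|u\|}_p^p$ now forces $p\gamma_p=2$, impossible for $p>\overline{p}$. Both halves (the algebraic emptiness of $\mathcal{P}_{0}^{a,0}$ and the Lagrange-multiplier-plus-Pohozaev surjectivity argument) match the paper's scheme, so nothing further is needed.
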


\begin{lemma}\label{lem6.2}
Let $N\geq2$, $\overline{p}<p<4^*$ and $a>0$. For every $u \in S_{a}$, there exists a unique $t_{u} \in \mathbb{R}$ such that $t_{u} \star u \in \mathcal{P}_{a,0}$. $t_{u}$ is the unique critical point of the function $\Psi_{u}^{0}$, and is a strict maximum point at positive level. Moreover: \\
\noindent $(1)$ $\mathcal{P}_{a,0}=\mathcal{P}_{-}^{a,0}$.\\
\noindent $(2)$ $\Psi_{u}^{0}$ is strictly decreasing and concave on $\left(t_{u},+\infty\right)$.\\
$(3)$ The maps $u \in S_{a} \mapsto t_{u} \in \mathbb{R}$ are of class $C^1$.\\
$(4)$ If $P_{0}(u)<0$, then $t_{u}<0$.
\end{lemma}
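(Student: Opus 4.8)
The plan is to exploit the drastic simplification that occurs when $\mu=0$: the middle term of the fiber map drops out, leaving only two competing power-type terms. Concretely, I would begin from
$$\Psi_u^0(s)=\frac{e^{4s}}{2}\|\Delta u\|_2^2-\frac{e^{2p\gamma_p s}}{p}\|u\|_p^p,\qquad (\Psi_u^0)'(s)=2e^{4s}\|\Delta u\|_2^2-2\gamma_p e^{2p\gamma_p s}\|u\|_p^p=P_0(s\star u),$$
and note that $p>\bar p$ forces $p\gamma_p>2$, so the two exponents obey $4<2p\gamma_p$. Setting $(\Psi_u^0)'(s)=0$ thus reduces to the single exponential identity $e^{(2p\gamma_p-4)s}=\|\Delta u\|_2^2/(\gamma_p\|u\|_p^p)$, which has a unique real root $t_u$; by Corollary \ref{coroll2.1} this is exactly the unique $s$ with $s\star u\in\mathcal{P}_{a,0}$. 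To see $t_u$ is a strict maximum at positive level I would track the sign of $(\Psi_u^0)'$: as $s\to-\infty$ the $e^{4s}$-term dominates, so $(\Psi_u^0)'>0$ and $\Psi_u^0\to0^+$, whereas as $s\to+\infty$ the $e^{2p\gamma_p s}$-term dominates, so $(\Psi_u^0)'<0$ and $\Psi_u^0\to-\infty$; uniqueness of the critical point then makes $t_u$ a strict global maximum. Substituting the critical relation yields $\Psi_u^0(t_u)=\left(\tfrac12-\tfrac{1}{p\gamma_p}\right)e^{4t_u}\|\Delta u\|_2^2>0$.

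For (1) I would argue directly rather than through the second-derivative sign: on $\mathcal{P}_{a,0}$ one has $\|\Delta u\|_2^2=\gamma_p\|u\|_p^p$, so $4\|\Delta u\|_2^2=4\gamma_p\|u\|_p^p<2p\gamma_p^2\|u\|_p^p$ precisely because $p\gamma_p>2$, which is the defining inequality of $\mathcal{P}_-^{a,0}$; combined with $\mathcal{P}_0^{a,0}=\emptyset$ from Lemma \ref{lem6.1}, this gives $\mathcal{P}_{a,0}=\mathcal{P}_-^{a,0}$. For (2), strict monotonicity on $(t_u,+\infty)$ is immediate from $(\Psi_u^0)'<0$ there. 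Concavity requires a short computation: $(\Psi_u^0)''(s)=8e^{4s}\|\Delta u\|_2^2-4p\gamma_p^2 e^{2p\gamma_p s}\|u\|_p^p$ vanishes at a unique point $s^\ast$ with $e^{(2p\gamma_p-4)s^\ast}=2\|\Delta u\|_2^2/(p\gamma_p^2\|u\|_p^p)$, and comparing with the equation for $t_u$ gives $e^{(2p\gamma_p-4)(s^\ast-t_u)}=2/(p\gamma_p)<1$, hence $s^\ast<t_u$; therefore $(\Psi_u^0)''<0$ on $(t_u,+\infty)$ and the function is strictly concave there.

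Part (3) follows from the implicit function theorem applied to $\Phi(s,u):=(\Psi_u^0)'(s)$, using $\Phi(t_u,u)=0$ and $\partial_s\Phi(t_u,u)=(\Psi_u^0)''(t_u)<0\neq0$, exactly as in Lemma \ref{lem3.4}. For (4), the sign pattern already established, namely $(\Psi_u^0)'>0$ on $(-\infty,t_u)$ and $(\Psi_u^0)'<0$ on $(t_u,+\infty)$, shows that $P_0(u)=(\Psi_u^0)'(0)<0$ can hold only if $0$ lies to the right of $t_u$, i.e.\ $t_u<0$. I do not anticipate a serious obstacle here: the statement is a degenerate, easier version of Lemmas \ref{lem3.1} and \ref{lem3.4}, and the vanishing of the $\mu$-term both removes the need for the condition ${\mu}^{p\gamma_p-2}a^{p-2}<\tilde C(N,p)$ and collapses every transcendental equation into a single monomial identity. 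The only point demanding genuine care is the concavity claim in (2), where one must verify that the unique inflection point lies \emph{strictly} to the left of $t_u$ rather than merely at or near it; this is what the ratio computation $2/(p\gamma_p)<1$ secures.
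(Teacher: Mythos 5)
Your proof is correct and takes essentially the same route the paper intends: the paper gives no separate proof of Lemma \ref{lem6.2}, deferring instead to ``the argument in the previous Section 2'' (Lemmas \ref{lem3.1} and \ref{lem3.4}) and to Section 6 of \cite{NSoa}, and your write-up is exactly that fiber-map analysis specialized to $\mu=0$, where the critical-point equation collapses to a single monomial identity. The explicit solution for $t_u$, the sign analysis of $(\Psi_u^0)'$, the inflection-point comparison $2/(p\gamma_p)<1$ giving concavity on $(t_u,+\infty)$, and the direct verification that $\|\Delta u\|_2^2=\gamma_p\|u\|_p^p$ forces the strict $\mathcal{P}_-^{a,0}$ inequality are all sound.
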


\begin{lemma}\label{lem6.3}
Let $N\!\geq\!2$, $\overline{p}\!<\!p\!<\!4^*$ and $a\!>\!0$. It results that $m(a, 0) :=\!\inf _{u \in \mathcal{P}_{a,0}} E_{0}(u)\!>\!0$.
\end{lemma}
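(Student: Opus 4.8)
The plan is to exploit the fact that on the Pohozaev manifold $\mathcal{P}_{a,0}$ the constraint $P_0(u)=0$ collapses the energy $E_0(u)=\frac12\|\Delta u\|_2^2-\frac1p\|u\|_p^p$ into a single positive term proportional to $\|\Delta u\|_2^2$, and then to bound $\|\Delta u\|_2$ uniformly away from zero over $\mathcal{P}_{a,0}$ via the Gagliardo--Nirenberg inequality \eqref{2.9}. Together these give a strictly positive lower bound on $E_0$ throughout $\mathcal{P}_{a,0}$, hence $m(a,0)>0$. (This is the $\mu=0$ analogue of Lemma \ref{lem3.8}, using that $\mathcal{P}_{a,0}=\mathcal{P}_{-}^{a,0}$ by Lemma \ref{lem6.2}.)

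First I would fix $u\in\mathcal{P}_{a,0}$, so that $P_0(u)=2\|\Delta u\|_2^2-2\gamma_p\|u\|_p^p=0$, which yields $\|u\|_p^p=\gamma_p^{-1}\|\Delta u\|_2^2$. Substituting this into $E_0$ gives the key identity
$$
E_0(u)=\Big(\frac12-\frac{1}{p\gamma_p}\Big)\|\Delta u\|_2^2.
$$
Here the coefficient is strictly positive precisely because $p>\overline p=\frac8N+2$ is equivalent to $p\gamma_p>2$, i.e. $\frac{1}{p\gamma_p}<\frac12$. Thus every element of $\mathcal{P}_{a,0}$ already has positive energy, and it remains only to make this bound uniform.

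Next I would use \eqref{2.9} together with $\|u\|_2=a$ in the constraint: from $\|\Delta u\|_2^2=\gamma_p\|u\|_p^p\le\gamma_p C_{N,p}^p a^{p(1-\gamma_p)}\|\Delta u\|_2^{p\gamma_p}$ and $p\gamma_p-2>0$ one extracts the uniform lower bound
$$
\|\Delta u\|_2\ge\Big[\frac{1}{\gamma_p C_{N,p}^p a^{p(1-\gamma_p)}}\Big]^{\frac{1}{p\gamma_p-2}}=:K>0.
$$
Combining this with the identity above gives $E_0(u)\ge\big(\frac12-\frac{1}{p\gamma_p}\big)K^2>0$ for all $u\in\mathcal{P}_{a,0}$, and taking the infimum proves $m(a,0)\ge\big(\frac12-\frac{1}{p\gamma_p}\big)K^2>0$.

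There is no serious obstacle here: the argument is essentially a bookkeeping exercise tying together the Pohozaev relation and Gagliardo--Nirenberg. The only point deserving attention is that the $L^2$-supercritical assumption $p>\overline p$ (equivalently $p\gamma_p>2$) is used twice and in the same spirit both times: it makes the reduced energy coefficient $\frac12-\frac{1}{p\gamma_p}$ positive, and it forces the Gagliardo--Nirenberg estimate to produce a positive \emph{lower} bound on $\|\Delta u\|_2$ (rather than an upper bound, as would occur in the $L^2$-subcritical regime). This sign reversal is exactly what prevents the infimum from being $0$ or $-\infty$ and is the crux of the statement.
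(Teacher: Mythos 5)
Your proof is correct, but it takes a more direct route than the paper. The paper never proves Lemma \ref{lem6.3} from scratch: it refers back to the argument of Section 2, namely the $\mu=0$ analogue of Lemma \ref{lem3.8}, which is geometric --- for $u\in\mathcal{P}_{a,0}=\mathcal{P}_{-}^{a,0}$ one uses the fiber map $\Psi_{u}^{0}$, rescales $u$ to the level $t_{\max}$ where the comparison function $h$ attains its positive maximum, and chains $E_{0}(u)=\Psi_{u}^{0}(0)\geq\Psi_{u}^{0}(\tau_{u})\geq h(t_{\max})>0$; that argument requires the fiber-map analysis of Lemma \ref{lem6.2} as input. You bypass all of this: since $\mu=0$ removes the gradient term, the constraint $P_{0}(u)=0$ lets you eliminate $\|u\|_{p}^{p}$ outright and write $E_{0}$ as a positive multiple of $\|\Delta u\|_{2}^{2}$, after which \eqref{2.9} gives the uniform lower bound $\|\Delta u\|_{2}\geq K$. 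It is worth noting that the two routes produce exactly the same constant: the maximizer of $h_{0}(t)=\frac{1}{2}t^{2}-\frac{C_{N,p}^{p}}{p}a^{p(1-\gamma_{p})}t^{p\gamma_{p}}$ is precisely your $K$, and $h_{0}(K)=\bigl(\frac{1}{2}-\frac{1}{p\gamma_{p}}\bigr)K^{2}$. What your shortcut costs is generality: for $\mu>0$ the same elimination leaves the indefinite term $-\frac{\mu}{2}\bigl(1-\frac{1}{p\gamma_{p}}\bigr)\|\nabla u\|_{2}^{2}$ (compare step (1) in the proof of Lemma \ref{lem4.1}), so the direct computation no longer yields positivity, and the scaling argument restricted to $\mathcal{P}_{-}^{a,\mu}$ is genuinely needed there --- which is why the paper phrases Lemma \ref{lem3.8} that way and then transfers it to $\mu=0$. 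One small point you should make explicit: dividing by $\|\Delta u\|_{2}^{2}$ in the Gagliardo--Nirenberg step requires $\Delta u\not\equiv 0$, which holds because $u\in S_{a}$ is nonzero and no nonzero $L^{2}(\mathbb{R}^{N})$ function is harmonic.
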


\begin{lemma}\label{lem6.4}
Let $N\!\geq\!2$, $\overline{p}\!<\!p\!<\!4^*$ and $a\!>\!0$. There exists $k\!>\!0$ small such that
$$
0<\sup_{\overline{A_{k}}} E_{0}<m(a, 0) \quad \text{and} \quad u\in \overline{A_{k}} \Longrightarrow E_{0}(u)>0,~~~~P_{0}(u)>0,
$$
where $A_{k} :=\left\{u \in S_a :||\Delta u||_{2} < k\right\}$.
\end{lemma}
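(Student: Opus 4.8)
The plan is to reduce both the pointwise positivity statements and the two-sided bound on the supremum to elementary one-variable estimates in the quantity $t=\|\Delta u\|_2$, exploiting that $\mu=0$ kills the gradient term so that $E_0(u)=\frac12\|\Delta u\|_2^2-\frac1p\|u\|_p^p$ and $P_0(u)=2\|\Delta u\|_2^2-2\gamma_p\|u\|_p^p$ depend on $u$ only through $\|\Delta u\|_2$ and $\|u\|_p$. The decisive structural fact is that $p>\overline p$ forces $p\gamma_p=\frac{N(p-2)}{4}>2$, so that after applying the Gagliardo--Nirenberg inequality \eqref{2.9} with $\|u\|_2=a$ the nonlinear contribution is of strictly higher order in $t$ than the quadratic term, hence negligible as $t\to0^+$.

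Concretely, for $u\in S_a$ I would estimate, using \eqref{2.9},
$$E_0(u)\geq \tfrac12 t^2-\tfrac{C_{N,p}^p}{p}a^{p(1-\gamma_p)}t^{p\gamma_p}=t^2\Big(\tfrac12-\tfrac{C_{N,p}^p}{p}a^{p(1-\gamma_p)}t^{p\gamma_p-2}\Big)$$
and similarly
$$P_0(u)\geq 2t^2-2\gamma_p C_{N,p}^p a^{p(1-\gamma_p)}t^{p\gamma_p}=2t^2\Big(1-\gamma_p C_{N,p}^p a^{p(1-\gamma_p)}t^{p\gamma_p-2}\Big).$$
Since $p\gamma_p-2>0$, each bracket is positive as soon as $t$ is smaller than an explicit threshold $t_1$, respectively $t_2$, depending only on $N,p,a$; because every $u\in S_a$ satisfies $t=\|\Delta u\|_2>0$ (as $\Delta u\equiv0$ together with $u\in L^2$ would force $u\equiv0$), this already yields $E_0(u)>0$ and $P_0(u)>0$ for all $u$ with $0<\|\Delta u\|_2<\min\{t_1,t_2\}$.

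For the upper bound I would use the crude estimate $E_0(u)\leq\frac12\|\Delta u\|_2^2\leq\frac12 k^2$, valid for every $u\in\overline{A_k}=\{u\in S_a:\|\Delta u\|_2\leq k\}$ (the inequality $\|\Delta u\|_2\leq k$ on the closure follows from continuity of the norm). Invoking Lemma \ref{lem6.3}, which guarantees $m(a,0)>0$, I then fix
$$0<k<\min\Big\{t_1,\;t_2,\;\sqrt{2\,m(a,0)}\Big\}.$$
With this choice $\sup_{\overline{A_k}}E_0\leq\frac12 k^2<m(a,0)$, while the pointwise bound $E_0>0$ on the nonempty set $\overline{A_k}$ gives $\sup_{\overline{A_k}}E_0>0$; together with the two displayed lower bounds (whose thresholds $k$ respects) this establishes all three conclusions.

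There is no serious obstacle here: the entire statement is a soft consequence of the sign relation $p\gamma_p>2$ and of the strict positivity $m(a,0)>0$ supplied by Lemma \ref{lem6.3}. The only point requiring a word of care is the identification of the closure $\overline{A_k}$ with $\{u\in S_a:\|\Delta u\|_2\leq k\}$ and the fact that $\overline{A_k}$ is nonempty---both immediate, the latter because the $L^2$-preserving dilations $s\star u$ with $s\to-\infty$ send $\|\Delta(s\star u)\|_2=e^{2s}\|\Delta u\|_2$ to $0$.
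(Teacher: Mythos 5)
Your proof is correct and follows essentially the same route as the paper, which (via its reference to the Section~2 argument with $\mu=0$, i.e.\ the function $h$, and to Section~6 of \cite{NSoa}) also reduces everything to the Gagliardo--Nirenberg bound $\|u\|_p^p\leq C_{N,p}^p a^{p(1-\gamma_p)}\|\Delta u\|_2^{p\gamma_p}$ together with $p\gamma_p>2$ and the positivity $m(a,0)>0$ from Lemma~\ref{lem6.3}. Your explicit attention to the facts that $\|\Delta u\|_2>0$ on $S_a$ and that $\overline{A_k}$ is nonempty is a welcome bit of extra care, not a deviation.
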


\begin{lemma} \label{lemma6.5}
Let $N\geq2$, $\overline{p}<p<4^*$ and $a>0$. Then, there exists a real valued radial mountain pass type critical point $u_0$ for $E_0|_{S_a}$ at a positive level
$$\sigma(a,0)=\inf _{{\mathcal{P}_{-}^{a,0}} \cap S_{a,r}} E_0=\inf _{{\mathcal{P}_{a,0}} \cap S_{a,r}} E_0=E_0(u_0).$$
That is, ${u}_{0}$ is a radial ground state to (\ref{eq1.1}) obtained for $\mu=0$.
\end{lemma}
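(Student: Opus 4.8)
The plan is to reproduce the mountain pass construction of Theorem~\ref{th1.1}-(ii), now specialized to $\mu=0$, where the geometry of $E_0|_{S_a}$ is simpler because $\mathcal{P}_{a,0}=\mathcal{P}_{-}^{a,0}$ (Lemma~\ref{lem6.2}) and there is no local minimizer. Working in $\mathbb{R}\times S_{a,r}$ with the augmented functional $\tilde{E}_0(s,u):=E_0(s\star u)$, I would use the small sublevel region $\overline{A_{k}^{r}}$ supplied by Lemma~\ref{lem6.4} (on which $E_0>0$, $P_0>0$ and $\sup E_0<m(a,0)$) to play the role that $\mathcal{P}_{+}^{a,\mu}$ played for $\mu>0$, and define the minimax class
$$\Gamma:=\left\{\gamma=(\zeta,\beta)\in C([0,1],\mathbb{R}\times S_{a,r}):\gamma(0)\in(0,\overline{A_{k}^{r}}),\ \gamma(1)\in(0,E_0^0)\right\},$$
where $E_0^0:=\{u\in S_{a,r}:E_0(u)\le0\}$. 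The class is nonempty: for any $u\in S_{a,r}$ the fiber $\Psi_u^0$ rises strictly from $0^+$ to its unique positive maximum at $t_u$ and decreases to $-\infty$, so the path $\tau\mapsto(0,((1-\tau)s_0+\tau s_1)\star u)$ with $s_0\ll 0$ and $s_1\gg 1$ lies in $\Gamma$. Hence $\sigma(a,0):=\inf_{\gamma\in\Gamma}\max_{\tau}\tilde{E}_0(\gamma(\tau))$ is a well-defined real number.

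The core geometric claim is that every $\gamma\in\Gamma$ crosses $\mathcal{P}_{-}^{a,0}$. Indeed $\beta(0)\in\overline{A_{k}^{r}}$ gives $P_0(\beta(0))>0$, so $(\Psi_{\beta(0)}^0)'(0)>0$ and therefore $t_{\beta(0)}>0$; on the other hand $E_0(\beta(1))\le0$ while $\Psi_{\beta(1)}^0$ is positive up to its maximum point (Lemma~\ref{lem6.2}), which forces $t_{\beta(1)}<0$. By continuity of $u\mapsto t_u$ (Lemma~\ref{lem6.2}-(3)) there is $\tau_\gamma\in(0,1)$ with $t_{\zeta(\tau_\gamma)\star\beta(\tau_\gamma)}=0$, i.e. $\zeta(\tau_\gamma)\star\beta(\tau_\gamma)\in\mathcal{P}_{-}^{a,0}$, whence $\max_\tau\tilde{E}_0(\gamma(\tau))\ge\inf_{\mathcal{P}_{-}^{a,0}\cap S_{a,r}}E_0$. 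The reverse inequality follows by testing with the explicit path through any $u\in\mathcal{P}_{-}^{a,0}\cap S_{a,r}$, along which $\max\tilde{E}_0=E_0(u)$ because $t_u=0$ is the strict maximum of $\Psi_u^0$. Combining these, and using $\mathcal{P}_{a,0}=\mathcal{P}_{-}^{a,0}$ together with Lemma~\ref{lem6.3},
$$\sigma(a,0)=\inf_{\mathcal{P}_{-}^{a,0}\cap S_{a,r}}E_0=\inf_{\mathcal{P}_{a,0}\cap S_{a,r}}E_0\ge m(a,0)>0.$$

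Next I would set $X=\mathbb{R}\times S_{a,r}$, $B=(0,\overline{A_{k}^{r}})\cup(0,E_0^0)$, $F=\{(s,u):\tilde{E}_0(s,u)\ge\sigma(a,0)\}$, and $\mathcal{F}=\{\gamma([0,1]):\gamma\in\Gamma\}$, and verify the hypotheses of Lemma~\ref{lem3.10}: the crossing claim gives $A\cap F\ne\emptyset$ for every $A\in\mathcal{F}$, while Lemma~\ref{lem6.4} yields $\sup_{\overline{A_{k}^{r}}}E_0<m(a,0)\le\sigma(a,0)$ and $E_0\le0<\sigma(a,0)$ on $E_0^0$, so $F\cap B=\emptyset$ and $\sup\tilde{E}_0(B)\le\sigma(a,0)\le\inf\tilde{E}_0(F)$ hold. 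Restricting attention to minimizing sequences with $\zeta_n\equiv0$ (legitimate, since $(0,\zeta_n\star\beta_n)$ has the same maximal value), Ghoussoub's principle produces a Palais--Smale sequence $\{(s_n,w_n)\}$ for $\tilde{E}_0|_{\mathbb{R}\times S_{a,r}}$ at level $\sigma(a,0)$ with $\partial_s\tilde{E}_0\to0$ and $\partial_u\tilde{E}_0\to0$; as in Theorem~\ref{th1.1}-(ii), setting $u_n:=s_n\star w_n$ (with $\{s_n\}$ bounded) gives a radial Palais--Smale sequence for $E_0|_{S_a}$ at the positive level $\sigma(a,0)$ with $P_0(u_n)\to0$.

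The final step is compactness, and here the case $\mu=0$ is genuinely easier than Lemmas~\ref{lem4.1}--\ref{lem4.2}. Boundedness of $\{u_n\}$ in $H^2$ and convergence of the Lagrange multipliers $\lambda_n\to\lambda$ follow exactly as before; since the level is positive, the argument of step~(3) of Lemma~\ref{lem4.1} gives $\lambda<0$ and $u_0\not\equiv0$. Testing the approximate and limiting equations against $u_n-u_0$ and using the compact radial embedding $H_{rad}^2\hookrightarrow L^p$ yields
$$\|\Delta(u_n-u_0)\|_2^2-\lambda\|u_n-u_0\|_2^2=o_n(1),$$
which, with $\lambda<0$ and no obstructing $\mu\|\nabla(u_n-u_0)\|_2^2$ term, forces $u_n\to u_0$ strongly in $H^2$ at once — precisely the point flagged after equation~(\ref{eq1.08}) that this identity alone suffices when $\mu\le0$. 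Thus $u_0\in S_{a,r}$ is a real-valued radial solution of (\ref{eq1.1}) with $\mu=0$ at level $\sigma(a,0)=\inf_{\mathcal{P}_{a,0}\cap S_{a,r}}E_0$, hence a radial ground state. I expect the only delicate bookkeeping to be the min-max setup — choosing $B$ and $F$ so that the separation $\sup\tilde{E}_0(B)\le\sigma(a,0)\le\inf\tilde{E}_0(F)$ holds and that every admissible path meets $\mathcal{P}_{-}^{a,0}$ — whereas the compactness, the usual obstacle when $\mu>0$, is immediate here.
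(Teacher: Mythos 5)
Your proposal is correct and is essentially the paper's own argument: the paper proves Lemma \ref{lemma6.5} by a one-line reference to the mountain-pass construction of Theorem \ref{th1.1}-(2) together with Lemmas \ref{lem6.1}--\ref{lem6.4}, and your adaptation is exactly the intended one --- using $\overline{A_{k}^{r}}$ from Lemma \ref{lem6.4} in place of $\mathcal{P}_{+}^{a,\mu}$ as the extended boundary, $E_0^0$ in place of the negative sublevel set $E_{\mu}^{2m_r(a,\mu)}$, the crossing/intermediate-value argument via $u\mapsto t_u$, and the simplified compactness step where $\|\Delta(u_n-u_0)\|_2^2-\lambda\|u_n-u_0\|_2^2=o_n(1)$ with $\lambda<0$ suffices because the obstructing gradient term is absent when $\mu=0$. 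No gaps.
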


\begin{proof}
Using the arguments in the proof of Theorem \ref{th1.1}-(2) or section 6 in \cite{NSoa} and Lemmas \ref{lem6.1}-\ref{lem6.4}, we can drive the desired results.
\end{proof}

\begin{lemma}\label{lem6.7}
For any $\mu\!>\!0$ satisfying ${\mu}^{p\gamma_p-2}{a}^{p-2}\!<\!\min \{ {C}^{*}(N,p), {C}_{*}(N,p), \tilde{C}(N,p)\}$, we have
$$
\sigma(a, \mu)=\inf _{u \in S_{a, r}} \max _{s \in \mathbb{R}} E_{\mu}(s \star u), \quad \text { and } \quad \sigma(a, 0)=\inf _{u \in S_{a, r}} \max _{s \in \mathbb{R}} E_{0}(s \star u).
$$
\end{lemma}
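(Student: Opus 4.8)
The plan is to exploit the precise description of the fiber maps $\Psi_u^\mu$ furnished by Lemma \ref{lem3.4}, together with the two characterizations of the mountain pass level already established, namely $\sigma(a,\mu)=\inf_{\mathcal{P}_-^{a,\mu}\cap S_{a,r}}E_\mu$ (obtained inside the proof of Theorem \ref{th1.1}-(2)) and $\sigma(a,0)=\inf_{\mathcal{P}_-^{a,0}\cap S_{a,r}}E_0$ (Lemma \ref{lemma6.5}). With these in hand it suffices to prove the single identity
$$\inf_{u\in S_{a,r}}\max_{s\in\mathbb{R}}E_\mu(s\star u)=\inf_{\mathcal{P}_-^{a,\mu}\cap S_{a,r}}E_\mu,$$
together with its $\mu=0$ analogue, and I would do this by a standard double inequality.

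For the direction ``$\geq$'', fix any $u\in S_{a,r}$. Since the dilation $(s\star u)(x)=e^{Ns/2}u(e^sx)$ preserves radiality, one has $s\star u\in S_{a,r}$ for all $s$, and Lemma \ref{lem3.4}-(3) gives
$$\max_{s\in\mathbb{R}}E_\mu(s\star u)=E_\mu(t_u\star u),\qquad t_u\star u\in\mathcal{P}_-^{a,\mu}\cap S_{a,r}.$$
Hence $\max_{s}E_\mu(s\star u)\geq\inf_{\mathcal{P}_-^{a,\mu}\cap S_{a,r}}E_\mu$, and taking the infimum over $u\in S_{a,r}$ yields the claimed bound.

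For the direction ``$\leq$'', take any $v\in\mathcal{P}_-^{a,\mu}\cap S_{a,r}$. By Lemma \ref{lem3.4} the function $\Psi_v^\mu$ has a unique maximum point $t_v$, and since $v\in\mathcal{P}_-^{a,\mu}$ Corollary \ref{coroll2.1} forces $t_v=0$; thus $\max_{s}E_\mu(s\star v)=E_\mu(0\star v)=E_\mu(v)$. Consequently $\inf_{u\in S_{a,r}}\max_{s}E_\mu(s\star u)\leq E_\mu(v)$, and taking the infimum over $v$ gives the reverse bound. Combining the two inequalities with $\sigma(a,\mu)=\inf_{\mathcal{P}_-^{a,\mu}\cap S_{a,r}}E_\mu$ proves the first identity. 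The case $\mu=0$ is handled identically, replacing Lemma \ref{lem3.4} by Lemma \ref{lem6.2} (which states that $\Psi_u^0$ has a unique critical point $t_u$, a global maximum at positive level, with $\mathcal{P}_{a,0}=\mathcal{P}_-^{a,0}$) and invoking Lemma \ref{lemma6.5}.

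I do not expect any real obstacle here: the argument is purely the min--max reduction onto the Pohozaev manifold, and every ingredient is already available from Section 2 and Section 5. The only point requiring a word of care is that $s\star u$ remains in the radial class $S_{a,r}$, so that the projections $t_u\star u$ genuinely lie in $\mathcal{P}_-^{a,\mu}\cap S_{a,r}$ rather than merely in $\mathcal{P}_-^{a,\mu}$; this is immediate from the explicit form of the dilation.
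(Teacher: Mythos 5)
Your proposal is correct and follows essentially the same route as the paper: both directions reduce to the characterization $\sigma(a,\mu)=\inf_{\mathcal{P}_{-}^{a,\mu}\cap S_{a,r}}E_{\mu}$ (from \eqref{uqe6.4}, resp.\ Lemma \ref{lemma6.5} for $\mu=0$) and use the fiber-map projection $t_{u}\star u\in\mathcal{P}_{-}^{a,\mu}$ of Lemma \ref{lem3.4} (resp.\ Lemma \ref{lem6.2}). The only cosmetic difference is that the paper obtains $\sigma(a,\mu)\geq\inf_{u\in S_{a,r}}\max_{s\in\mathbb{R}}E_{\mu}(s\star u)$ by evaluating at the minimizer $\hat{u}_{\mu}$, whereas you take an arbitrary $v\in\mathcal{P}_{-}^{a,\mu}\cap S_{a,r}$, use $t_{v}=0$, and pass to the infimum, which does not even require the infimum to be attained.
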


\begin{proof}
From (\ref{uqe6.4}), we have $\sigma(a, \mu)=\inf _{{\mathcal{P}_{-}^{a, \mu}} \cap S_{a, r}} E_{\mu}=E_{\mu}\left(\hat{u}_{\mu}\right)$. Then, by Lemma \ref{lem3.4},
$$
\sigma(a, \mu)=E_{\mu}\left(\hat{u}_{\mu}\right)=\max _{s \in \mathbb{R}} E_{\mu}\left(s \star \hat{u}_{\mu}\right) \geq \inf _{u \in S_{a,r}} \max _{s \in \mathbb{R}} E_{\mu}(s \star u).
$$
On the other hand, for any $u \in S_{a, r}$ we have $t_{u, \mu} \star u \in \mathcal{P}_{-}^{a, \mu}$, and hence
$$
\max _{s \in \mathbb{R}} E_{\mu}(s \star u)=E_{\mu}\left(t_{u, \mu} \star u\right) \geq \sigma(a, \mu).
$$
By Lemma \ref{lem6.2} and Lemma \ref{lemma6.5}, we can similarly prove the case of $\sigma(a, 0)$. 
\end{proof}

\begin{lemma}\label{lem6.8}
For any $0\leq\mu_{1}<\mu_{2}$, with ${\mu_{2}}$ satisfying $$\mu_{2}^{p\gamma_p-2}{a}^{p-2}<\min \{ {C}^{*}(N,p), {C}_{*}(N,p), \tilde{C}(N,p)\},$$
it results that $\sigma\left(a, \mu_{2}\right) \leq \sigma\left(a, \mu_{1}\right) \leq \sigma(a, 0)$.
\end{lemma}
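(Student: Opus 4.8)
The plan is to deduce the monotonicity directly from the minimax characterization established in Lemma \ref{lem6.7}, exploiting the fact that the map $\mu \mapsto E_{\mu}(w)$ is pointwise nonincreasing for fixed $w$.

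First I would check that the smallness hypothesis on $\mu_2$ is inherited by every smaller parameter. Since $\overline{p}<p<4^*$ forces $p\gamma_p>2$, the exponent $p\gamma_p-2$ is positive, so $\mu\mapsto\mu^{p\gamma_p-2}$ is strictly increasing on $(0,+\infty)$. Hence, whenever $0<\mu_1<\mu_2$, we have $\mu_1^{p\gamma_p-2}a^{p-2}<\mu_2^{p\gamma_p-2}a^{p-2}<\min\{C^{*}(N,p),C_{*}(N,p),\tilde{C}(N,p)\}$, so $\mu_1$ also meets the smallness condition and Lemma \ref{lem6.7} applies to it. (In the boundary case $\mu_1=0$ one uses instead the $\mu=0$ identity provided by Lemma \ref{lem6.7}.)

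Next I would use the explicit form of the energy. For any fixed $w\in H^{2}(\R^N)$,
$$E_{\mu_2}(w)-E_{\mu_1}(w)=-\frac{\mu_2-\mu_1}{2}\,{||\nabla w||}_2^2\leq 0,$$
because $\mu_2>\mu_1$ and ${||\nabla w||}_2^2\geq 0$; thus $E_{\mu_2}(w)\leq E_{\mu_1}(w)$. Applying this to $w=s\star u$ for each $u\in S_{a,r}$ and $s\in\mathbb{R}$ gives $E_{\mu_2}(s\star u)\leq E_{\mu_1}(s\star u)$, whence $\max_{s\in\mathbb{R}}E_{\mu_2}(s\star u)\leq\max_{s\in\mathbb{R}}E_{\mu_1}(s\star u)$ for every $u\in S_{a,r}$ (both maxima being finite and attained, by Lemma \ref{lem3.4}). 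Taking the infimum over $u\in S_{a,r}$ and invoking Lemma \ref{lem6.7} yields
$$\sigma(a,\mu_2)=\inf_{u\in S_{a,r}}\max_{s\in\mathbb{R}}E_{\mu_2}(s\star u)\leq\inf_{u\in S_{a,r}}\max_{s\in\mathbb{R}}E_{\mu_1}(s\star u)=\sigma(a,\mu_1).$$

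The identical comparison with the pair $(0,\mu_1)$ in place of $(\mu_1,\mu_2)$, again using the $\mu=0$ characterization of Lemma \ref{lem6.7} for the upper bound, gives $\sigma(a,\mu_1)\leq\sigma(a,0)$, and the desired chain $\sigma(a,\mu_2)\leq\sigma(a,\mu_1)\leq\sigma(a,0)$ follows. There is no genuine obstacle here: the whole argument rests on the pointwise monotonicity of $\mu\mapsto E_{\mu}$ together with the minimax representation, the only point requiring a moment's care being the verification that the smallness condition is inherited by the smaller parameter, which is immediate from $p\gamma_p>2$.
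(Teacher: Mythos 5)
Your proof is correct and follows essentially the same route as the paper: both deduce the monotonicity from the minimax characterization of Lemma \ref{lem6.7} combined with the pointwise inequality $E_{\mu_2}(w)\leq E_{\mu_1}(w)\leq E_{0}(w)$. The only cosmetic difference is that the paper tests the inf-max with the particular critical points $\hat{u}_{\mu_1}$ and $u_0$ whereas you compare the inf-max levels directly; your explicit verification that the smallness condition is inherited by $\mu_1$ (since $p\gamma_p>2$) is a point the paper leaves implicit.
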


\begin{proof}
We only prove the case of $0<\mu_{1}<\mu_{2}$. By Lemma \ref{lem6.7}
$$
\sigma\left(a, \mu_{2}\right) \leq \max _{s \in \mathbb{R}} E_{\mu_{2}}\left(s \star \hat{u}_{\mu_{1}}\right) \leq \max _{s \in \mathbb{R}} E_{\mu_{1}}\left(s \star \hat{u}_{\mu_{1}}\right)=E_{\mu_{1}}\left(\hat{u}_{\mu_{1}}\right)=\sigma\left(a, \mu_{1}\right)
$$
and
$$
\sigma\left(a, \mu_{1}\right) \leq \max _{s \in \mathbb{R}} E_{\mu_{1}}\left(s \star {u}_{0}\right) \leq \max _{s \in \mathbb{R}} E_{0}\left(s \star {u}_{0}\right)=E_{0}\left({u}_{0}\right)=\sigma(a,0).
$$\\
\end{proof}

\noindent \textbf{Proof of Theorem \ref{th1.1}-(4): convergence of $\tilde{u}_{\mu}$.}

For $a>0$ fixed, we deduce from Lemma \ref{lem3.1} that $R_{0}(a, \mu) \rightarrow 0$ as $\mu \rightarrow 0^{+}$, and hence $||\Delta \tilde{u}_{\mu}||_{2}<R_{0}(a, \mu) \rightarrow 0$ as well. Moreover
$$
0>m_r(a, \mu)=E_{\mu}\left(\tilde{u}_{\mu}\right) \geq \frac{1}{2}{||\Delta \tilde{u}_{\mu} ||}_2^2-\frac{\mu a}{2}{||\Delta \tilde{u}_{\mu} ||}_2-\frac{C_{N,p}^p}{p}a^{p(1-\gamma_p)}{||\Delta \tilde{u}_{\mu}||}_2^{p\gamma_p} \rightarrow 0,
$$
which implies that $m_r(a, \mu)\to 0$. From Lemma \ref{lem4.2}, we have $\tilde{\lambda} \to 0^-$ as $\mu \rightarrow 0^{+}$. \qed   \\

\noindent \textbf{Proof of Theorem \ref{th1.1}-(5): convergence of ${\hat{u}_{\mu}}$.}

Consider $\left\{\hat{u}_{\mu} : 0<\mu<\overline{\mu}\right\}$ with $\overline{\mu}$ small enough. Since $\hat{u}_{\mu} \in \mathcal{P}_{a, \mu}$, from Lemma \ref{lem6.8}, we have
\begin{equation}
\begin{aligned}
 \sigma(a,0) & \geq \sigma\left(a, \mu\right) =E_{\mu}\left({\hat{u}_{\mu}} \right)=\left(\frac{1}{2}-\frac{1}{p\gamma_{p} }\right)||\Delta {\hat{u}_{\mu}}||_{2}^{2}-\frac{\mu}{2}
\left(1-\frac{1}{p\gamma_{p}}\right)||\nabla {\hat{u}_{\mu}}||_{2}^{2} \\
& \geq \left(\frac{1}{2}-\frac{1}{p\gamma_{p} }\right)||\Delta {\hat{u}_{\mu}}||_{2}^{2}-\frac{\mu a}{2}
\left(1-\frac{1}{p\gamma_{p}}\right)||\nabla {\hat{u}_{\mu}}||_{2}.
\end{aligned}
\end{equation}
This implies that $\left\{\hat{u}_{\mu}\right\}$ is bounded in $H^2$. Since each $\hat{u}_{\mu}$ is a real-valued radial function in $S_a$, we deduce that up to a subsequence $\hat{u}_{\mu} \rightharpoonup \hat{u}$ weakly in $H^{2}$,  strongly in $L^{r}$ for $2<r<4^*$ and a.e. in $\mathbb{R}^{N}$, as $\mu \rightarrow 0^{+}$. Using the fact that $\hat{u}_{\mu}$ solves
\begin{equation}\label{eq6.2}
{\Delta}^{2}\hat{u}_{\mu}+\mu \Delta \hat{u}_{\mu}-\hat{\lambda}_{\mu}\hat{u}_{\mu}
={|\hat{u}_{\mu}|}^{p-2}\hat{u}_{\mu}~~~~\text{in}~~~~ \R^{N}
\end{equation}
for $\hat{\lambda}_{\mu}<-\frac{\mu^2}{4}$ and $P_{\mu}\left(\hat{u}_{\mu}\right)=0$, we infer that $\hat{\lambda}_{\mu} a^{2}=-\frac{\mu}{2}||\nabla {\hat{u}_{\mu}}||_{2}^{2}+(\gamma_p-1)||{\hat{u}_{\mu}}||_{p}^{p}$ and hence also $\hat{\lambda}_{\mu}$ converges (up to a subsequence) to some $\hat{\lambda} \leq 0$, with
$$\hat{\lambda} a^{2}=(\gamma_p-1)||{\hat{u}}||_{p}^{p} .$$
Therefore, we have $\hat{\lambda}=0$ if only if the weak limit $\hat{u} \equiv 0$. We claim that $\hat{\lambda}<0$. In fact, $\hat{u}_{\mu} \rightharpoonup \hat{u}$ weakly in $H^{2}$ implies that $\hat{u}$ is a weak radial (and real) solution to
\begin{equation}\label{eq6.3}
{\Delta}^{2}\hat{u}-\hat{{\lambda}}\hat{u}={|\hat{u}|}^{p-2}\hat{u} ~~~~\text{in}~~~~{\R}^N,
\end{equation}
and in particular by the Pohozaev identity $||\Delta {\hat{u}}||_{2}^{2}=\gamma_{p}||\nabla {\hat{u}}||_{2}^{2}$. But then, using the boundedness of $\left\{\hat{u}_{\mu}\right\}$ and Lemma \ref{lem6.8}, we deduce that
\begin{align*}
E_{0}(\hat{u}) &=\frac{p\gamma_{p}-2}{2p}||\hat{u}||_{p}^{p}=\lim _{\mu \rightarrow 0^{+}}\left[\frac{p\gamma_{p}-2}{2p} ||\hat{u}_{\mu}||_{p}^{p}-\frac{\mu}{4}||\nabla \hat{u}_{\mu}||_{2}^{2}\right] \\ &=\lim _{\mu \rightarrow 0^{+}} E_{\mu}\left(\hat{u}_{\mu}\right)=\lim _{\mu \rightarrow 0^{+}} \sigma(a, \mu) \geq \sigma(a, \overline{\mu})>0,
\end{align*}
which implies that $\hat{u} \not \equiv 0$, and in turn yields $\hat{\lambda}<0$.
Test (\ref{eq6.2}) and (\ref{eq6.3}) with $\hat{u}_{\mu}-\hat{u}$, and subtract, we have
\begin{align*} ||\Delta\left(\hat{u}_{\mu}\!-\!\hat{u}\right)||_2^{2}\!-\!\mu
\int_{\mathbb{R}^{N}}
\nabla \hat{u}_{\mu} \nabla\left(\hat{u}_{\mu}\!-\!\hat{u}\right)
\!-\!\int_{\mathbb{R}^{N}}\left(\hat{\lambda}_{\mu} \hat{u}_{\mu}\!-\!\hat{\lambda} \hat{u}\right) \left(\hat{u}_{\mu}\!-\!\hat{u}\right)=\!o(1).
\end{align*}
It result to $ ||\Delta\left(\hat{u}_{\mu}-\hat{u}\right)||_2^{2}-\hat{\lambda}
||\left(\hat{u}_{\mu}-\hat{u}\right)||_2^{2}=o(1)$, that is $\hat{u}_{\mu} \rightarrow \hat{u}$ in $H^{2}$. Moreover, we have $\sigma(a, 0)\!\leq\!E_{0}(\hat{u})=\mathop {\lim }\limits_{\mu \rightarrow 0^{+}}\sigma(a, \mu)\!\leq\!\sigma(a, 0)$. Consequently, $E_{0}(\hat{u})\!=\!\mathop {\lim }\limits_{\mu  \to 0^{+}}\sigma(a, \mu)\!=\!\sigma(a, 0)$ and $\hat{u}$ is a radial ground state to (\ref{eq6.3}).\qed \\

\noindent \textbf{Proof of Theorem \ref{th1.3}:}

The proof was motivated by Theorem 1.3 in \cite{Nbal} which dealing with $p\in(2,\bar{p})$, we give out the details since we consider $p\in(\bar{p},4^*)$. Let $a_k \to 0^+$ as $k\to+\infty$ and $u_k\in A_{R_0}^r$ be a minimizer of $m_r(a_k,\mu)$ for each $k\in \mathbb{N}$, where $A_{R_0}^r :=\left\{u \in S_{a_k}\cap H_{{rad}}^{2} : {||\Delta u||}_2<R_0(a_k,\mu) \right\}$. To begin with, we show that $\{\frac{\left\|\Delta u_{k}\right\|_{2}}{\left\|u_{k}\right\|_{2}}\}$ is bounded and $\frac{\left\|u_{k}\right\|_{p}^{p}}{\left\|u_{k}\right\|_{2}^{2}}\to0$ as $k\to+\infty$. Indeed, we derive from the Pohozaev identity $P_{\mu}(u_k)=0$ that
$$\left(\frac{1}{2}-\frac{1}{p\gamma_{p} }\right)||\Delta u_{k}||_{2}^{2}-\frac{\mu}{2}
\left(1-\frac{1}{p\gamma_{p}}\right)||\nabla u_{k}||_{2}^{2}=E_{\mu}\left(u_{k}\right)<-\frac{{a_k^2\mu}^2}{8}<0,$$
and hence $\left(\frac{1}{2}-\frac{1}{p\gamma_{p} }\right)\Big(\frac{||\Delta u_{k}||_{2}}{||u_{k}||_{2}}\Big)^2\leq \frac{\mu}{2}
\left(1-\frac{1}{p\gamma_{p}}\right) \frac{||\Delta u_{k}||_{2}}{||u_{k}||_{2}}$ implies $\frac{||\Delta u_{k}||_{2}}{||u_{k}||_{2}}\leq\frac{(p\gamma_{p}-1)\mu}{p\gamma_{p}-2}$. By the Gagliardo-Nirenberg inequality \eqref{2.9}, we have
$$\frac{\left\|u_{k}\right\|_{p}^{p}}{\left\|u_{k}\right\|_{2}^{2}} \!\leq \! C_{N,p}^p \Big(\frac{||\Delta u_{k}||_{2}}{||u_{k}||_{2}}\Big)^{p\gamma_p} {||u_k||}_2^{p\!-\!2} \!\leq \! C_{N,p}^p \Big[ \frac{(p\gamma_{p}\!-\!1)\mu}{p\gamma_{p}\!-\!2} \Big]^{p\gamma_p} a_k^{p\!-\!2}\to 0, ~~~~~~~~\text{as}~~~~ k \rightarrow +\infty. $$
From Lemma 3.1 in \cite{Nbal}, we know that $\inf_{u \in S_{a_k}}
\Big\{ \frac{1}{2}{||\Delta u||}_2^2\!-\!\frac{\mu}{2}{||\nabla u||}_2^2 \Big\}\!=-\frac{a_k^2\mu^2}{8}$. We also have $m_r(a_k,\mu)<-\frac{{a_k^2\mu}^2}{8}$ by Lemma \ref{lema3.4.1}. These facts imply that
\begin{align*}
-\frac{{a_k^2\mu}^2}{8}\!&>\!m_r(a_k,\mu)=\!E_{\mu}(u_k)\!= \frac{1}{2}{||\Delta u_k||}_2^2\!-\!\frac{\mu}{2}{||\nabla u_k||}_2^2 \!-\!\frac{1}{p}\left\|u_{k}\right\|_{p}^{p}\\
&\geq \! \inf _{u \in A_{R_{0}^r}} \Big\{ \frac{1}{2}{||\Delta u||}_2^2\!-\!\frac{\mu}{2}{||\nabla u||}_2^2 \Big\}\!-\!\frac{1}{p}\frac{\left\|u_{k}\right\|_{p}^{p}}
{\left\|u_{k}\right\|_{2}^{2}}a_k^2 \geq -\frac{{a_k^2\mu}^2}{8}-\!\frac{1}{p}\frac{\left\|u_{k}\right\|_{p}^{p}}
{\left\|u_{k}\right\|_{2}^{2}}a_k^2,
\end{align*}
whence Theorem \ref{th1.3}-$\textbf{(1)}$ follows.

Next, we prove Theorem \ref{th1.3}-$\textbf{(2)},\textbf{(3)}$. On the one hand, Theorem \ref{th1.1}-(3) implies that ${\tilde{\lambda}}_k<-\frac{{\mu}^2}{4}$. On the other hand, since $u_k$ solves \eqref{eq1.1}-\eqref{eq1.2}, we have
\begin{align*}
{\tilde{\lambda}}_k a_k^{2}\!=\!||\Delta u_{k}||_{2}^{2}\!-\!\mu ||\nabla u_{k}||_{2}^{2}\!-\!||u_{k}||_{p}^{p} \!\geq\! \inf _{u \in A_{R_{0}^r}} \Big\{ {||\Delta u||}_2^2\!-\!\mu{||\nabla u||}_2^2 \Big\}\!-\!||u_{k}||_{p}^{p}\!\geq\!-\frac{{a_k^2\mu}^2}{4}\!
-\!\frac{\left\|u_{k}\right\|_{p}^{p}}
{\left\|u_{k}\right\|_{2}^{2}}a_k^2,
\end{align*}
and so we get Theorem \ref{th1.3}-$\textbf{(2)}$. From Theorem \ref{th1.3}-$\textbf{(1)}$, we know that $m_r(a_k,\mu)\!=\!E_{\mu}(u_k)\!=\!-\frac{{a_k^2\mu}^2}{8}\!+\!o_k(1)$. Therefore, we have
\begin{align} \label{aln5.8}
-\frac{{\mu}^2}{8}\!+\!o_k(1)\!=\!
\frac{E_{\mu}(u_k)}{\left\|u_{k}\right\|_{2}^{2}}
\!=\!
\frac{1}{2}\frac{{||\Delta u_k||}_2^2}{\left\|u_{k}\right\|_{2}^{2}}\!-\!\frac{\mu}{2}\frac{{||\nabla u_k||}_2^2}{\left\|u_{k}\right\|_{2}^{2}}\!+\!o_k(1).
\end{align}
Recall that $P_{\mu}(u_k)=0$, it results to
\begin{align} \label{aln5.9}
0\!=\!\frac{P_{\mu}(u_k)}{\left\|u_{k}\right\|_{2}^{2}}\!=\!2\frac{{||\Delta u_k||}_2^2}{\left\|u_{k}\right\|_{2}^{2}}\!-\!\mu\frac{{||\nabla u_k||}_2^2}{\left\|u_{k}\right\|_{2}^{2}}\!-\!2{\gamma_p}
\frac{{||u_k||}_p^p}{\left\|u_{k}\right\|_{2}^{2}}
~~~~~~~~\Longrightarrow 2\frac{{||\Delta u_k||}_2^2}{\left\|u_{k}\right\|_{2}^{2}}\!-\!\mu\frac{{||\nabla u_k||}_2^2}{\left\|u_{k}\right\|_{2}^{2}}\!\to\!0.
\end{align}
Consequently, Theorem \ref{th1.3}-$\textbf{(3)}$ follows from \eqref{aln5.8}-\eqref{aln5.9}.

It remains to prove Theorem \ref{th1.3}-$\textbf{(4)}$. Let $v_k=\frac{u_k}{||u_k||_{2}}$, since $u_k$ solves \eqref{eq1.1}-\eqref{eq1.2}, we have
\begin{align*}
{\tilde{\lambda}}_k ||v_{k}||_{2}^{2}  \!=\!||\Delta v_{k}||_{2}^{2}\!-\!\mu ||\nabla v_{k}||_{2}^{2}\!-\!\frac{||u_{k}||_{p}^{p}}{||u_{k}||_{2}^{2}}\!=\!||\Delta v_{k}||_{2}^{2}\!-\!\mu ||\nabla v_{k}||_{2}^{2}+o_k(1).
\end{align*}
Let $\mathcal{F}v_k$ be the Fourier transform of $v_k$. By the fact that ${\tilde{\lambda}}_k\to-\frac{{\mu}^2}{4}$ and the Plancherel's formula, we know that
\begin{align*}
o_k(1)=||\Delta v_{k}||_{2}^{2}\!-\!\mu ||\nabla v_{k}||_{2}^{2}\!+\!\frac{{\mu}^2}{4} ||v_{k}||_{2}^{2}=\frac{1}{(2 \pi)^{N}}\int_{\mathbb{R}^{N}}\left(|\xi|^{2}-\frac{\mu}{2 }\right)^{2}\left|(\mathcal{F}v_k)(\xi)\right|^{2} d \xi,
\end{align*}
it results that $\int_{\mathbb{R}^{N}}\left(|\xi|^{2}-\frac{\mu}{2 }\right)^{2}\left|(\mathcal{F}v_k)(\xi)\right|^{2} d \xi \rightarrow 0$ and $\frac{1}{2}||\Delta v_{k}||_{2}^{2}\!-\!\frac{\mu}{2}||\nabla v_{k}||_{2}^{2} \to -\frac{{\mu}^2}{8}$ as $k \rightarrow +\infty$. Once again, by Lemma 3.1 in \cite{Nbal}, we have
$v_k\to 0$ in $L^q({\R}^N)$ for any $q\in(2,4^*)$. \qed \\

\vspace{.55cm}
\noindent {\bf Acknowledgements:} The authors would like to thank Professor Louis Jeanjean whose comments on the first version of this work have permitted us to improve our manuscript and to avoid including a wrong proof.


\begin{thebibliography}{99999}

\bibitem{MBHK}
M. B. Artzi, H. Koch, J. C. Saut, Dispersion estimates for fourth order Schr\"{o}dinger equations, C. R. Math. Acad. Sci. Paris S\'er. I 330 (2000), 87-92.

\bibitem{TBns}
T. Bartsch, N. Soave, Multiple normalized solutions for a competing system
of Schr\"{o}dinger equations, Calc. Var., 58 (2019) 1-24.


\bibitem{DBon}
D. Bonheure, J.-B. Casteras, E. Moreira Dos Santos, R. Nascimento, Orbitally stable standing waves of a mixed dispersion nonlinear Schr\"{o}dinger equation, SIAM J. Math. Anal. 50 (2018) 5027-5071.

\bibitem{DCas}
D. Bonheure, J.-B. Casteras, R. Mandel, On a fourth-order nonlinear Helmholtz equation, J. Lond. Math. Soc. 99 (3) (2019) 831-852.

\bibitem{iniL}
J. Bellazzini, L. Jeanjean, On dipolar quantum gases in the unstable regime, SIAM J. Math. Anal. 48 (3) (2016) 2028-2058.

\bibitem{Nbal}
N. Boussaid, A. J. Fernandez, L. Jeanjean, Some remarks on a minimization problem associated to a fourth order nonlinear Scrh\"{o}dinger equation, Preprint, arXiv:1910.13177.

\bibitem{dbJB}
D. Bonheure, J. B. Casteras, T. Gou, L. Jeanjean, Normalized solutions to the mixed dispersion nonlinear Schr\"{o}dinger equation in the mass critical and supercritical regime, Trans. Amer. Math. Soc. 372 (2019) 2167-2212.

\bibitem{DboN}
D. Bonheure, J.-B. Casteras, T. Gou, L. Jeanjean, Strong Instability of Ground States to a Fourth Order Schr\"{o}dinger Equation, Int. Math. Res. Not. IMRN, 17 (2019) 5299-5315.


\bibitem{TbOE}
T. Boulenger, E. Lenzmann, Blowup for biharmonic NLS, Ann. Sci. \'Ec. Norm. Sup\'er. 50 (2017) 503-544.


\bibitem{a7}
H. Br\'{e}zis, E. Lieb, A relation between pointwise convergence of functions and convergence of functionals, Proc. Amer. Math. Soc 88 (1983) 486-490.

\bibitem{BBUf}
B. Buffoni, Infinitely many large amplitude homoclinic orbits for a class of autonomous Hamiltonian systems, J. Differ. Equ. 121 (1995) 109-120.


\bibitem{BbUf}
B. Buffoni, Periodic and homoclinic orbits for Lorentz-Lagrangian systems via variational methods, Nonlinear Anal. 26 (1996) 443-462.



\bibitem{TcAz}
T. Cazenave, P. L. Lions, Orbital stability of standing waves for some nonlinear Schr\"{o}dinger equations, Comm. Math. Phys. 85 (1982) 549-561.

\bibitem{NaVe}
T. Cazenave, Semilinear Schr\"odinger equations, volume 10 of Courant Lecture Notes in Mathematics. New York University, Courant Institute of Mathematical Sciences, New York; Amer. Math. Soc., Providence, RI, 2003.

\bibitem{QuOz}
G. Evequoz, T. Weth, Dual variational methods and nonvanishing for the nonlinear Helmholtz equation, Adv. Math., 280 (2015), 690-728.

\bibitem{GfbI}
G. Fibich, B. Ilan, G. Papanicolaou, Self-focusing with fourth-order dispersion, SIAM J. Appl. Math. 62 (2002) 1437-1462.

\bibitem{LgOs}
L. Grafakos. Classical Fourier analysis, volume 249 of Graduate Texts in Mathematics. Springer, New York, second edition, 2008.



\bibitem{GN}
N. Ghoussoub, Duality and perturbation methods in critical point theory, volume 107 of Cambridge Tracts in Mathematics. Cambridge University Press, Cambridge, 1993. With appendices by David Robinson.

\bibitem{Qguo}
Q. Guo, Scattering for the focusing $L^2$-supercritical and $\dot{H}^2$ -subcritical biharmonic NLS equations, Comm. Partial Differ. Equ. 41 (2016) 185-207.

\bibitem{LjEa}
L. Jeanjean, Existence of solutions with prescribed norm for semilinear elliptic equations. Nonlinear Anal. 28 (1997) 1633-1659.


\bibitem{VIKA}
V. I. Karpman, Stabilization of soliton instabilities by higher-order dispersion: fourth order nonlinear Schr\"{o}dinger-type equations, Phys. Rev. E 53 (1996) 1336-1339.

\bibitem{ViKv}
V. I. Karpman, A. G. Shagalov, Stability of solitons described by nonlinear Schr\"{o}dinger-type equations with higher-order dispersion. Phys. D 144 (2000) 194-210.


\bibitem{Cken}
C. Kenig, G. Ponce, L. Vega, Oscillatory integrals and regularity of dispersive equations. J. Indiana Univ. Math. 40 (1991) 33-69.


\bibitem{ewLa}
E. W. Laedke, K. H. Spatschek, L. Stenflo, Evolution theorem for a class of perturbed envelope soliton solutions. J. Math. Phys. 24 (1983) 2764-2769.

\bibitem{EWlK}
E. W. Laedke, K. H. Spatschek, Stability properties of multidimensional finite-amplitude solitions. Phys. Rev. A, 30 (1984) 3279-3288.



\bibitem{TJLu}
T. J. Luo, S. J. Zheng, S. H. Zhu, Orbital stability of standing waves for a
fourth-order nonlinear schr\"{o}dinger equation with the mixed dispersions, Preprint, arXiv: 1904.02540.


\bibitem{CmGl}
C. Miao, G. Xu, and L. Zhao, Global well-posedness and scattering for the focusing energycritical nonlinear Schr\"{o}dinger equations of fourth order in the radial case, J. Differ.
Equ. 246 (2009) 3715-3749.


\bibitem{Bpau}
B. Pausader, Global well-posedness for energy critical fourth-order Schr\"{o}dinger equations in the radial case, Dyn. Partial Differ. Equ. 4 (2007) 197-225.

\bibitem{BPau}
B. Pausader, The cubic fourth-order Schr\"{o}dinger equation, J. Funct. Anal. 256 (2009) 2473-2517.

\bibitem{BPAu}
B. Pausader, The focusing energy-critical fourth-order Schr\"{o}dinger equation with radial data, Discrete Contin. Dyn. Syst. 24 (2009) 1275-1292.

\bibitem{BPAU}
B. Pausader, S. Shao, The mass-critical fourth-order Schr\"{o}dinger equation in high dimensions, J. Hyperbolic Differ. Equ. 7 (2010) 651-705.

\bibitem{BpSx}
B. Pausader, S. Xia, Scattering theory for the fourth-order Schr\"{o}dinger equation in low dimensions, Nonlinearity 26 (2013) 2175-2191.

\bibitem{CSul}
C. Sulem, P. L. Sulem, The Nonlinear Schr\"{o}dinger Equation: Self-focusing and Wave Collapse, volume 139 of Applied
Mathematical Sciences. Springer, 1 edition, 1999.


\bibitem{NSoa}
N. Soave, Normalized ground state for the NLS equations with combined nonlinearities, Preprint, arXiv:1811.00826.

\bibitem{nsoa}
N. Soave, Normalized ground states for the NLS equation with combined nonlinearities: the Sobolev critical case, Preprint, arXiv:1901.02003.

\bibitem{Mwlm}
M. Willem. Minimax Theorems. Progr. Nonlinear Differential Equations Appl., vol.24, Birkh\"{a}user Boston, Inc., Boston, MA, 1996.

\bibitem{MiWe}
M. I. Weinstein, Nonlinear Schr\"{o}dinger Equations and Sharp Interpolation Estimates. Commum. Math. Phys. 87 (1983) 567-576.

\bibitem{ShZj}
S. H. Zhu, J. Zhang, H. Yang, Limiting profile of the blow-up solutions for the fourth-order nonlinear Schr\"{o}dinger equation, Dyn. Partial Differ. Equ. 7 (2010) 187-205.



\end{thebibliography}
\end{document}